\theoremstyle{plain}
\theoremstyle{definition}
\newtheorem{theorem}{Theorem}[section]
\newtheorem{lemma}[theorem]{Lemma}
\newtheorem{proposition}[theorem]{Proposition}
\newtheorem{corollary}[theorem]{Corollary}
\newtheorem{claim}[theorem]{Claim}
\newtheorem{example}[theorem]{Example}
\newtheorem{remark}[theorem]{Remark}
\def\Cbb{\mathbb{C}}\def\Dbb{\mathbb{D}}\def\Ebb{\mathbb{E}}\def\Zbb{\mathbb{Z}}
\def\Sfrak{\mathfrak{S}}
\def\ov{\overline}\def\pr{\prime}\def\ra{\rightarrow}\def\setm{\setminus}\def\sqci{\raisebox{1.0pt}\fullmoon\square}\def\un{\underline}
\DeclareMathOperator{\wt}{wt}
\begin{document}

\title{Roots of descent polynomials and an algebraic inequality on hook lengths}
\author{Pakawut Jiradilok and Thomas McConville}
\date{\today}

\address{Department of Mathematics, Massachusetts Institute of Technology, Cambridge, Massachusetts}
\email[P.~Jiradilok]{pakawut@mit.edu}

\address{Department of Mathematics, Kennesaw State University, Marietta, Georgia}
\email[T.~McConville]{tmcconvi@kennesaw.edu}

\begin{abstract}
We prove a conjecture by Diaz-Lopez et al. that bounds the roots of descent polynomials. To do so, we prove an algebraic inequality, which we refer to as the ``Slice and Push Inequality.'' This inequality compares expressions that come from Naruse's hook-length formula for the number of standard Young tableaux of a skew shape.
\end{abstract}

\maketitle

\section{Introduction}

In 2014, Naruse \cite{naruse2014schubert} announced a remarkable formula for $f^{\lambda/\mu}$, the number of standard Young tableaux of skew shape $\lambda/\mu$. Later known as {\em Naruse's (hook-length) formula} in the literature, the formula expresses $f^{\lambda/\mu}$ as a sum over combinatorial objects called {\em excited (Young) diagrams}. In the context of equivariant Schubert calculus, Ikeda and Naruse \cite{ikeda2009excited} introduced these excited diagrams a few years before Naruse's discovery of the skew-shape hook-length formula.

Since the inception of the celebrated hook-length formula by Frame-Robinson-Thrall \cite{frame1954hook} in 1954, many have studied, re-proved, and generalized the formula. In the same manner, since Naruse's discovery, many combinatorialists have been investigating Naruse's formula in recent years. Notably, Morales, Pak, and Panova have developed a series of papers studying the formula, in which new proofs, $q$-analogues, and many new properties of Naruse's formula have been presented; see e.g. \cite{morales2015hook}. Konvalinka \cite{konvalinka2017bijective} has also given a bijective proof of Naruse's formula. It is worth mentioning that, in addition to the hook-length formula for skew straight shapes, Naruse \cite{naruse2014schubert} also announced formulae for skew shifted shapes (types B and D), and for these, Konvalinka \cite{konvalinka2018hook} gave bijective proofs as well.

For us, one of the main advantageous attributes of Naruse's hook-length formula is that it is cancellation-free. In \cite{morales.pak.panova:2018asymptotics}, Morales, Pak, and Panova have exploited this positive sum property of Naruse's formula to establish intriguing asymptotic bounds on the number of standard Young tableaux of skew shapes. Combining the variational principle and Naruse's formula, Morales, Pak, and Tassy \cite{morales.pak.tassy:2018asymptotics2} prove fascinatingly precise limiting behaviors of the number of standard Young tableaux of shew shapes, proving and generalizing conjectures in \cite{morales.pak.panova:2018asymptotics}. From this point of view, Naruse's formula provides an efficient tool to develop algebraic inequalities related to combinatorial objects.

In this note, we present combinatorial objects which we call ``$\sqci$-diagrams.'' These objects are closely related to the excited diagrams in Naruse's hook-length formula. By exploiting the cancellation-free property of Naruse's formula, we introduce and prove the Slice and Push Inequality, which is an algebraic inequality on $\sqci$-diagrams. Using the Slice and Push Inequality, we provide bounds on the roots of descent polynomials, thus proving a conjecture of Diaz-Lopez, Harris, Insko, Omar, and Sagan \cite{diaz2017descent}, which we recall below.

Let $\Sfrak_n$ be the set of permutations of $[n]:=\{1,\ldots,n\}$. For a permutation $\pi=\pi_1\cdots\pi_n$, the \emph{descent set} of $\pi$ is the set of positions $i\in[n-1]$ such that $\pi_i>\pi_{i+1}$. Given a finite set of positive integers $I\subseteq\Zbb_{>0}$, the \emph{descent polynomial} $d_I(z)$ is the unique polynomial such that $d_I(n)$ is the number of elements of $\Sfrak_n$ whose descent set is $I$ (assuming $n>\max(I \cup \{0\})$). MacMahon introduced the descent polynomial in \cite{macmahon2001combinatory}, where polynomiality of this function was proved by an inclusion-exclusion argument. One can also deduce this from Naruse's formula, as we show in Section~\ref{subsec:ribbons}.

Using a recurrence for descent polynomials, \cite{diaz2017descent} proved that $d_I(z)$ is a degree $m=\max(I\cup\{0\})$ polynomial with $d_I(i)=0$ for all $i\in I$. We let $|z|,\ \Re(z),$ and $\Im(z)$ denote the complex modulus, the real part, and the imaginary part of $z$, respectively. Diaz-Lopez et al. \cite{diaz2017descent} conjectured the following bounds on the roots of the descent polynomial.

\begin{theorem}[\cite{diaz2017descent}, Conjecture 4.3]\label{conj:main}
  If $z_0$ is a complex number such that $d_I(z_0)=0$, then
  \begin{enumerate}
  \item\label{conj:main1} $|z_0|\leq m$, and
  \item\label{conj:main2} $\Re(z_0)\geq -1$.
  \end{enumerate}
\end{theorem}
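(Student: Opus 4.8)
The plan is to realize the descent polynomial $d_I(z)$ as essentially counting standard Young tableaux of a ribbon (skew) shape, so that Naruse's hook-length formula applies. If $I = \{i_1 < i_2 < \cdots < i_k\}$ with $m = i_k$, then permutations of $[n]$ with descent set exactly $I$ are in bijection (via inclusion-exclusion on descent sets, or directly) with linear extensions of a zigzag poset, and $d_I(n)$ equals $f^{\lambda/\mu}$ for a ribbon shape $\lambda/\mu$ depending on $I$ and $n$, up to normalization by a product of factorials; this is the content promised in Section~\ref{subsec:ribbons}. The key point is that as $n$ varies, the ``outer'' part of the ribbon grows in a controlled way, and Naruse's formula writes $d_I(z)$ (after clearing denominators) as a positive sum over excited diagrams of products of linear factors in $z$, each factor of the form $(z - a)$ or $(z+b)$ with $0 \le a \le m$ and $b \ge 1$ — these being hook lengths of boxes in the skew shape. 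Writing $d_I(z) = \sum_D \prod_{\square \in D} \ell_\square(z)$ over this positive family, each summand is a polynomial all of whose roots satisfy both $|z| \le m$ and $\Re(z) \ge -1$; the problem is that a positive sum of such polynomials need not inherit root bounds of the individual terms.

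This is where the Slice and Push Inequality enters, and it is the main obstacle. The naive approach — bounding $|d_I(z)|$ below by a triangle-inequality argument on the positive sum — fails because the linear factors $(z-a)$ for different $a$ point in different directions in the complex plane, so there is no uniform sign. Instead I expect the argument to proceed by comparison: to show $|z_0| \le m$, suppose $|z_0| > m$ and compare $d_I(z_0)$ with a ``pushed'' version in which the excited diagrams are rearranged so that the relevant products become comparable in modulus, using the Slice and Push Inequality to show the comparison is strict and hence $d_I(z_0) \ne 0$. Concretely, one partitions the excited diagrams into slices according to some statistic (e.g., the position of a box in a fixed row or column), and within each slice pushes boxes to extremal positions; the inequality asserts that this push does not increase (or does not decrease, in the appropriate normalization) the associated hook-length product, and the resulting extremal configuration is transparent enough to bound directly. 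For the real-part bound $\Re(z_0) \ge -1$, I would run a parallel argument: the obstruction to a root with $\Re(z_0) < -1$ is that every linear factor $(z_0 + b)$ with $b \ge 1$ then has $\Re(z_0 + b) < b - 1$, and more importantly the factors $(z_0 - a)$ with $a \ge 0$ all have strictly negative real part, so after grouping terms by the Slice and Push decomposition one can exhibit a half-plane containing all the summands $\prod \ell_\square(z_0)$, forcing their sum to be nonzero.

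In more detail, the steps in order would be: (1) set up the ribbon/skew-shape correspondence and derive the Naruse-formula expansion of $d_I(z)$ as a positive sum of products of linear hook-length factors, identifying precisely which values $a \in \{0, 1, \ldots, m\}$ and $b \in \Zbb_{>0}$ can occur; (2) state and prove the Slice and Push Inequality in the abstract $\sqci$-diagram language, the technical heart of the paper, via a slicing of diagrams followed by a monotone pushing operation and an induction on the number of ``movable'' boxes; (3) deduce from the inequality a structural statement — roughly, that $d_I(z)$ can be written as a sum of terms each lying in a common sector or half-plane of $\Cbb$ whenever $|z| > m$ or $\Re(z) < -1$ — and conclude that $d_I$ has no such roots; (4) handle edge cases ($I = \emptyset$, small $m$, repeated factors) directly. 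The step I expect to fight hardest with is (2): getting the pushing operation to be genuinely monotone on the hook-length products requires a delicate inequality between products of the form $\prod (x_i + c_i)$ before and after a rearrangement of the $c_i$, and the cancellation-free nature of Naruse's formula is exactly what makes this tractable — but identifying the right statistic to slice along, so that each slice's extremal configuration is both computable and sharp, is the crux.
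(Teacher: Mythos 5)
Your steps (1) and (2) match the paper: the ribbon/Naruse translation is exactly Section~\ref{subsec:ribbons}, and the Slice and Push Inequality is indeed the technical heart. But your step (3) --- deducing the root bounds by showing the excited-diagram summands all lie in a common half-plane or sector when $|z|>m$ or $\Re(z)<-1$ --- is where the argument breaks, and it is not what the paper does. After the shift $t=z-m$, the first-row hook factors are $t+\alpha_i$ with $\alpha_i\le m$, and different excited diagrams contribute products with \emph{different numbers} of these factors (anywhere from $0$ to $s$ of them). When $\Re(t)<-\alpha_1$, each such factor has argument near $\pi$, so a summand with an odd number of first-row factors points roughly opposite to one with an even number; there is no common half-plane, and no grouping by a "slice" statistic fixes this, because the imbalance is in the degrees of the summands, not their positions.

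What the paper actually does after Slice and Push is quantitative, not geometric. It factors $p(t)=C\cdot E(t)\cdot\prod(t+\beta)$ (Lemma~\ref{lem:poly_factorization}), so the integer roots come from the trivial part and everything reduces to the excitation factor $E(t)=\sum_d C_{s-d}\prod_{i=1}^d(t+\alpha_i)$ in a Newton basis. The Slice and Push Inequality is used to prove the coefficient growth bound $C_0/0!\ge C_1/1!\ge\cdots\ge C_s/s!$ (Proposition~\ref{prop:ineq_coeff}), i.e.\ $C_{i+1}/C_i\le i+1$. That bound is then fed into two separate analytic results: a lower bound $|((z+1)\cdots(z+k)-k!)/z|\ge(k-1)!$ outside the disk $|z+(k+1)|\le k+1$ (Appendix~\ref{app:analytic}, proved via the maximum modulus principle and trigonometric estimates), which yields $|z+m|\le m$ for roots $z$ of $E$; and the Polynomial Perturbation Lemma (Appendix~\ref{app:perturbation}), which yields $|z+1|\le m$. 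These two disks translate back to $|z_0|\le m$ and $\Re(z_0)\ge -1$. So the missing content in your plan is precisely the Newton-basis coefficient inequality and the two analytic lemmas that convert it into disk bounds on the roots; without some substitute for those, identifying the right "slice statistic" will not rescue a half-plane argument.
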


It is known that these bounds are optimal, though our data suggest that the (convex) region bounded by these two inequalities is much larger than necessary for large $m$. Stronger inequalities in the special case where $I=\{m\}$ were proved in \cite[Theorem 4.7, Corollary 4.8]{diaz2017descent}. A plot of the roots of the descent polynomial $d_{\{10\}}$ is given in Figure~\ref{plot}. The shaded region is determined by the inequalities in Theorem~\ref{conj:main}.

\begin{figure}
\begin{tikzpicture}[scale = 0.5]
\filldraw[fill=cyan!50, draw = cyan!50] (0,0) circle (10);
\filldraw[fill = white, draw = white] (-10,-10) rectangle (-1,10);
\draw[step=1cm,gray,very thin] (-5,-11) grid (15,11);
\draw[thick,->] (0,0) -- (15,0);
\draw[thick,->] (0,0) -- (-5,0);
\draw[thick,->] (0,0) -- (0,11);
\draw[thick,->] (0,0) -- (0,-11);
\filldraw[fill=orange, draw = orange] (-1,0) circle (0.2);
\filldraw[fill=orange, draw = orange] (0.123556,-2.080159) circle (0.2);
\filldraw[fill=orange, draw = orange] (0.123556,2.080159) circle (0.2);
\filldraw[fill=orange, draw = orange] (2.852828,-3.469780) circle (0.2);
\filldraw[fill=orange, draw = orange] (2.852828,3.469780) circle (0.2);
\filldraw[fill=orange, draw = orange] (6.147172,-3.469780) circle (0.2);
\filldraw[fill=orange, draw = orange] (6.147172,3.469780) circle (0.2);
\filldraw[fill=orange, draw = orange] (8.876444,-2.080159) circle (0.2);
\filldraw[fill=orange, draw = orange] (8.876444,2.080159) circle (0.2);
\filldraw[fill=orange, draw = orange] (10,0) circle (0.2);

\node [rotate=90] at (-6, 0) {imaginary part};
\node at (4.5, -12) {real part};
\node at (18,0) {};
\end{tikzpicture}
\caption{\label{plot}A plot of the roots of $d_{\{10\}}(z)$}
\end{figure}
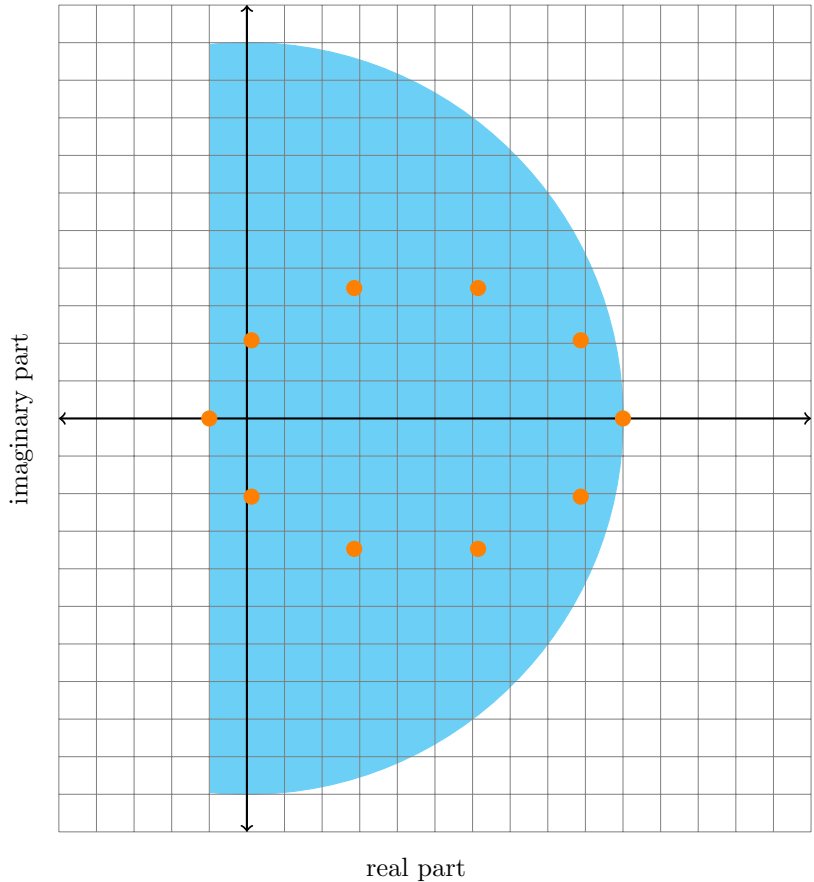

Our main result is a proof of Theorem~\ref{conj:main}. Our proof relies on a reinterpretation of descent polynomials as functions enumerating standard Young tableaux of a family of skew shapes known as \emph{ribbons}. Background on Naruse's formula and the connection to descent polynomials are given in Section~\ref{sec:naruse}. Using this translation, we replace the descent polynomial $d_I(z)$ with a polynomial $E(z)$ so that if $d_I(z_0)=0$ for some complex number $z_0$ then either $z_0\in I$ or $E(z_0-m)=0$. The proof of Theorem~\ref{conj:main} relies on Proposition~\ref{prop:ineq_coeff}, which is an inequality of the coefficients of $E(z)$ in a nonstandard basis for degree $\leq m$ polynomials. The proof of Proposition~\ref{prop:ineq_coeff} is given as a consequence of an algebraic inequality on the weights of $\sqci$-diagrams, defined in Section~\ref{sec:slice_push}. We refer to this inequality as the ``Slice and Push Inequality'' as it involves dividing and shifting cells of a Young diagram. The proof of Theorem~\ref{conj:main} is wrapped up in Section~\ref{sec:main_conj}, which relies on a couple of technical results on roots of polynomials proved in the appendices.

Independently, Bencs \cite[Theorem 5.2]{bencs2018some} discovered a separate proof of Theorem~\ref{conj:main}(\ref{conj:main1}) and proved Theorem~\ref{conj:main}(\ref{conj:main2}) for most choices of $I$; see \cite[Section 6]{bencs2018some}. His proofs rely on some inequalities satisfied by the coefficients of $d_I(z)$ in different bases for polynomials of degree $\leq m$ than the ones we consider.

Besides our main result -- a resolution of the conjecture of Diaz-Lopez et al. on the location of roots -- various exact bounds in this note are intriguing, and are subject of further investigation in their own right. Let us briefly describe the Slice and Push Inequality here. The excitation factor from Naruse's hook-length formula (cf. Section \ref{sec:naruse} for precise details) is a sum over all excited diagrams of the weight of each excited diagram. The weight of our $\sqci$-diagram is similar to the excitation factor from Naruse's formula. Instead of allowing each cell to be excited many times as in Naruse's excitation, our $\sqci$-diagram allows each $\raisebox{1.0pt}\fullmoon$-type cell to be excited {\em at most once}, and disallows any movement of the $\square$-type cell. Similar to Naruse's excitation factor, this $\sqci$ rule produces a collection of diagrams, and then we sum the weights of the diagrams over this collection to obtain the weight of a $\sqci$-diagram (cf. the beginning of Section \ref{sec:slice_push} for a precise description). The Slice and Push Inequality says that if we start with a $\sqci$-diagram, consider any vertical line between any two consecutive columns, push all the $\raisebox{1.0pt}\fullmoon$-type cells to the right of the line one unit to the right, change all the pushed cells to $\square$, and obtain a new $\sqci$-diagram, then the weight of the original diagram is greater than or equal to the weight of the new diagram. In the notation of Lemma \ref{lem:s_and_p}, this bound can be written concisely:
\[
\wt(D; F) \geq \wt(D|_{k};\ F\sqcup (|_{k}D)^{\ra}).
\]
See Figure \ref{fig:sap1} for a depiction of the Slice and Push Inequality. Also, see Example \ref{ex:sap_55432} for an explicit calculation.

Another intriguing result is the exact bound in Proposition \ref{prop:ineq_coeff}. It turns out that under the Newton basis coming from Naruse's formula, the coefficients $C_0, C_1, \ldots, C_s$ of the descent polynomial ``do not grow too quickly'' so that the sequence
\[
\left( \frac{C_0}{0!}, \, \frac{C_1}{1!}, \, \ldots \, , \, \frac{C_s}{s!} \right)
\]
is weakly decreasing. These numbers $C_0, C_1, \ldots, C_s$ are defined in Section \ref{sec:naruse}. Proposition \ref{prop:ineq_coeff} is an application of the Slice and Push Inequality, and the proposition is in turn a key ingredient in the proof of the conjecture of Diaz-Lopez et al.

Aside from the Slice and Push Inequality and the bound on the growth of $C_0, C_1, \ldots, C_s$, we present further interesting bounds in the two appendices. These results are less combinatorial, and more analytic. They are also ingredients in the proof of our main theorem. The reader can enjoy Appendices \ref{app:analytic} and \ref{app:perturbation} separately from the rest of the paper. We think the exact bounds we present there are, once again, intriguing, and we would be interested to see further applications of these inequalities to other combinatorial problems.

We remark that there is a related family of polynomials, the \emph{peak polynomials} $P_I(z)$, defined by the property that $2^{n-|I|-1}P_I(n)$ is the number of permutations of $\Sfrak_n$ with peak set $I$. Here, the \emph{peak set} of a permutation $\pi=\pi_1\cdots\pi_n$ is the set of positive integers $i\in\{2,\ldots,n-1\}$ such that $\pi_{i-1}<\pi_i>\pi_{i+1}$. Polynomiality of this function was proved in \cite{billey2013permutations}. It has been observed that peak polynomials and descent polynomials have many similar properties. In particular, \cite[Conjecture 4.3]{diaz2017descent} was motivated by a similar conjecture bounding the roots of peak polynomials \cite[Conjecture 1.6]{billey2016coefficients}. Supporting the connection, O\u{g}uz \cite{ouguz2018connecting} proved that descent polynomials can be expressed as a sum of peak polynomials, and conversely, each peak polynomial is an alternating sum of descent polynomials. Our approach to bounding the roots of descent polynomials does not seem to have a clear application to peak polynomials, however. In particular, we would be interested in finding a basis for polynomials of small degree for which the coefficients of the peak polynomial $P_I(z)$ satisfy the conditions of the Polynomial Perturbation Lemma in Appendix~\ref{app:perturbation}.

This paper is organized as follows. In Section \ref{sec:naruse}, we review Naruse's hook-length formula. We establish a connection between Naruse's formula and the descent polynomial. In Section \ref{sec:slice_push}, we investigate the Slice and Push Inequality. We also prove Proposition \ref{prop:ineq_coeff}. In Section \ref{sec:main_conj}, we give a proof of the conjecture of Diaz-Lopez, Harris, Insko, Omar, and Sagan. Some technical analytic lemmas used in the proof of the main conjecture are proved in Appendices \ref{app:analytic} and \ref{app:perturbation}.

\section{Naruse's formula}\label{sec:naruse}

\subsection{Excited diagrams}\label{subsec:excited}

A partition $\lambda=(\lambda_1\geq\lambda_2\geq \cdots)$ is a weakly decreasing sequence of nonnegative integers such that $\lim_{i \rightarrow \infty} \lambda_i = 0$. Its conjugate partition is denoted $\lambda^{\pr}=(\lambda_1^{\pr},\lambda_2^{\pr},\ldots)$. A \emph{Young diagram} $\Dbb(\lambda)$ for a partition $\lambda$ is a left-justified array of cells, with $\lambda_1$ cells in the first (top) row, $\lambda_2$ cells in the next row, and so on. For a diagram $\Dbb$, we let $c_{i,j}$ be the cell in the $i$-th row and the $j$-th column. If $\mu\subseteq\lambda$, we draw the \emph{skew Young diagram} for $\lambda/\mu$ by shading in the cells contained in $\mu$. When considering a fixed skew shape $\lambda/\mu$, we typically let $n$ be the \emph{size} of the shape; i.e.,
\[ n = |\lambda| - |\mu| = \sum_{i=1}^{\infty} (\lambda_i-\mu_i). \]
A \emph{standard Young tableau} of shape $\lambda/\mu$ is a bijective filling of the unshaded cells of $\lambda$ with numbers $\{1,\ldots,n\}$ such that values increase to the right along any row and going down along any column. The six standard tableaux of shape $(3,3,3)/(2,2)$ are shown in Figure~\ref{fig:tab} (left).

\begin{figure}
  
  \centering
  \includegraphics{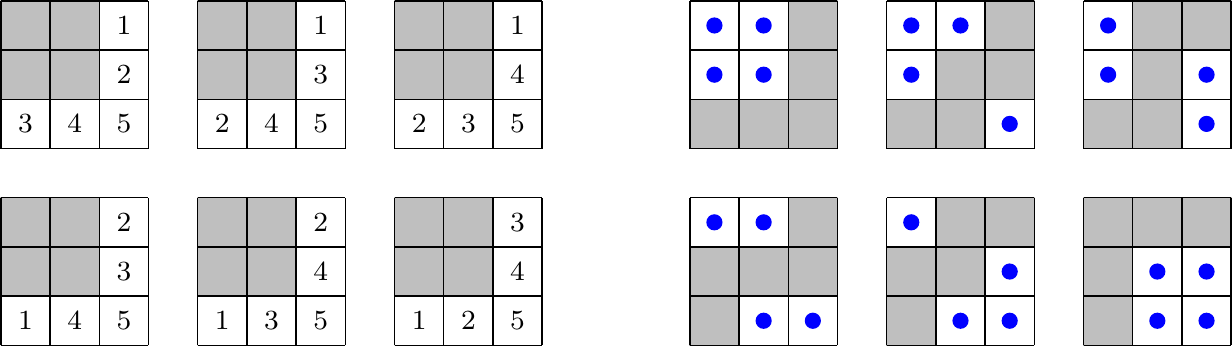}
  \caption{\label{fig:tab}(left) Standard tableaux  \hspace{3mm} (right) Excited diagrams}
  
\end{figure}

Let $f^{\lambda/\mu}$ be the number of standard Young tableaux of shape $\lambda/\mu$, setting $f^{\lambda/\mu}=f^{\lambda}$ if $\mu=\ov{0}:=(0,0,\ldots)$. For a cell $c\in\Dbb(\lambda)$, its \emph{hook length} $h(c)=h(c;\lambda)$ is the number of cells that lie in the same row, weakly to the right of $c$, or in the same column, strictly below $c$. The Frame-Robinson-Thrall ``hook length formula'' is a remarkable product formula for the number of standard Young tableaux of a (non-skew) shape:
\[ f^{\lambda} = |\lambda|!\prod_{c\in\Dbb(\lambda)}\frac{1}{h(c)}. \]
For some skew shapes $\lambda/\mu$, the number $f^{\lambda/\mu}$ has large prime factors, removing the possibility for such a simple product formula in general. We refer to the recent survey~\cite{adin2015standard} for a wide array of formulae for $f^{\lambda/\mu}$.

We use a formula recently discovered by Naruse~\cite{naruse2014schubert}, recalled in Theorem~\ref{thm:naruse}. Fix a skew shape $\lambda/\mu$. Divide the cells of $\Dbb(\lambda)$ into collections $\ldots,X_{-1},X_0,X_1,\ldots$ according to their \emph{contents}; that is,
\[ X_k = \{ c_{i,j}\in \Dbb(\lambda)\ |\ k=j-i \}. \]
We consider cells to be partially ordered so that $c\leq c^{\pr}$ if $c$ is weakly northwest of $c^{\pr}$; that is, $c_{i,j}\leq c_{i^{\pr},j^{\pr}}$ if $i\leq i^{\pr}$ and $j\leq j^{\pr}$. An \emph{excited diagram} $D$ of type $\lambda/\mu$ is a subset of cells of $\Dbb(\lambda)$ such that
\begin{itemize}
  \item there exists a bijection $\eta:\Dbb(\mu)\ra D$ with $\eta(c_{i,j})\in X_{j-i}$ for all $c_{i,j}\in\Dbb(\mu)$, and
  \item for each $k \in \mathbb{Z}$, the restriction of $\eta$ to $\Dbb(\mu)\cap(X_k\cup X_{k+1})$ is order-preserving.
\end{itemize}
We let $\Ebb(\lambda/\mu)$ be the set of excited diagrams of type $\lambda/\mu$. The six excited diagrams of shape $(3,3,3)/(2,2)$ are shown in Figure~\ref{fig:tab} (right).

\begin{theorem}[Naruse]\label{thm:naruse}
  For a skew shape $\lambda/\mu$ of size $n$,
  \[ f^{\lambda/\mu} = \frac{n!}{\prod_{c\in\Dbb(\lambda)}h(c)}\sum_{D\in\Ebb(\lambda/\mu)}\prod_{c^{\pr}\in D}h(c^{\pr}). \]
\end{theorem}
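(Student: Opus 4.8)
The identity is Naruse's hook-length formula; it has several known proofs --- a bijective argument of Konvalinka, the equivariant-localization argument of Ikeda--Naruse via factorial Schur functions, and the symmetric-function arguments of Morales--Pak--Panova. I will outline the route that stays closest to the combinatorics of excited diagrams set up above, namely matching two determinantal expressions for the two sides by a Lindstr\"om--Gessel--Viennot (LGV) computation. As a first step I would normalize: dividing both sides by $\prod_{c\in\Dbb(\lambda)}h(c)$ and using $\prod_{c\in D}h(c)/\prod_{c\in\Dbb(\lambda)}h(c)=\prod_{c\in\Dbb(\lambda)\setminus D}h(c)^{-1}$, Theorem~\ref{thm:naruse} becomes equivalent to
\[
\frac{f^{\lambda/\mu}}{n!}=\sum_{D\in\Ebb(\lambda/\mu)}\ \prod_{c\in\Dbb(\lambda)\setminus D}\frac{1}{h(c)}.
\]

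For the left-hand side I would use the classical Aitken determinant. Combining the Jacobi--Trudi identity $s_{\lambda/\mu}=\det\big[h_{\lambda_i-\mu_j-i+j}\big]_{i,j=1}^{\ell}$ (valid for any $\ell\ge\ell(\lambda)$) with the exponential specialization --- the ring homomorphism on symmetric functions sending $h_k\mapsto 1/k!$, which sends $s_{\lambda/\mu}\mapsto f^{\lambda/\mu}/n!$ and commutes with the determinant --- yields
\[
\frac{f^{\lambda/\mu}}{n!}=\det\!\left[\frac{1}{(\lambda_i-\mu_j-i+j)!}\right]_{i,j=1}^{\ell},
\]
with the convention $1/k!=0$ for $k<0$. (Equivalently, one counts standard Young tableaux as saturated chains from $\Dbb(\mu)$ to $\Dbb(\lambda)$ in Young's lattice and applies inclusion--exclusion.)

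The crux is to recover the same determinant from the right-hand side. I would invoke the correspondence (due to Kreiman, and used by Morales--Pak--Panova) between excited diagrams of type $\lambda/\mu$ and tuples of non-intersecting lattice paths inside $\Dbb(\lambda)$: an excited diagram $D$ is encoded by $\ell$ non-intersecting paths, where the $i$-th path ``carries'' the $i$-th row of $\mu$, the elementary excited moves correspond to the elementary deformations of such path families, and the sources and sinks are prescribed by the rows of $\mu$ together with the boundary of $\lambda$. Under this encoding the weight $\prod_{c\in\Dbb(\lambda)\setminus D}h(c)^{-1}$ becomes the weight of the path family for the cell-weighting that assigns $1/h(c;\lambda)$ to each cell of $\Dbb(\lambda)$ off the paths, and --- this is the point of weighting by reciprocal hook lengths --- the generating function of a single path between matched source $i$ and sink $j$ collapses, by a binomial-coefficient identity, to $1/(\lambda_i-\mu_j-i+j)!$. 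The LGV lemma then evaluates the signed sum over non-intersecting path families as exactly the determinant above, and comparing with the evaluation of the left-hand side completes the proof. (An alternative implementation of the same idea replaces the path model by the Lascoux--Pragacz expansion of $s_{\lambda/\mu}$ as a determinant of border-strip skew Schur functions, each then evaluated by the ordinary hook-length formula.)

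The main obstacle is the bookkeeping in this last step. One must fix the encoding of excited diagrams as non-intersecting path families and verify it is a bijection intertwining excited moves with path deformations, so that $\sum_{D\in\Ebb(\lambda/\mu)}$ genuinely becomes the LGV sum; and one must check that the \emph{global} hook lengths $h(c;\lambda)$ --- which depend on all of $\lambda$, not merely on the skew region --- line up along each individual path as a consecutive block of integers whose reciprocal products telescope to a reciprocal factorial, with ``non-crossing'' of the paths matching precisely the excited (order-preserving on each pair of adjacent content diagonals) condition. Once the correspondence and its weight-preserving property are pinned down, the remaining ingredients --- Jacobi--Trudi, the exponential specialization, and the LGV lemma itself --- are standard.
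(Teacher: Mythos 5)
First, a caveat: the paper does not prove Theorem~\ref{thm:naruse} at all --- it quotes the result from Naruse and refers the reader to \cite{morales2015hook} for proofs --- so there is no in-paper argument to compare yours against. Your outline does point at a genuine proof route from that literature (the Lascoux--Pragacz/LGV proof of Morales--Pak--Panova), and the normalization step and the Aitken determinant $f^{\lambda/\mu}/n!=\det[1/(\lambda_i-\mu_j-i+j)!]$ are correct and standard. But the two steps you defer as ``bookkeeping'' are exactly where the content of the theorem lives, and as you have set them up they do not close.

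Concretely: (1) under the Kreiman/MPP correspondence, the complement $\Dbb(\lambda)\setminus D$ of an excited diagram is a union of $k$ non-intersecting paths where $k$ is the number of border strips in the outer (Kreiman) decomposition of $\lambda/\mu$; the paths are \emph{not} indexed by the rows of $\mu$, and $k$ is in general much smaller than the size of the Jacobi--Trudi matrix. So the determinant that LGV produces is a $k\times k$ Lascoux--Pragacz-type determinant indexed by those strips, not the row-indexed Aitken determinant. For the paper's running example $(3,3,3)/(2,2)$ the LGV determinant is $1\times 1$ while the Aitken determinant is $3\times 3$; both evaluate to $1/20$, but ``comparing with the evaluation of the left-hand side'' is then not a comparison of identical determinants, so the final step of your plan fails as stated. (2) The single-path entry $\sum_{\gamma}\prod_{c\in\gamma}1/h(c;\lambda)$ is not $1/(\lambda_i-\mu_j-i+j)!$, and the hooks along a path are not a consecutive block of integers: for the unique strip of $(3,3,3)/(2,2)$ the hooks along the minimal path are $3,2,1,2,3$. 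The identity that actually holds is $\sum_{\gamma\sim\theta}\prod_{c\in\gamma}1/h(c;\lambda)=f^{\theta}/|\theta|!$ for the corresponding border strip $\theta$ --- i.e., the ribbon case of Naruse's formula itself --- and it requires its own inductive proof; it is not a binomial telescoping. The repair is essentially your parenthetical ``alternative'': replace Jacobi--Trudi/Aitken by the exponential specialization of the Lascoux--Pragacz (Hamel--Goulden) determinant, prove the border-strip identity separately, and then match entries strip by strip via LGV. As written, the proposal has a genuine gap rather than mere bookkeeping.
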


Applying this formula to the shape $(3,3,3)/(2,2)$, we get the identity
\[ 6 = \frac{5!}{5\cdot 4^2\cdot 3^3\cdot 2^2\cdot 1}\left(5\cdot 4^2\cdot 3 + 5\cdot 4^2 \cdot 1 + 5\cdot 4\cdot 2 \cdot 1 + 5\cdot 4\cdot 2 \cdot 1 + 5\cdot 2^2 \cdot 1 + 3\cdot 2^2 \cdot 1\right). \]

Theorem~\ref{thm:naruse} was discovered by Naruse and announced in \cite{naruse2014schubert}. For (several) proofs of the formula and pointers to the literature, we recommend \cite{morales2015hook}.

\subsection{Skew shapes with varying first row}\label{subsec:polynomials}

For $t\in\Zbb_{>0}$, we let $\lambda^{(t)}$ be the partition obtained from $\lambda$ by replacing the first part $\lambda_1$ with $\lambda_1+t-1$. For a fixed shape $\lambda/\mu$, we define the size of $\lambda^{(t)}/\mu$ to be $n+t-1$ so that $n$ does not depend on $t$. We consider the function
\[ p(t;\lambda/\mu) = f^{\lambda^{(t)}/\mu}. \]
If the skew shape $\lambda/\mu$ is understood, we simply write $p(t)$ for this function. For the remainder of this section, we will assume that $\lambda_1=\lambda_2$. We are free to make this assumption since $p(t;\lambda^{(u)}/\mu)=p(t+u-1;\lambda/\mu)$ for $u\geq 1$. We fix some additional parameters:
\begin{itemize}
\item $r=\lambda_1$,
\item $s=\mu_1$, and
\item $\alpha_i=h(c_{1,i}; \lambda)-1$ for $1\le i\le r$.
\end{itemize}

We use Theorem~\ref{thm:naruse} to give a nice formula for $p(t)$ in Lemma~\ref{lem:poly_factorization}. Before doing so, we first make a few observations.

The only cells whose hook lengths vary with $t$ are those in the first row. For each excited diagram $D \in \mathbb{E}(\lambda^{(t)}/\mu)$, let $\ov{D}$ be the subdiagram of $D$ obtained by removing all cells from the first row of $\Dbb(\lambda^{(t)})$.

Observe that the shapes $\lambda^{(t)}/\mu$ and $\lambda/\mu$ have the \emph{same} set of excited diagrams $\Ebb(\lambda/\mu)$. Furthermore, if $D$ is an excited diagram with $d$ cells in the first row, then its first row must be $\{c_{1,1},\ldots,c_{1,d}\}$. We let $E(t)$ be the \emph{excitation factor} of $\lambda^{(t)}/\mu$; i.e.,
\begin{align*}
  E(t) &= \sum_{D\in\Ebb(\lambda^{(t)}/\mu)}\left(\prod_{c\in \ov{D}}h(c)\right)\left(\prod_{c_{1,i}\in D}(t+\alpha_i)\right) \\
  &= \sum_{d=0}^s\left( \prod_{i=1}^d(t+\alpha_i)\right) \left( \sum_{\substack{D\in\Ebb(\lambda/\mu)\\\#(D - \ov{D}) = d}} \prod_{c\in \ov{D}}h(c) \right)
\end{align*}

When $d=s$, the inner sum is nonempty, so $E(t)$ has degree {\em exactly} $s$. Let $C_0,C_1,\ldots,C_s$ be the nonnegative integers for which
\[ E(t) = C_0(t+\alpha_1)\cdots(t+\alpha_s) + C_1(t+\alpha_1)\cdots(t+\alpha_{s-1}) + \cdots + C_{s-1}(t+\alpha_1) + C_s \] 
so that $C_d(t+\alpha_1)\cdots(t+\alpha_{s-d})$ is the ``contribution'' to the excitation factor from those excited diagrams with $s-d$ cells in the first row.

\begin{remark} \label{rmk:NB}
The list of polynomials $1,\ t+\alpha_1,\ (t+\alpha_1)(t+\alpha_2),\dots,\ (t+\alpha_1) \cdots (t+\alpha_s)$ is an example of a Newton basis for the space of polynomials of degree $\leq s$. A sequence of polynomials $(p_k(t))_{k=0}^s$ is a \emph{Newton basis} if there exist complex numbers $\beta_1,\ldots,\beta_s\in\Cbb$ and $\lambda_0,\ldots,\lambda_s\in\Cbb\setm\{0\}$ such that $p_k(t)=\lambda_k\prod_{i=1}^k(t+\beta_i)$, for $k = 0, 1, \dots, s$. Using inequalities on the coefficients of polynomials with respect to a Newton basis is a common approach to proving bounds on the roots of those polynomials. This approach is taken for bounding the roots of descent polynomials in \cite{bencs2018some} and \cite{diaz2017descent} using the falling factorial basis.
\end{remark}

\begin{remark}
After the preprint version of this present note became available, Cai \cite{cai2021ratios} investigated this sequence $(C_0, C_1, \ldots, C_s)$ of coefficients. Cai calls these coefficients ``Naruse-Newton coefficients,'' and examines their various properties, such as log-concavity, unimodality, and limiting behavior. In Appendix A of \cite{cai2021ratios}, Cai gives tables listing the values of the coefficients for all nonempty descent sets $I \subseteq \{1, 2, \ldots, 7\}$.
\end{remark}

We now calculate
\begin{equation} \label{eq:pt}
  p(t) = (n+t-1)!\left(\prod_{c\in\ov{\Dbb(\lambda)}}\frac{1}{h(c)}\right)\left(\prod_{i=1}^{r}\frac{1}{t+\alpha_i}\right)\frac{1}{(t-1)!}E(t).
\end{equation}

For convenience, we will assume that $\Dbb(\lambda/\mu)$ is connected; i.e., $\mu_i<\lambda_{i+1}$ whenever $\mu_i\neq 0$. In particular, this implies that $\alpha_1\leq n-1$ holds. By canceling common factors in the expression above, we obtain a useful factorization of the polynomial $p(t)$ in Lemma~\ref{lem:poly_factorization}. We collect some of the factors that do not depend on $t$ as a constant $C$. The first, the third, and the fourth factors combine to be a polynomial in $t$. 

\begin{lemma}\label{lem:poly_factorization}
  If $\Dbb(\lambda/\mu)$ is connected, then there exists a positive real number $C$ not depending on $t$ such that
  \[ p(t) = C\cdot E(t)\prod_{\substack{\beta\in\{0,1,\dots,n-1\}\\\beta\notin\{\alpha_1,\ldots,\alpha_r\}}}(t+\beta). \]
\end{lemma}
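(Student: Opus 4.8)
The plan is to start from the explicit expression for $p(t)$ in Equation~\eqref{eq:pt} and simplify it by carefully tracking which linear factors $(t+\beta)$ appear in the numerator and which in the denominator. First I would rewrite the two factorial expressions using the assumption that the size of $\lambda^{(t)}/\mu$ is $n+t-1$: we have
\[
\frac{(n+t-1)!}{(t-1)!} = \prod_{\beta=t}^{n+t-1}\beta = \prod_{j=0}^{n-1}(t+j) = \prod_{\beta\in\{0,1,\dots,n-1\}}(t+\beta),
\]
so that the ``first and fourth factors'' of \eqref{eq:pt} combine into the product $\prod_{\beta=0}^{n-1}(t+\beta)$ of $n$ linear polynomials in $t$.

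Next I would cancel this against the third factor $\prod_{i=1}^r \frac{1}{t+\alpha_i}$. The key observation is that $\alpha_1,\dots,\alpha_r$ are $r$ distinct integers lying in $\{0,1,\dots,n-1\}$: they are distinct because $\alpha_i = h(c_{1,i};\lambda)-1$ is strictly decreasing in $i$ (moving right along the first row strictly decreases the hook length, using $\lambda_1=\lambda_2$ so that every first-row cell has at least one cell below it), and they lie in the required range because $\alpha_1 = h(c_{1,1};\lambda)-1 \le n-1$ by connectedness (as remarked in the text) while $\alpha_r \ge 0$. Hence the multiset $\{\alpha_1,\dots,\alpha_r\}$ is a genuine $r$-element subset of $\{0,\dots,n-1\}$, and after cancellation the surviving linear factors in the numerator are exactly $\prod_{\beta\in\{0,\dots,n-1\},\ \beta\notin\{\alpha_1,\dots,\alpha_r\}}(t+\beta)$, which is the product appearing in the statement.

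Finally I would collect the remaining $t$-independent quantity $\prod_{c\in\ov{\Dbb(\lambda)}}\frac{1}{h(c)}$ (a product of positive reals, hence itself a positive real) as the constant $C$; note that $\ov{\Dbb(\lambda)}$ denotes $\Dbb(\lambda)$ with its first row removed, so none of these hook lengths depend on $t$, and none is zero. Putting the pieces together gives $p(t) = C\cdot E(t)\prod_{\beta\notin\{\alpha_i\}}(t+\beta)$ as claimed, and $C>0$ since it is a finite product of reciprocals of positive integers.

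I do not expect a serious obstacle here; this is essentially a bookkeeping argument. The one point requiring a little care is justifying that $\alpha_1,\dots,\alpha_r$ are distinct and lie in $\{0,1,\dots,n-1\}$ — the distinctness uses $\lambda_1=\lambda_2$ (so the strict decrease of hook lengths along the first row genuinely holds for all of $c_{1,1},\dots,c_{1,r}$), and the upper bound $\alpha_1\le n-1$ is exactly the consequence of connectedness noted just before the lemma. Once these are in place, the cancellation is forced and the factorization follows immediately.
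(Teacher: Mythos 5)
Your proposal is correct and follows essentially the same route as the paper, which gives no formal proof but simply describes combining the first, third, and fourth factors of Equation~\eqref{eq:pt} into a polynomial and absorbing the $t$-independent hook-length product into the constant $C$. Your added care in checking that $\alpha_1,\dots,\alpha_r$ are distinct integers in $\{0,\dots,n-1\}$ (so that the cancellation is against genuinely distinct factors of $(n+t-1)!/(t-1)! = \prod_{j=0}^{n-1}(t+j)$) is exactly the bookkeeping the paper leaves implicit.
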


Lemma~\ref{lem:poly_factorization} allows us to reduce the problem of bounding the roots of $p(t)$ to bounding the roots of the lower degree polynomial $E(t)$, which we do in Section~\ref{sec:main_conj}.

\subsection{Ribbons and descent polynomials}\label{subsec:ribbons}

A \emph{ribbon} is a (nonempty) connected skew Young diagram $\Dbb$ that does not contain a $2\times 2$ block of cells. If $T$ is a standard filling of $\Dbb$, then $T$ determines a permutation $\pi(T)=\pi_1\cdots\pi_n$ whose entries appear in order along the ribbon, starting from the bottom left corner to the upper right. The positions of the descents of $\pi(T)$ are determined by the shape of $\Dbb$, as illustrated in Figure~\ref{fig:ribbon}, where the shape of the ribbon forces the permutation $\pi(T)$ to have descent set $I=\{3,5,8,9,11\}$. Namely, there is a descent at $i$ if and only if the $i$-th cell of the ribbon is below the $(i+1)$-st cell. Conversely, if $I\subseteq[n-1]$ we may construct a ribbon $\Dbb$ for which the permutations $\pi$ with descent set $I$ are of the form $\pi=\pi(T)$ for some standard filling $T$ of $\Dbb$. Recall that when $I$ is a descent set, we let $m$ denote $\max(I \cup \{0\})$.

\begin{figure}
  
  \centering
  \includegraphics{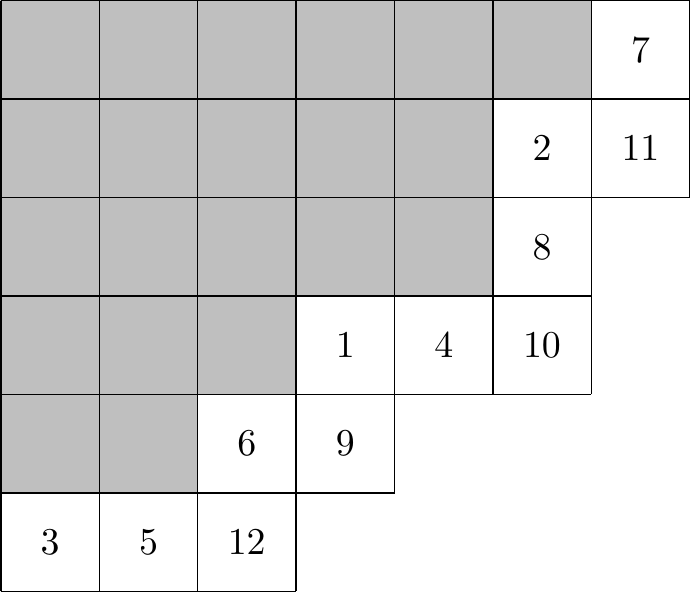}
  \caption{\label{fig:ribbon}A tableau of ribbon shape}
    
\end{figure}

Now suppose $\Dbb(\lambda/\mu)$ is a ribbon. Combined with the assumption that $\lambda_1=\lambda_2$, this implies $\mu_1=\lambda_1-1$. Hence, $s=r-1,\ m=n-1$, and the polynomial $p(t)$ has degree $m$.

In this dictionary between standard fillings of a ribbon and permutations with a given descent set, the addition of cells to the first row of $\Dbb$ corresponds to taking longer permutations without changing the descent set. So if $\Dbb(\lambda/\mu)$ is a ribbon shape corresponding to the descent set $I$, we have the identity of polynomials $p(t-m)=d_I(t)$. Furthermore, one may observe that the ascents in the permutation are either in $\{m+1,m+2,\ldots\}$ or $\{m-\alpha_i:\ i\in\{2,\ldots,r\}\}$.

Using Lemma~\ref{lem:poly_factorization}, we see that an integer $\gamma\in\{-m,\ldots,0\}$ is among the roots of $p(t)$ whenever $\gamma\neq-\alpha_i$ for any $i\in[r]$. This means $\gamma$ is a root of $p(t)$ whenever $m+\gamma$ is in $I$. These are precisely the roots of the descent polynomials indicated in \cite[Theorem 4.1]{diaz2017descent}.

To prove suitable bounds on the roots of the excitation factor $E(t)$, we show that the sequence of coefficients $(C_0,C_1,\ldots,C_s)$ does not ``grow too quickly,'' in the following sense.

\begin{proposition}\label{prop:ineq_coeff}
  For ribbons, with $(C_0,\ldots,C_s)$ defined as above,
  \[ \frac{C_0}{0!}\geq \frac{C_1}{1!}\geq \frac{C_2}{2!}\geq\cdots\geq \frac{C_s}{s!}. \]
\end{proposition}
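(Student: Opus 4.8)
The plan is to prove Proposition~\ref{prop:ineq_coeff} by exhibiting, for each index $d$ with $1 \le d \le s$, a combinatorial identity or inequality that directly compares $\frac{C_{d-1}}{(d-1)!}$ with $\frac{C_d}{d!}$, and to obtain this comparison from the Slice and Push Inequality. Recall from Section~\ref{subsec:polynomials} that $C_d$ is the sum $\sum_{c \in \ov{D}} h(c)$ over all excited diagrams $D \in \Ebb(\lambda/\mu)$ having exactly $s-d$ cells in the first row; equivalently, $C_d$ is (a specialization of) the weight of a $\sqci$-diagram where the $\square$-type cell records the position of the ``boundary'' between the first-row cells of $D$ and those outside it. Since we are in the ribbon case, where $s = r-1$ and $\mu_1 = \lambda_1 - 1$, the excited diagrams have a particularly rigid structure: their first rows are intervals $\{c_{1,1}, \ldots, c_{1,d}\}$, and the combinatorics of ``which of the $s$ cells of $\mu$ have been pushed down out of the first row'' is controlled by a single threshold. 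First I would make this dictionary precise: identify the generating object whose $\sqci$-weight equals $C_d$ (up to the factorial normalization), and check that incrementing $d$ by one corresponds exactly to slicing along one more vertical line and pushing, in the sense of Lemma~\ref{lem:s_and_p}.

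The key steps, in order, would be: (1) Set up the $\sqci$-diagram $D$ and family $F$ so that $\wt(D;F)$ encodes $C_{d-1}$ and $\wt(D|_k; F \sqcup (|_k D)^{\ra})$ encodes $C_d$, for the appropriate slicing position $k$. (2) Account for the factorials: the extra factor of $\frac{1}{d}$ relating $\frac{C_d}{d!}$ to $\frac{C_{d-1}}{(d-1)!}$ should come from the fact that pushing a block of $\raisebox{1.0pt}\fullmoon$-cells past a column introduces (or removes) a hook-length factor, or from a counting multiplicity of $d$ in how boundary positions can be chosen; I would verify that the bound $\wt(D;F) \ge \wt(D|_k; F \sqcup (|_k D)^{\ra})$ of Lemma~\ref{lem:s_and_p}, combined with this factorial bookkeeping, yields precisely $\frac{C_{d-1}}{(d-1)!} \ge \frac{C_d}{d!}$ rather than merely $C_{d-1} \ge C_d$. (3) Iterate over $d = 1, \ldots, s$ to get the full chain of inequalities. (4) Handle the base/edge cases ($d=1$ and $d=s$, where the inner sum in the definition of $E(t)$ is guaranteed nonempty) separately if the slicing argument degenerates there.

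The main obstacle I anticipate is step (2): matching the factorial normalization $\frac{1}{d!}$ to the hook-length weights on the nose. The Slice and Push Inequality as stated is an inequality of raw weights, so getting the extra $\frac{1}{d}$ per step requires either (a) identifying a forgotten multiplicative factor of $d$ on the larger side — plausibly the hook length $h(c_{1,d})$ of the cell in column $d$ of the first row, which in the ribbon normalization equals $t + \alpha_d$ evaluated appropriately, or equals $d$ after the relevant cancellations — or (b) refining Lemma~\ref{lem:s_and_p} to track this factor. A secondary subtlety is that the coefficients $C_d$ are defined via the Newton basis $\{(t+\alpha_1)\cdots(t+\alpha_{s-d})\}$, not the monomial basis, so I would need to be careful that the ``contribution from excited diagrams with $s-d$ first-row cells'' is genuinely $C_d$ and not some linear combination; the text's remark that $C_d (t+\alpha_1)\cdots(t+\alpha_{s-d})$ is exactly that contribution should settle this, but I would double-check it against the explicit expansion of $E(t)$ given just before Remark~\ref{rmk:NB}. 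Once the weight-to-coefficient dictionary and the factorial bookkeeping are pinned down, the proposition follows immediately by applying Lemma~\ref{lem:s_and_p} $s$ times.
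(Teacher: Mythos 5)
Your high-level plan is the same as the paper's (compare consecutive $C_d$'s via the Slice and Push Inequality and track a factorial factor), but the step you yourself flag as the main obstacle --- where the extra factor of $d$ comes from --- is precisely the missing idea, and both of your guesses for it point the wrong way. The factorials do not come from the first-row hook lengths $h(c_{1,d})$: those are the linear factors $t+\alpha_i$, which are already divided out in the very definition of $C_d$. Nor is there a multiplicity-$d$ choice of boundary position. The correct source is the hook lengths of the cells at the \emph{bottoms of the columns} of $\lambda$, i.e.\ the positions where the circles that leave the first row land as fixed squares. The paper first establishes the identity (Lemma~\ref{lem:ribbon_s_and_p})
\[
C_{s-i} \;=\; \wt\bigl(\ov{D}^1|_i;\ (|_i\ov{D}^1)^{\ra}\bigr)\prod_{j=1}^{s-i}h\bigl(c_{\lambda_{i+j+1}^{\pr},\,i+j+1}\bigr),
\]
where $\ov{D}^1$ is the part of $\Dbb(\mu)$ below the first row. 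Applying the Slice and Push Inequality to $\ov{D}^1$ gives $\wt(\ov{D}^1|_\ell;(|_\ell\ov{D}^1)^{\ra})\le\wt(\ov{D}^1|_k;(|_k\ov{D}^1)^{\ra})$ for $\ell<k$, hence $C_{s-\ell}\leq C_{s-k}\prod_{j=1}^{k-\ell}h(c_{\lambda^{\pr}_{\ell+j+1},\ell+j+1})$.

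The factorial bound is then a separate, ribbon-specific fact that your proposal does not supply: since $\lambda^{\pr}_{s+2}=0$, the bottom cell of the last column $s+1$ has hook length $1$, and moving leftward the bottom-of-column hook length either resets to $1$ or grows by exactly $1$, so the bottom hook length in the $j$-th column from the right is at most $j$. Hence the product above is at most $(s-\ell)!/(s-k)!$, which is exactly the ratio of factorials needed to turn the weight comparison into $C_{s-\ell}/(s-\ell)!\le C_{s-k}/(s-k)!$. Without the decomposition of $C_{s-i}$ into a $\sqci$-weight times bottom-of-column hooks, and without this hook-length recursion, Slice and Push alone only compares the $\sqci$-weights and gives no control on the ratios $C_d/C_{d-1}$; so as written the proposal has a genuine gap at its central step.
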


The proof of Proposition~\ref{prop:ineq_coeff} will be given in Section~\ref{sec:slice_push}. These inequalities are used in Section~\ref{sec:main_conj} to prove Theorem~\ref{conj:main}.

Let us add a few remarks here. First, observe that for any descent set $I$, when we fill in the hook lengths inside the cells of the Young diagram of $\lambda^{(t)}$, the hook lengths that appear in the cells on the first column but strictly below the first row are exactly the elements of $I$. See, for example, Figure \ref{d35}, where the descent set is $I = \{3,5\}$. Note the hook lengths of $5$ and $3$ on the first column. Second, Naruse's formula is impressive as it expresses the number of skew tableaux in terms of a {\em positive} sum over combinatorial objects. By this property, we know immediately that if we expand any descent polynomial in the Newton basis (as described in Remark \ref{rmk:NB}), the coefficients $C_0, C_1, \ldots, C_s$ are {\em strictly positive integers}. Note that many algebraic combinatorics problems involve attempting to give some positive integers discovered elsewhere an elegant combinatorial interpretation. Here, given the formula of Naruse's, these coefficients come with their combinatorial meanings by construction. Third, it is known that the descent polynomial $d(I; x)$ (using the notation of \cite{diaz2017descent}) may have some negative integer coefficients in general, when we expand in the usual polynomial basis $1, x, x^2, \ldots$. By the analysis using Naruse's formula we show here, we obtain as an immediate consequence that for any nonempty descent set $I$, all the coefficients of $d(I; x+\alpha_1)$ in the usual polynomial basis $1, x, x^2, \ldots$ are strictly positive. (Recall that when $I$ is nonempty, $\alpha_1 = m = \max(I)$.)

\subsection{Computing the descent polynomial via Naruse's formula}\label{subsec:computing}

Before ending Section \ref{sec:naruse}, we use this subsection here to describe explicitly how one uses Naruse's hook-length formula to compute the descent polynomial easily. Example \ref{ex:d35} showcases an explicit calculation in the case $I = \{3,5\}$.

Fix a descent set $I$. From the Young diagram of the corresponding skew shape $\lambda/\mu$, we determine $\alpha_1, \alpha_2, \ldots, \alpha_s$. From Equation (\ref{eq:pt}) in Subsection \ref{subsec:polynomials}, we write the descent polynomial $p(t)$ as a product
\[
p(t) = T(t) \cdot E(t),
\]
where $T(t)$ is the {\em trivial} part, and $E(t)$ is the excitation factor. The trivial part has an easy description:
\[
T(t) = \frac{1}{\text{product of hook lengths below the first row of } \lambda} \cdot \prod_{i \in I} (t+\alpha_1 - i).
\]
The excitation factor $E(t)$ is computed using the combinatorial formula in Subsection \ref{subsec:polynomials}.

We remark that our notation $p(t)$ for the descent polynomial is slightly different from the notation $d(I; - )$, used in \cite{diaz2017descent}. To compute explicitly the polynomial $d(I; N)$, the number of permutations in $\mathfrak{S}_N$ whose descent sets are exactly $I$, we simply note the translation $N = t+\alpha_1$. As a result, we have
\[
d(I; N) = p(N - \alpha_1).
\]
Using our formula for $p(t)$ above, we obtain the desired descent polynomial.

One interesting immediate consequence of the expression $p(t) = T(t) \cdot E(t)$ is that we may recover Lemma 3.8 of \cite{diaz2017descent}. Observe that $d(I;0) = p(-\alpha_1) = T(-\alpha_1) \cdot E(-\alpha_1)$. We know that
\begin{align*}
T(-\alpha_1) &= \frac{1}{\text{product of hook lengths below the first row of } \lambda} \cdot \prod_{i \in I} (-i) \\
&= \frac{(-1)^{\# I} \cdot \prod_{i \in I} i }{\text{product of hook lengths below the first row of } \lambda}
\end{align*}
On the other hand, $E(-\alpha_1) = C_s$, which is the product of hook lengths below the first row, strictly to the right of the first column, of $\lambda$. Since the hook lengths below the first row on the first column of $\lambda$ are exactly the elements of $I$, we find that
\[
E(-\alpha_1) = \frac{\text{product of hook lengths below the first row of } \lambda}{\prod_{i \in I} i },
\]
whence $d(I;0) = (-1)^{\# I}$, which is precisely Lemma 3.8 of \cite{diaz2017descent}.

\begin{figure}
\begin{tikzpicture}[scale = 7/8]
\def\w{3/2} 
\def\h{1} 
\def\r{0.3} 
\def\s{5/8} 
\def\tw{\w/2} 
\def\th{\h/2} 

\def\drawacircle[#1,#2,#3]{
\filldraw[#3] (#1 +\w/2, #2 + \h/2) circle (\r);
}

\def\drawasquare[#1,#2,#3]{
\filldraw[#3] (#1 +\w/2 -\s/2, #2 + \h/2 -\s/2) rectangle (#1 +\w/2 +\s/2, #2 + \h/2 +\s/2);
}

\def\addtext[#1,#2,#3]{
\node at (#1*\w + \tw,#2*\h + \th) {#3};
}

\filldraw[gray!15] (0*\w,1*\h) rectangle (2*\w,3*\h);
\filldraw[gray!15] (2*\w,2*\h) rectangle (3*\w,3*\h);

\foreach \xa\ya\xb\yb in {0/0/3/3,3/1/4/3,4/2/9/3}
{
  \draw[xstep = \w, ystep = \h] (\xa*\w,\ya*\h) grid (\xb*\w,\yb*\h);
}

\addtext[0,2,$t+5$]
\addtext[1,2,$t+4$]
\addtext[2,2,$t+3$]
\addtext[3,2,$t+1$]
\addtext[4,2,$t-1$]
\addtext[5,2,$t-2$]
\addtext[6,2,$\cdots$]
\addtext[7,2,$2$]
\addtext[8,2,$1$]
\addtext[0,1,$5$]
\addtext[1,1,$4$]
\addtext[2,1,$3$]
\addtext[3,1,$1$]
\addtext[0,0,$3$]
\addtext[1,0,$2$]
\addtext[2,0,$1$]

\end{tikzpicture}
\caption{\label{d35} The hook lengths for $\lambda^{(t)}$ when $\lambda = (4,4,3)$.}
\end{figure}

\begin{example} \label{ex:d35}
In this example, we compute the descent polynomial for the descent set $I = \{3,5\}$. Take a look at Figure \ref{d35}. The ribbon corresponding to $I = \{3,5\}$ is $\lambda/\mu$, where $\lambda = (4,4,3)$ and $\mu = (3,2)$. In the figure, the partition $\lambda^{(t)}$ is shown along with the hook length in each cell. The five cells of $\mu$ are shaded. In this case, we have $r = 4$ and $s = 3$. The $\boldsymbol\alpha$-vector is $\boldsymbol\alpha = (\alpha_1, \alpha_2, \alpha_3, \alpha_4) = (5,4,3,1)$. In general, the hook lengths on the first row of $\lambda^{(t)}$ are always
\[
t+\alpha_1, t+\alpha_2, \ldots, t+ \alpha_r, t-1, t-2, \ldots, 3, 2, 1.
\]

There are $9$ excited diagrams of $\lambda/\mu$. Three of them are scalar multiples of $(t+5)(t+4)(t+3)$. Another three are multiples of $(t+5)(t+4)$. Two are multiples of $(t+5)$. The other one is a scalar. We write
\[
E(t) = C_0(t+5)(t+4)(t+3) + C_1(t+5)(t+4) + C_2(t+5) + C_3.
\]
These coefficients can be computed directly:
\begin{align*}
C_0 &= 5 \cdot 4 + 5 \cdot 1 + 2 \cdot 1 = 27, \\
C_1 &= (5 \cdot 4 + 5 \cdot 1 + 2 \cdot 1) \cdot 1 = 27, \\
C_2 &= (5+2) \cdot 3 \cdot 1 \cdot 1 = 21, \\
C_3 &= 4 \cdot 3 \cdot 2 \cdot 1 \cdot 1 = 24.
\end{align*}
The trivial part of the descent polynomial is
\[
T(t) = \frac{1}{5 \cdot 4 \cdot 3 \cdot 3 \cdot 2 \cdot 1 \cdot 1} \cdot t(t+2) = \frac{t(t+2)}{360}.
\]

Therefore, the descent polynomial is
\[
p(t) = \frac{t(t+2)}{360} \left( 27(t+5)(t+4)(t+3) + 27(t+5)(t+4) + 21(t+5) + 24 \right).
\]
To translate to the notation of \cite{diaz2017descent}, we simply use the shift $N = t+5$ to obtain
\[
d(\{3,5\};N) = \frac{(N-5)(N-3)}{360} \left( 27N(N-1)(N-2) + 27N(N-1) + 21N + 24 \right).
\]
For readers interested in computational data, we recommend Cai's work \cite{cai2021ratios}, especially the table of coefficients in the appendix.

\end{example}

\section{The Slice and Push Inequality}\label{sec:slice_push}

To prove Proposition~\ref{prop:ineq_coeff}, we apply an inductive argument to a slightly more general statement. For this, we consider a more general class of subdiagrams of a Young diagram $\Dbb(\lambda)$. Recall that we divide the diagram $\Dbb(\lambda)$ into diagonals $\ldots,X_{-1},X_0,X_1,\ldots$ by their contents.

Recall that a \emph{multiset} is, informally, a set in which each element may appear more than once. We say a multiset $D$ is a \emph{multi-subset} of a set $X$ if every element of $D$ is in $X$. For instance, $\{2,3,3\}$ is a multi-subset of $\{1,2,3\}$. If $D$ and $F$ are multisets, the \emph{multiset union} $D\sqcup F$ is the multiset where the multiplicity of each element is the sum of its multiplicities in $D$ and $F$; e.g., $\{2,3\}\sqcup\{3\}=\{2,3,3\}$. A \emph{subdiagram} of a diagram $\Dbb(\lambda)$ is a finite subset of cells of $\Dbb(\lambda)$. More generally, if $D$ is a finite multi-subset of cells of $\Dbb(\lambda)$, we call it a \emph{multi-subdiagram} of $\Dbb(\lambda)$. The \emph{weight} $\wt(D)$ of a multi-subdiagram is the product of the hook lengths of its cells taken with multiplicity. The following formula is easy to verify.

\begin{lemma}\label{lem:square_relation}
  If $F$ is any multi-subdiagram of $\Dbb(\lambda)$, and $\Dbb(\lambda)$ contains the collection of cells $\{c_{i,j},c_{i^{\pr},j},c_{i,j^{\pr}},c_{i^{\pr},j^{\pr}}\}$, then
  \[ \wt(F\sqcup\{c_{i,j}\})+\wt(F\sqcup\{c_{i^{\pr},j^{\pr}}\})\ =\ \wt(F\sqcup\{c_{i,j^{\pr}}\})+\wt(F\sqcup\{c_{i^{\pr},j}\}). \]  
\end{lemma}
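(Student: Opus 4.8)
The plan is to reduce the identity to a purely numerical statement about hook lengths of four cells forming the corners of a rectangle. Since the weight $\wt(\cdot)$ of a multi-subdiagram is by definition a product of hook lengths taken with multiplicity, and since all four cells $c_{i,j}, c_{i^{\pr},j}, c_{i,j^{\pr}}, c_{i^{\pr},j^{\pr}}$ lie outside $F$ (or more precisely, the contribution of $F$ factors out), I would first factor $\wt(F)$ out of every term. Write $a = h(c_{i,j})$, $b = h(c_{i^{\pr},j})$, $c = h(c_{i,j^{\pr}})$, $d = h(c_{i^{\pr},j^{\pr}})$. The claimed identity then becomes $\wt(F)\bigl(a + d\bigr) = \wt(F)\bigl(c + b\bigr)$, so it suffices to prove $a + d = b + c$, i.e., that the sum of the hook lengths of the two cells on one diagonal of the rectangle equals the sum of the hook lengths of the two cells on the other diagonal.

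The key step is therefore to verify $h(c_{i,j}) + h(c_{i^{\pr},j^{\pr}}) = h(c_{i,j^{\pr}}) + h(c_{i^{\pr},j})$ for $i \le i^{\pr}$ and $j \le j^{\pr}$, assuming all four cells lie in $\Dbb(\lambda)$. I would do this directly from the definition of hook length: $h(c_{p,q}) = (\lambda_p - q) + (\lambda^{\pr}_q - p) + 1$, where $\lambda^{\pr}$ is the conjugate partition. Plugging in the four corners, the right-hand sides become
\begin{align*}
h(c_{i,j}) + h(c_{i^{\pr},j^{\pr}}) &= (\lambda_i - j) + (\lambda^{\pr}_j - i) + 1 + (\lambda_{i^{\pr}} - j^{\pr}) + (\lambda^{\pr}_{j^{\pr}} - i^{\pr}) + 1, \\
h(c_{i,j^{\pr}}) + h(c_{i^{\pr},j}) &= (\lambda_i - j^{\pr}) + (\lambda^{\pr}_{j^{\pr}} - i) + 1 + (\lambda_{i^{\pr}} - j) + (\lambda^{\pr}_j - i^{\pr}) + 1.
\end{align*}
Both sides collect to $\lambda_i + \lambda_{i^{\pr}} + \lambda^{\pr}_j + \lambda^{\pr}_{j^{\pr}} - i - i^{\pr} - j - j^{\pr} + 2$, so they are equal. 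Note that this computation only uses that the four cells lie in $\Dbb(\lambda)$, which is guaranteed by hypothesis; I should remark that $h$ is being computed with respect to $\lambda$ throughout, so no subtlety about which diagram we are in arises.

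I do not anticipate a genuine obstacle here — the statement is flagged in the excerpt as "easy to verify." The only point requiring a sentence of care is the bookkeeping that lets $\wt(F)$ factor out cleanly: the weight of a multiset union is the product of the weights, so $\wt(F \sqcup \{c\}) = \wt(F)\cdot h(c)$ regardless of whether $c$ already appears in $F$, and this makes the factorization legitimate even in the multiset setting. With that observed, the identity $a+d = b+c$ finishes the proof. One could alternatively give a slicker bijective or "local move" argument (each step of sliding a cell along a row or column changes a hook length by a predictable amount), but the direct conjugate-partition computation above is the shortest and most transparent route, so that is the one I would write up.
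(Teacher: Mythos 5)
Your proof is correct: the paper leaves this lemma as ``easy to verify'' with no written argument, and your reduction to the linear identity $h(c_{i,j})+h(c_{i^{\pr},j^{\pr}})=h(c_{i,j^{\pr}})+h(c_{i^{\pr},j})$ via $h(c_{p,q})=(\lambda_p-q)+(\lambda^{\pr}_q-p)+1$, after factoring $\wt(F)$ out of every term, is exactly the intended verification. The remark that $\wt(F\sqcup\{c\})=\wt(F)\cdot h(c)$ holds in the multiset setting regardless of whether $c$ already occurs in $F$ is the right point of care, and nothing further is needed.
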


We consider pairs $(D;F)$ where $D$ is a subdiagram and $F$ is a multi-subdiagram of $\Dbb(\lambda)$ such that for every cell $c_{i,j} \in D$, the cell $c_{i+1,j+1}$ exists in $\Dbb(\lambda)$. We refer to this pair as a \emph{$\sqci$-diagram}, depicted by labeling each cell in $D$ by a circle and each cell in $F$ by a square. The \emph{weight} of a $\sqci$-diagram $(D;F)$ is the sum of the weights of the multi-subdiagrams $D^{\pr}\sqcup F$, where the sum ranges over diagrams $D^{\pr}$ such that
\begin{itemize}
\item there exists a bijection $\eta: D\ra D^{\pr}$ with $\eta(c_{i,j})\in\{c_{i,j},\ c_{i+1,j+1}\}$, and
\item for each $k$, the restriction of $\eta$ to $D\cap(X_k\cup X_{k+1})$ is order-preserving.
\end{itemize}
In other words, one is allowed to move a circle up to one step southeast as long as it does not interfere with the cells from neighboring diagonals. We let $\wt(D;F)$ be the weight of the $\sqci$-diagram $(D;F)$.

\begin{figure}
  
  \centering
  \includegraphics{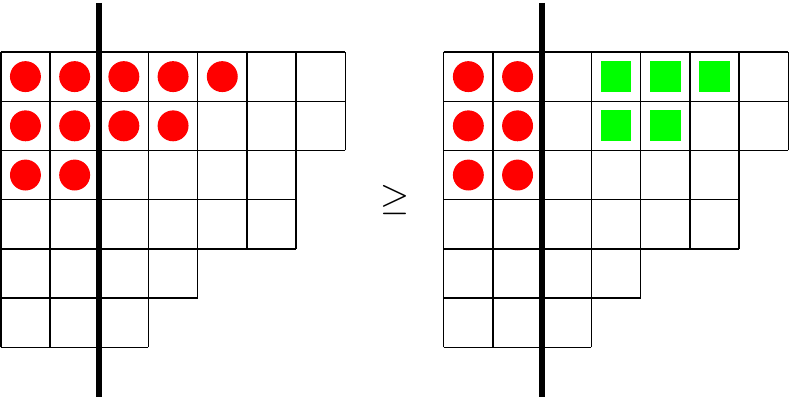}
  \caption{\label{fig:sap1}A depiction of the Slice and Push Inequality}
  
\end{figure}

We will use the following notation for constructing new diagrams from $D$ when $D$ is a subset of cells of $\Dbb(\lambda)$. We let $|_{k}D$ be the subdiagram of $D$ obtained by removing the first $k$ columns of $\Dbb(\lambda)$, while we let $D|_{k}$ be the subdiagram of $D$ contained in the first $k$ columns of $\Dbb(\lambda)$. We think of the bar $|_{k}$ as a ``knife'' placed between columns $k$ and $k+1$ where $D|_{k}$ is the portion of the diagram to the left and $|_{k}D$ is the portion to the right of the knife.

Similarly, we let $\ov{D}^i$ be the subdiagram obtained by removing the first $i$ rows and $\un{D}_i$ the subdiagram contained in the first $i$ rows of $\Dbb(\lambda)$. These subdiagrams are constructed by placing the knife horizontally instead of vertically.

We let $D^{\ra}$ be the diagram obtained by replacing each cell $c_{i,j}$ in $D$ with $c_{i,j+1}$; that is, we ``push'' every cell one step to the right. Similarly, $D^{\searrow}$ is the diagram with every cell pushed one step down and to the right.

For a diagram $D$, let $i_0$ (resp. $j_0$) be the first row (resp. column) occupied by at least one cell in $D$, and set
\[ D^{\circ} = \{ c_{i-i_0+1, j-j_0+1}\ |\ c_{i,j}\in D \}. \]
Then $D^{\circ}$ is the diagram obtained by translating $D$ as far north and west as possible while remaining inside $\Dbb(\lambda)$.

\begin{lemma}[Slice and Push Inequality]\label{lem:s_and_p}
  If $(D;F)$ is a $\sqci$-diagram such that $D^{\circ}=\Dbb(\mu)$ for some nonempty partition $\mu\subseteq\lambda$, then for any $k$,
  \begin{eqnarray}
    \wt(D; F) \geq \wt(D|_{k};\ F\sqcup (|_{k}D)^{\ra}).\label{ineq:sap}
  \end{eqnarray}

\end{lemma}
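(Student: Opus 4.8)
The plan is to prove the inequality by induction on the number of columns of $\Dbb(\mu)$ that lie strictly to the right of the knife, i.e.\ on $\mu_1^{\pr}$-type data, or more precisely on the number of cells of $D$ in columns $> k$. If $D$ has no cells to the right of column $k$, then $|_kD=\emptyset$, $D|_k=D$, and both sides equal $\wt(D;F)$, so equality holds. For the inductive step, the key idea is to peel off just the \emph{rightmost occupied column} of $D$ (say column $\ell$), push only that column across the knife, and use Lemma~\ref{lem:square_relation} (the ``square relation'') to control the resulting change in weight. Concretely, I would first reduce to the case $k=\ell-1$ — the knife sitting immediately to the left of the last column — by observing that pushing the last column from position $\ell$ to $\ell-1$ and then re-slicing is an instance of the same inequality with one fewer column to the right, so a single telescoping reduces the general $k$ to the consecutive case.

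So the heart of the argument is: given a $\sqci$-diagram $(D;F)$ with $D^{\circ}=\Dbb(\mu)$ and the knife just left of the last column $\ell$ of $D$, show $\wt(D;F)\geq\wt(D|_{\ell-1};\,F\sqcup(|_{\ell-1}D)^{\ra})$. Write $D = D' \sqcup C$ where $C$ is the set of cells of $D$ in column $\ell$; since $D^{\circ}$ is a Young diagram, $C = \{c_{1,\ell},\dots,c_{h,\ell}\}$ is a contiguous top-justified run. Expanding $\wt(D;F)$ over the excited-move choices, I would split the sum according to the moves applied to the cells of $C$ versus the cells of $D'$. The cells of $C$ can each either stay or move to $c_{i+1,\ell+1}$; the order-preserving constraint couples $C$'s moves with those of $D'$ only through diagonals $X_{\ell-1}, X_\ell, X_{\ell+1}$. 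The right-hand side $\wt(D|_{\ell-1};F\sqcup(|_{\ell-1}D)^{\ra})$ is obtained by \emph{forcing} every cell of $C$ to column $\ell+1$ but now as a frozen square rather than a circle — i.e.\ it equals a partial sum of the same expansion where $C$ is pinned at its pushed-down-and-right image $C^{\ra}$... and here the main technical point is that pushing-right-and-freezing corresponds precisely to choosing, for each cell of $C$, the $c_{i,j+1}$ cell (not $c_{i+1,j+1}$), and $(|_{\ell-1}D)^{\ra}$ lands exactly on column $\ell+1$. To compare, apply Lemma~\ref{lem:square_relation} repeatedly: for each cell $c_{i,\ell}\in C$, the "stay at $c_{i,\ell}$" weight plus the "go to $c_{i+1,\ell+1}$" weight dominates twice... no — more carefully, the square relation lets me rewrite $\wt(F'\sqcup\{c_{i,\ell+1}\})$ as $\wt(F'\sqcup\{c_{i,\ell}\}) + \wt(F'\sqcup\{c_{i+1,\ell+1}\}) - \wt(F'\sqcup\{c_{i+1,\ell}\})$, so each "pushed square" term in the RHS is expressible via two circle-move terms from the LHS minus a correction; the corrections telescope because the cells of $C$ are contiguous and top-justified, and the leftover is manifestly nonnegative since hook lengths are positive.

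I expect the main obstacle to be bookkeeping the order-preserving constraint correctly when $C$'s cells interact with cells of $D'$ in the adjacent diagonals: one must verify that every excited configuration of $D|_{\ell-1}$ (with the extra squares $(|_{\ell-1}D)^{\ra}$) arises from a \emph{legal} excited configuration of $(D;F)$, and that no over- or under-counting occurs. This is where I would lean on the hypothesis $D^{\circ}=\Dbb(\mu)$: being a genuine Young diagram forces the rightmost column to be a prefix $\{c_{1,\ell},\dots,c_{h,\ell}\}$ and forces its row-neighbors in column $\ell-1$ and $\ell+1$ into a predictable shape, which is exactly what makes the square-relation corrections telescope cleanly rather than leaving an uncontrolled remainder. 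A secondary subtlety is the degenerate cases — when $C$ is a single cell, or when $h$ equals the number of rows so there is no "room below" — but in each such case one checks the inequality directly, with several terms simply vanishing. Once the consecutive-knife case is established, the telescoping reduction handles general $k$, completing the induction.
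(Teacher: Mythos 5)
Your overall architecture --- reduce to a knife immediately left of the rightmost occupied column, then induct, using Lemma~\ref{lem:square_relation} to convert each ``pushed square'' at $c_{i,\ell+1}$ into the two legal excited positions $c_{i,\ell}$, $c_{i+1,\ell+1}$ minus a correction at $c_{i+1,\ell}$ --- is a reasonable starting point and matches the paper's opening reductions. But there is a genuine gap exactly at the step you identify as the heart of the argument: the claim that ``the corrections telescope \ldots{} and the leftover is manifestly nonnegative since hook lengths are positive'' is not a termwise positivity statement, and no telescoping makes it one. In the paper's proof the analogous leftover is organized as a sum of two differences,
\[
\wt\bigl(\ov{D}^i;\ \un{D}_i\bigr)-\wt\bigl(\ov{D|_{k}}^i;\ \un{D}_i\sqcup(\ov{|_{k}D}^i)^{\ra}\bigr)
\qquad\text{and}\qquad
\wt\bigl(D|_{k};\ \cdots\bigr)-\wt\bigl(D|_{k-1};\ \cdots\bigr),
\]
each of which is nonnegative only because it is itself an instance of the Slice and Push Inequality applied to a strictly smaller diagram --- one obtained by a \emph{horizontal} slice, the other by moving the knife one column to the left. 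The inequality is genuinely self-referential, and your induction variable (the number of cells of $D$ strictly to the right of the knife) does not decrease along either of these recursive calls; the correct induction is on the total size $|\mu|$. The Remark following the lemma (that ``slice without push'' fails) is a warning sign that naive positivity arguments cannot absorb these correction terms.

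The second missing idea is the dichotomy between $\mu$ rectangular and non-rectangular. When $\mu$ is not a rectangle, no square-relation correction is needed at all: splitting the excited configurations according to whether the bottom-left corner cell $c_{i+1,j}$ is occupied yields the identity $\wt(D;\emptyset)=\wt(|_{j}D;\ D|_{j})+\wt(\un{D}_{i};\ (\ov{D}^{i})^{\searrow})$, the two pieces are disjoint from the sliced column, and the inductive hypothesis applies cleanly to each. The square relation enters only when $\mu$ is a rectangle, where those two pieces interfere; there it is used once, globally, in a three-term identity for $\wt(D|_{k};\ (|_{k}D)^{\ra})$, not once per cell of the last column with pairwise cancellation. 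Without this case split, and without identifying which smaller instances of the inequality dominate the correction terms, the proposal does not close.
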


\begin{proof}
  We first observe that the multi-subdiagram $F$ contributes the same multiplicative factor to each side of the inequality, so we may assume $F=\emptyset$. We also assume that $|_{k}D$ only contains cells in a single column, namely the $(k+1)$-st column. To deduce the inequality for an arbitrary choice of $k$, we may iteratively slice off and push the rightmost column several times.

  Let $\mu$ be the partition for which $D^{\circ}=\Dbb(\mu)$. We proceed by induction on $|\mu|$. If $D$ only contains cells in column $k+1$, then
  \[ \wt(D;\emptyset) > \wt(\emptyset;D) > \wt(\emptyset;D^{\ra}) = \wt(D|_{k};\ (|_{k}D)^{\ra}). \]

  Suppose $i+1$ is the last row occupied by $D$ and $j$ is the first column occupied by $D$. We claim that (Figure~\ref{fig:sap2})
  \[ \wt(D;\emptyset)\ =\ \wt(|_{j}D;\ D|_{j})\ +\ \wt(\un{D}_{i};\ (\ov{D}^{i})^{\searrow}) \]
  Indeed, for any diagram $D^{\pr}$ in the sum on the left, the same diagram appears in one of the two summands on the right depending on whether the cell $c_{i+1,j}$ is in $D^{\pr}$. 
  Now assume that $\mu$ is \emph{not} a rectangle. This assumption ensures that the slices $|_{k}D$ and $\ov{D}^{i}$ are disjoint. By induction, we may apply the inequality (\ref{ineq:sap}) to each of the summands on the right to obtain:
  \begin{align*}
    \wt(D;\emptyset)\ &\geq \wt\left(|_{j}D|_{k};\ D|_{j}\sqcup(|_{k}D)^{\ra}\right)\\
    &\hphantom{\geq} +\wt\left(\un{(D|_{k})}_i;\ (\ov{D}^{i})^{\searrow}\sqcup(|_{k}D)^{\ra}\right)\\
    &= \wt(D|_{k};\ (|_{k}D)^{\ra}).
  \end{align*}

  \begin{figure}
    
    \centering
    \includegraphics{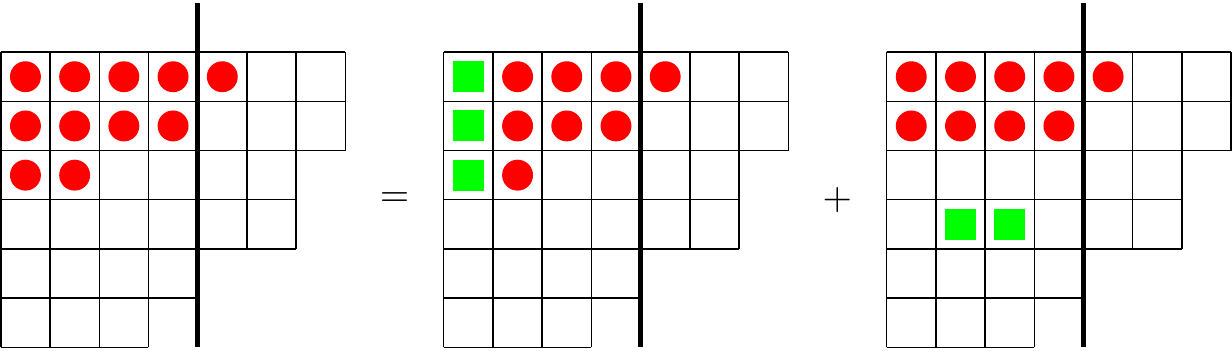}
    \caption{\label{fig:sap2}The non-rectangle case}
    
  \end{figure}

  Lastly, we assume that $\mu$ is a rectangle with at least two columns. Let $i$ (resp. $i^{\pr}$) be the first (resp. last) row occupied by a cell in $D$. Then by Lemma~\ref{lem:square_relation}, (first row of Figure~\ref{fig:sap3})
  \begin{align*}
    &\wt(D|_{k};\ (|_{k}D)^{\ra})\\
    &= \wt(D|_{k};\ (|_{k}D)^{\ra}\sqcup\{c_{i,k+1}\}\setm\{c_{i,k+2}\})\\
    &\hphantom{=}+ \wt(D|_{k};\ (|_{k}D)^{\ra}\sqcup\{c_{i^{\pr}+1,k+2}\}\setm\{c_{i,k+2}\})\\
    &\hphantom{=}- \wt(D|_{k};\ (|_{k}D)^{\ra}\sqcup\{c_{i^{\pr}+1,k+1}\}\setm\{c_{i,k+2}\})
  \end{align*}
  Applying a similar division as in the non-rectangle case, we have (second row of Figure~\ref{fig:sap3})
  \begin{align*}
    \wt(D;\emptyset) &= \wt\left(\ov{D}^i;\ \un{D}_i\right)\\
    &\hphantom{=}+\wt(D|_{k};\ (|_{k}D)^{\searrow})\\
    &= \wt\left(\ov{D}^i;\ \un{D}_i\right)\\
    &\hphantom{=}+\wt(D|_{k};\ (|_{k}D)^{\ra}\sqcup\{c_{i^{\pr}+1,k+2}\}\setm\{c_{i,k+2}\})
  \end{align*}
  In the same manner, (third row of Figure~\ref{fig:sap3})
  \begin{align*}
    &\wt(D|_{k};\ (|_{k}D)^{\ra}\sqcup\{c_{i,k+1}\}\setm\{c_{i,k+2}\})\\
    &= \wt\left(\ov{D|_{k}}^i;\ \un{D}_i\sqcup (\ov{|_{k}D}^i)^{\ra}\right)\\
    &\hphantom{=}+ \wt\left(D|_{k-1};\ (|_{k-1}D)^{\ra}\sqcup\{c_{i^{\pr}+1,k+1}\}\setm\{c_{i,k+2}\}\right)
  \end{align*}
  
  \begin{figure}

    \centering
    \includegraphics{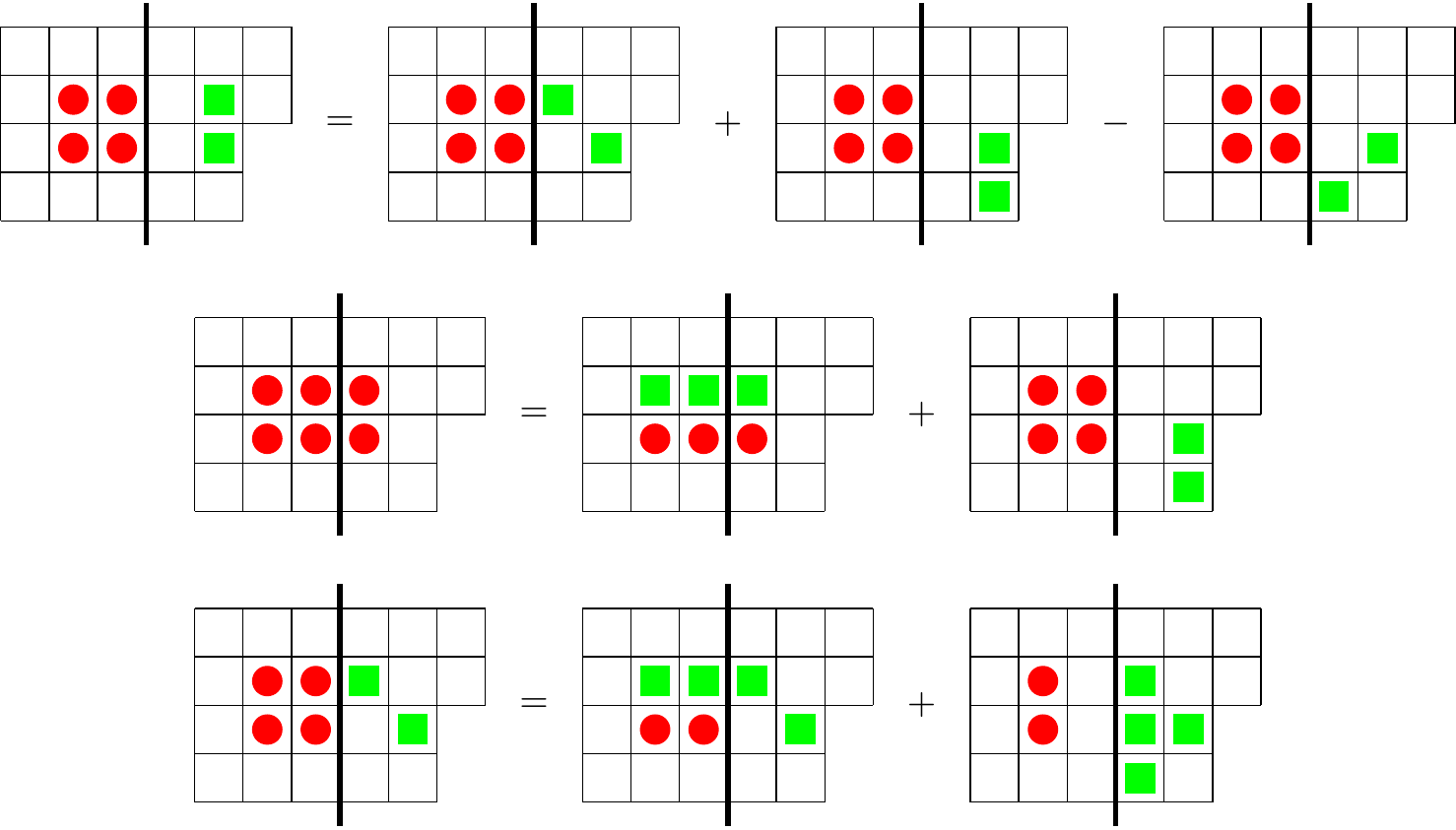}
    \caption{\label{fig:sap3}The rectangle case}
    
  \end{figure}

  Putting this together, we have
  \begin{align*}
    \wt(D;\emptyset) &- \wt(D|_{k};\ (|_{k}D)^{\ra})\\
    &= \wt\left(\ov{D}^i;\ \un{D}_i\right)\\
    &\hphantom{=}- \wt(D|_{k};\ (|_{k}D)^{\ra}\sqcup\{c_{i,k+1}\}\setm\{c_{i,k+2}\})\\
    &\hphantom{=}+ \wt(D|_{k};\ (|_{k}D)^{\ra}\sqcup\{c_{i^{\pr}+1,k+1}\}\setm\{c_{i,k+2}\})\\
    &= \wt\left(\ov{D}^i;\ \un{D}_i\right)\\
    &\hphantom{=}- \wt\left(\ov{D|_{k}}^i;\ \un{D}_i\sqcup (\ov{|_{k}D}^i)^{\ra}\right)\\
    &\hphantom{=}+ \wt(D|_{k};\ (|_{k}D)^{\ra}\sqcup\{c_{i^{\pr}+1,k+1}\}\setm\{c_{i,k+2}\})\\
    &\hphantom{=}- \wt\left(D|_{k-1};\ (|_{k-1}D)^{\ra}\sqcup\{c_{i^{\pr}+1,k+1}\}\setm\{c_{i,k+2}\}\right)
  \end{align*}

  In the last expression, the difference between the first two terms and the difference between the last two terms are both nonnegative by the inductive hypothesis. This completes the proof of the rectangle case.
\end{proof}

\begin{remark}
One may wonder whether the stronger inequality
\[\wt(D;F) \geq\wt(D|_{k};\ F\sqcup (|_{k}D))\]
 holds; i.e., if you ``slice'' but do not ``push.'' In fact, this inequality does not hold in general. For example, consider $\lambda = (4,3)$, $D = \{c_{1,1}, c_{1,2}\}$, $F = \emptyset$, and $k = 1$. We note that
\[
\wt(D; F) = 5 \cdot 4 + 5 \cdot 1 + 2 \cdot 1 = 27,
\]
while if $k=1$,
\[
\wt(D|_{k};\ F\sqcup (|_{k}D)) = 5 \cdot 4 + 2 \cdot 4 = 28.
\]
\end{remark}

\begin{example} \label{ex:sap_55432}

Here we give an illustration of the Slice and Push Inequality. Suppose that the partition $\lambda$ is $\lambda = (5,5,4,3,2)$. Consider the subset $D$ of cells of $\mathbb{D}(\lambda)$ given by $D = \{c_{1,1}, c_{1,2}, c_{2,1}, c_{2,2}\}$. In Figure \ref{9120ge4560} (left), we show the four cells of $D$ as the four circles. The set $D|_{1}$ consists of two cells: the two circles in Figure \ref{9120ge4560} (right). Once we slice $D$ by a knife between columns $1$ and $2$ into a left portion and a right portion, and then push the right portion one unit to the right, the moved right portion is now $\left(|_{1}D\right)^{\rightarrow}$. The set $\left(|_{1}D\right)^{\rightarrow}$ is shown as the two squares in Figure \ref{9120ge4560} (right).

The Slice and Push Inequality says that $\wt(D;\emptyset)$ is at least $\wt(D|_{1};\left(|_{1}D\right)^{\rightarrow})$; i.e., the weight of the $\sqci$-diagram on the left of Figure \ref{9120ge4560} is greater than or equal to the weight of the $\sqci$-diagram on the right.

Indeed, we can compute the two weights explicitly. We observe that $\wt(D;\emptyset) = 9 \cdot 8 \cdot 8 \cdot 7 + 9 \cdot 8 \cdot 8 \cdot 3 + 9 \cdot 8 \cdot 5 \cdot 3 + 9 \cdot 8 \cdot 5 \cdot 3 + 9 \cdot 5 \cdot 5 \cdot 3 + 7 \cdot 5 \cdot 5 \cdot 3 = 9120$, and that $\wt(D|_{1};\left(|_{1}D\right)^{\rightarrow}) = \left(9 \cdot 8 + 9 \cdot 5 + 7 \cdot 5 \right) \cdot 6 \cdot 5 = 4560$. In this example, the weight of the left diagram is exactly twice that of the right diagram.

\end{example}

\begin{figure}
\begin{tikzpicture}[scale = 0.5]
\def\w{1} 
\def\h{1} 
\def\r{0.3} 
\def\s{5/8} 
\def\drawacircle[#1,#2,#3]{
\filldraw[#3] (#1 +\w/2, #2 + \h/2) circle (\r);
}
\def\drawasquare[#1,#2,#3]{
\filldraw[#3] (#1 +\w/2 -\s/2, #2 + \h/2 -\s/2) rectangle (#1 +\w/2 +\s/2, #2 + \h/2 +\s/2);
}
\begin{scope}[shift={(-5,0)}]

\foreach \xa\ya\xb\yb in {0/0/2/5,2/1/3/5,3/2/4/5,4/3/5/5}
{
  \draw[step = 1] (\xa*\w,\ya*\h) grid (\xb*\w,\yb*\h);
}

\foreach \x in {0,1}{
  \foreach \y in {3,4}{
    \drawacircle[\x,\y,red]
  }
}
\end{scope}
\begin{scope}[shift={(+5,0)}]

\foreach \xa\ya\xb\yb in {0/0/2/5,2/1/3/5,3/2/4/5,4/3/5/5}
{
  \draw[step = 1] (\xa*\w,\ya*\h) grid (\xb*\w,\yb*\h);
}

\foreach \y in {3,4}{
  \drawacircle[0,\y,red]
}

\foreach \y in {3,4}{
  \drawasquare[2,\y,green]
}
\end{scope}
\end{tikzpicture}
\caption{\label{9120ge4560} Two $\sqci$-diagrams on the Young diagram of the partition $\lambda = (5,5,4,3,2)$.}
\end{figure}

For the rest of this section, we let $D=\Dbb(\mu)$ where $\Dbb(\lambda/\mu)$ is a ribbon. The weight of $(D;\emptyset)$ is equal to the excitation factor of $f^{\lambda/\mu}$. Likewise, the coefficients that appear in the polynomial
\[ E(t) = \sum_{d=0}^sC_{s-d}\prod_{i=1}^d(t+\alpha_i) \]
are the weights of some $\sqci$-diagrams.

\begin{lemma}\label{lem:ribbon_s_and_p}
  \[ C_{s-i} = \wt(\ov{D}^1|_i;\ (|_i\ov{D}^1)^{\ra})\prod_{j=1}^{s-i}h(c_{\lambda_{i+j+1}^{\pr},i+j+1}) \]
\end{lemma}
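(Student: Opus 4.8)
The plan is to identify the combinatorial object underlying each coefficient $C_{s-i}$ and then split its weight into a ``first-row'' contribution and a ``below-first-row'' contribution. Recall from Section~\ref{subsec:polynomials} that $C_d$ records the contribution to the excitation factor from excited diagrams of $\lambda/\mu$ having exactly $s-d$ cells in the first row, and that such an excited diagram has first row exactly $\{c_{1,1},\dots,c_{1,s-d}\}$. Setting $d = s-i$, the coefficient $C_{s-i}$ is the sum over excited diagrams with exactly $i$ cells in the first row of the product of hook lengths of the cells \emph{below} the first row (the factors $t+\alpha_j$ having been stripped off by the Newton-basis expansion). So the first step is to make precise the claim: $C_{s-i} = \sum_{D'} \prod_{c \in \ov{D'}} h(c)$, where the sum ranges over excited diagrams $D'$ of $\lambda/\mu$ whose first row is precisely $\{c_{1,1},\dots,c_{1,i}\}$, and $\ov{D'}$ denotes the part of $D'$ strictly below row $1$.

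Next I would recognize that sum as the weight of a $\sqci$-diagram. Since $\Dbb(\lambda/\mu)$ is a ribbon with $\lambda_1 = \lambda_2$, we have $\mu = (\lambda_1 - 1, \mu_2, \dots)$ and $D = \Dbb(\mu)$. An excited diagram of $\lambda/\mu$ with first row exactly $\{c_{1,1},\dots,c_{1,i}\}$ is obtained by freezing the first $i$ cells of row $1$ of $\mu$ in place, forcing the remaining $s-i$ cells of row $1$ of $\mu$ down to row $2$ (into columns $i+1,\dots,s$, i.e.\ pushed one step southeast), and then letting the rows of $\mu$ below row $1$ excite as in a general excited diagram. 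The key observation is that ``freeze the first $i$ cells, push the rest southeast by one, and excite everything below row 1'' is exactly the rule defining the weight of the $\sqci$-diagram on the base diagram $\ov{D}^1$ restricted appropriately: the circles are the cells of $\ov{D}^1$, and after slicing between columns $i$ and $i+1$ we push the right part one step right — matching $\wt(\ov{D}^1|_i;\ (|_i\ov{D}^1)^{\ra})$ — while the frozen top cells $c_{1,1},\dots,c_{1,i}$ and the forced-down cells contribute only fixed hook-length factors that can be pulled out. Here one must check that for a ribbon the second and lower rows of $\mu$, once the first row is fixed, move independently and in the southeast-by-at-most-one pattern governed by the order-preserving condition between consecutive diagonals, which is precisely the $\sqci$ movement rule. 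The cells $c_{1,1},\dots,c_{1,i}$ being frozen contribute no hook-length factor to $\ov{D'}$ at all, so the only pulled-out factor is the product of hook lengths of the $s-i$ cells forced into row $2$, which sit in columns $i+1,\dots,s$; for a ribbon these are exactly the cells $c_{2,i+1},\dots,c_{2,s}$, but written in the form $c_{\lambda'_{i+j+1},\,i+j+1}$ for $j = 1,\dots,s-i$ — this is the indexing by the bottom row of each relevant column.

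The main obstacle I anticipate is bookkeeping: verifying that the movement rule for excited diagrams of a ribbon shape, after conditioning on the first row, coincides exactly with the $\sqci$ rule on $\ov{D}^1$ (including the order-preserving conditions between adjacent diagonals), and that the cells forced into row $2$ are precisely $\{c_{2,i+1},\dots,c_{2,s}\}$ with hook lengths $h(c_{\lambda'_{i+j+1},\,i+j+1})$ for $j=1,\dots,s-i$. I would handle this by drawing on the ribbon structure: $\lambda_1=\lambda_2$ forces $\mu_1 = \lambda_1-1$, so row $1$ of $\mu$ has $s$ cells lying directly above the first $s$ cells of row $2$; freezing $i$ of them and the no-$2\times2$-block condition pins the rest down one row and over appropriately. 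Once the identification of the $\sqci$-diagram is in place, the factorization $C_{s-i} = \wt(\ov{D}^1|_i;\ (|_i\ov{D}^1)^{\ra}) \cdot \prod_{j=1}^{s-i} h(c_{\lambda'_{i+j+1},\,i+j+1})$ follows immediately by factoring the constant hook-length contribution of the forced cells out of every term in the defining sum.
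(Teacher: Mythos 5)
Your overall strategy is the same as the paper's: strip the first-row factors from the excited diagrams having exactly $i$ cells in row one, recognize what remains as the weight of a $\sqci$-diagram with circles $\ov{D}^1|_i$ and a frozen multiset of squares, and factor out the constant hook lengths. The identification of $C_{s-i}$ with such a weight, and the observation that the ribbon structure forces each cell to excite at most once and forces all of columns $i+1,\dots,s$ of $\mu$ to excite once the first row is cut at column $i$, are fine. The gap is in the bookkeeping step you yourself flagged as the main obstacle, and as written it is wrong rather than merely unverified. Two errors compound. First, the cells of $\ov{D}^1$ in columns $>i$ are forced one step \emph{southeast} in these excited diagrams, so the frozen part is $(|_iD)^{\searrow}$, not $(|_i\ov{D}^1)^{\ra}$ together with the images of the first-row cells; a cell $c_{k,j}$ lands at $c_{k+1,j+1}$, whose hook length differs from that of $c_{k,j+1}$. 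Second, you identify the forced-down first-row cells with the cells $c_{\lambda^{\pr}_{i+j+1},\,i+j+1}$ by asserting that row $2$ is ``the bottom row of each relevant column.'' This is false in general: $c_{1,j}$ lands at $c_{2,j+1}$ (column $j+1$, not $j$), while $c_{\lambda^{\pr}_{j+1},j+1}$ is the bottom cell of column $j+1$ of $\lambda$, and $\lambda^{\pr}_{j+1}>2$ whenever the ribbon continues below row $2$. For instance, with $\lambda=(4,4,3)$, $\mu=(3,2)$, $i=1$, the image of $c_{1,2}$ is $c_{2,3}$ with hook length $3$, whereas $c_{\lambda^{\pr}_3,3}=c_{3,3}$ has hook length $1$; factoring out the hook lengths of the row-$2$ cells would give the wrong answer.

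What actually makes the stated formula true is a column-by-column regrouping of one and the same total multiset. For each $j\in\{i+1,\dots,s\}$, column $j$ of $D$ has cells $c_{1,j},\dots,c_{\lambda^{\pr}_{j+1}-1,j}$, so pushed southeast it occupies $\{c_{2,j+1},\dots,c_{\lambda^{\pr}_{j+1},j+1}\}$, i.e.\ all of column $j+1$ of $\lambda$ below the first row. That multiset is equally the union of column $j$ of $\ov{D}^1$ pushed \emph{right} with the single bottom cell $c_{\lambda^{\pr}_{j+1},j+1}$. Hence
\[
(|_iD)^{\searrow}\ =\ (|_i\ov{D}^1)^{\ra}\sqcup\{c_{\lambda^{\pr}_{i+2},i+2},\ \dots,\ c_{\lambda^{\pr}_{s+1},s+1}\}
\]
as multisets, and only after this exchange does the product $\prod_{j=1}^{s-i}h(c_{\lambda^{\pr}_{i+j+1},i+j+1})$ factor out of every term. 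This regrouping is the actual content of the lemma; you need to state and verify it rather than conflate the two partitions of the frozen cells.
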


\begin{proof}
  The quantity $C_{s-i}\prod_{j=1}^i(t+\alpha_j)$ is the sum of the weights of excited diagrams $D^{\pr}$ with $i$ cells in the first row. Dividing by the weights of the cells in the first row, we have
  \[ C_{s-i} = \wt(\ov{D}^1|_i;\ (|_i D)^{\searrow}). \]
We may rewrite the weight of the $\sqci$-diagram as follows.
  \begin{align*}
    C_{s-i} &= \wt(\ov{D}^1|_i;\ (|_i \ov{D}^1)^{\ra}\sqcup\{c_{\lambda_{i+2}^{\pr},i+2},\ \ldots,\ c_{\lambda_{s+1}^{\pr},s+1}\})\\
    &= \wt(\ov{D}^1|_i;\ (|_i \ov{D}^1)^{\ra})\cdot h(c_{\lambda_{i+2}^{\pr},i+2})\cdots h(c_{\lambda_{s+1}^{\pr},s+1})
  \end{align*}
This completes the proof.
\end{proof}

The hook length of a cell at the bottom of its column satisfies
\[ h(c_{\lambda_i^{\pr},i}) = \begin{cases} h(c_{\lambda_{i+1}^{\pr},i+1}) + 1\ \mbox{if }\lambda_i^{\pr}=\lambda_{i+1}^{\pr}\\ 1\ \mbox{otherwise.}\end{cases} \]
Here, we set $\lambda_i^{\pr}=0$ if $i>\lambda_1$.

\begin{corollary}\label{cor:refine_ineq_coeff}
  If $\lambda_{i+1}^{\pr}>\lambda_{i+2}^{\pr}$ for some $i \le s$, then
  \[ \frac{C_{s-i}}{0!} \geq \frac{C_{s-i+1}}{1!} \geq \cdots \geq \frac{C_s}{i!}. \]
\end{corollary}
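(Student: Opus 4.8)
The plan is to derive Corollary~\ref{cor:refine_ineq_coeff} directly from Lemma~\ref{lem:ribbon_s_and_p} together with the Slice and Push Inequality (Lemma~\ref{lem:s_and_p}) and the recurrence for hook lengths of bottom-of-column cells recorded just above the corollary. First I would fix an index $i \le s$ with $\lambda_{i+1}^{\pr} > \lambda_{i+2}^{\pr}$ and, for each $\ell$ with $0 \le \ell \le i$, write out the expression for $C_{s-i+\ell}$ supplied by Lemma~\ref{lem:ribbon_s_and_p}:
\[
C_{s-i+\ell} = \wt\bigl(\ov{D}^1|_{i-\ell};\ (|_{i-\ell}\ov{D}^1)^{\ra}\bigr)\prod_{j=1}^{s-i+\ell}h(c_{\lambda_{i-\ell+j+1}^{\pr},\,i-\ell+j+1}).
\]
The strategy is to compare consecutive terms $C_{s-i+\ell}/\ell!$ and $C_{s-i+\ell+1}/(\ell+1)!$, i.e.\ to show
\[
(\ell+1)\,C_{s-i+\ell} \ \geq\ C_{s-i+\ell+1}
\]
for each $\ell$ with $0 \le \ell \le i-1$, after which the chain of inequalities in the corollary follows by transitivity.

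To prove the single-step inequality, I would split the two factors. On the product-of-hook-lengths side, passing from the index set starting at $i-\ell$ to the one starting at $i-\ell-1$ multiplies in one extra factor $h(c_{\lambda_{i-\ell}^{\pr},\,i-\ell})$ relative to a shifted comparison; the point is that, because of the connectedness/ribbon structure, the hook length $h(c_{\lambda_{i-\ell}^{\pr},\,i-\ell})$ of a bottom-of-column cell is at most $\ell+1$ — indeed by the recurrence $h(c_{\lambda_a^{\pr},a}) = h(c_{\lambda_{a+1}^{\pr},a+1})+1$ while columns stay at the same height and resets to $1$ when the height drops, and the hypothesis $\lambda_{i+1}^{\pr} > \lambda_{i+2}^{\pr}$ forces a height drop at column $i+1$, so counting backwards from column $i+1$ gives $h(c_{\lambda_{a}^{\pr},a}) \le (i+1)-a+1$, hence $h(c_{\lambda_{i-\ell}^{\pr},i-\ell}) \le \ell+2 \le \ldots$ — I will need to track the indexing carefully to land exactly on $\ell+1$. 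On the $\sqci$-diagram side, the Slice and Push Inequality applied with the knife moved one further column to the left gives
\[
\wt\bigl(\ov{D}^1|_{i-\ell};\ (|_{i-\ell}\ov{D}^1)^{\ra}\bigr) \ \geq\ \wt\bigl(\ov{D}^1|_{i-\ell-1};\ (|_{i-\ell-1}\ov{D}^1)^{\ra}\bigr),
\]
which is exactly Lemma~\ref{lem:s_and_p} with $D = \ov{D}^1|_{i-\ell}$ (a translate of a smaller ribbon-complement partition) sliced at $k = i-\ell-1$. Multiplying the hook-length bound by this weight inequality yields $(\ell+1)\,C_{s-i+\ell} \ge C_{s-i+\ell+1}$.

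The main obstacle I anticipate is bookkeeping, not conceptual difficulty: getting the column indices, the row-$1$ removal $\ov{D}^1$, and the off-by-one in the hook-length recurrence to align so that the extra hook factor is \emph{exactly} bounded by $\ell+1$ (and not merely by $\ell+2$ or some nearby quantity), since any slack there would break the clean factorial normalization. I would handle this by carefully re-deriving, from the displayed recurrence for $h(c_{\lambda_i^{\pr},i})$ and the hypothesis $\lambda_{i+1}^{\pr} > \lambda_{i+2}^{\pr}$, the closed form $h(c_{\lambda_a^{\pr},a}) = 1 + \#\{b : a \le b \le i,\ \lambda_b^{\pr} = \lambda_{b+1}^{\pr}\} \le i - a + 2$ for $a \le i+1$, and then plugging in $a = i-\ell$ — or, more cleanly, noting that in the ribbon case the hook lengths of the relevant bottom cells below the first row are precisely consecutive among the elements determined by the descent set, so the factor introduced at each step is at most one more than at the previous step. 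Once the hook-factor bound is pinned down, the rest is the immediate combination with Lemma~\ref{lem:s_and_p}, and Proposition~\ref{prop:ineq_coeff} itself is then the special case where one runs the argument from $i = s$ (using that $\lambda_{s+1}^{\pr} > \lambda_{s+2}^{\pr} = 0$ automatically for a ribbon, so the hypothesis is vacuously available at the top).
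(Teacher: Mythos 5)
Your proposal follows essentially the same route as the paper: Lemma~\ref{lem:ribbon_s_and_p} expresses each $C_{s-i+\ell}$ as a $\sqci$-weight times a product of bottom-of-column hook lengths, the Slice and Push Inequality shows the weight factor weakly decreases as the knife moves left, and the hook-length recurrence (seeded by $h(c_{\lambda_{i+1}^{\pr},i+1})=1$ from the hypothesis) bounds the extra hook factor; the paper just packages the telescoping as the single bound $C_{s-\ell}\leq C_{s-k}\prod_{j=1}^{k-\ell}h(c_{\lambda^{\pr}_{\ell+j+1},\ell+j+1})\leq C_{s-k}\,(i-\ell)!/(i-k)!$. The off-by-one you flagged resolves in your favor: the factor gained in passing from $C_{s-i+\ell}$ to $C_{s-i+\ell+1}$ sits at the bottom of column $i-\ell+1$ (not $i-\ell$), so the recurrence bounds it by exactly $\ell+1$, as needed.
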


\begin{proof}
Applying the Slice and Push Inequality with Lemma~\ref{lem:ribbon_s_and_p}, it follows that when $\ell<k$
\[ C_{s-\ell} \leq C_{s-k}\prod_{j=1}^{k-\ell}h(c_{\lambda^{\pr}_{\ell+j+1},\ell+j+1}). \]

  If $\lambda_{i+1}^{\pr}>\lambda_{i+2}^{\pr}$, then $h(c_{\lambda_{i+1}^{\pr},i+1})=1$. By induction, we have for $\ell<k\leq i$ that
\[ \prod_{j=1}^{k-\ell}h(c_{\lambda^{\pr}_{\ell+j+1},\ell+j+1}) \leq \frac{(i-\ell)!}{(i-k)!}. \]
This completes the proof.
\end{proof}

Proposition~\ref{prop:ineq_coeff} now follows from Corollary~\ref{cor:refine_ineq_coeff} by taking $i=s$. We note that by assumption, $\lambda_{s+2}^{\pr}=0$ and $\lambda_{s+1}^{\pr}\geq 2$, so the conditions of Corollary~\ref{cor:refine_ineq_coeff} are satisfied. We will also make use of the case in which $(|_i\ov{D}^1)$ is empty.

\begin{corollary}\label{cor:refine_eq_coeff}
  If $\lambda_{i+1}^{\pr}=\lambda_{s+1}^{\pr}=2$ for some $i\leq s$, then
  \[ \frac{C_0}{0!} = \frac{C_1}{1!} = \cdots = \frac{C_{s-i}}{(s-i)!}. \]
\end{corollary}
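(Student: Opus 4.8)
The plan is to mimic the proof of Corollary~\ref{cor:refine_ineq_coeff}, but now exploit the hypothesis $\lambda_{i+1}^{\pr}=2$ to make the Slice and Push Inequality degenerate into an \emph{equality}. The key observation is that when $\lambda_{i+1}^{\pr}=\lambda_{s+1}^{\pr}=2$, the diagram $|_i\ov{D}^1$ is empty: the cells of $\ov{D}^1$ lie in rows $2$ through $\lambda_{s+1}^{\pr}-1+1$... more precisely, $\ov{D}^1=\Dbb(\mu)$ minus its first row, and since $\mu$ has $\mu_j^{\pr}\le 2$ for $j>$ some index and in fact $\lambda_{i+1}^{\pr}=2$ forces $\mu$ to have at most $2-1=1$ cell in column $i+1$ after removing the shaded straight-shape top row — so $\ov{D}^1$ has no cell in columns $>i$. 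Hence $|_i\ov{D}^1=\emptyset$, and so $(|_i\ov{D}^1)^{\ra}=\emptyset$. First I would record this emptiness carefully, tracing through the ribbon convention $\mu_1=\lambda_1-1$ and the fact that for a ribbon the column heights $\lambda_j^{\pr}$ decrease from $\lambda_{s+1}^{\pr}$ down to small values, so that $\lambda_{i+1}^{\pr}=2$ pins down exactly which columns of $\ov{D}^1$ are occupied.

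Next I would revisit the chain of inequalities from the proof of Corollary~\ref{cor:refine_ineq_coeff}. There, the step $C_{s-\ell}\le C_{s-k}\prod_{j=1}^{k-\ell}h(c_{\lambda^{\pr}_{\ell+j+1},\ell+j+1})$ came from Slice and Push applied to $\wt(\ov{D}^1|_k;(|_k\ov{D}^1)^{\ra})$ with the knife moved from column $k$ to column $\ell$. But Slice and Push (Lemma~\ref{lem:s_and_p}) is only an inequality because the first case of its proof uses $\wt(D;\emptyset) > \wt(\emptyset;D) > \wt(\emptyset;D^{\ra})$ — strict drops coming from genuinely moving circles and from hook lengths strictly decreasing as one pushes right. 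When the pushed portion $|_i\ov{D}^1$ is \emph{empty}, there is nothing to push: $\wt(\ov{D}^1|_i;\emptyset)=\wt(\ov{D}^1;\emptyset)$ trivially, with no loss. So for $\ell<k\le i$ I would show that the Slice-and-Push step degenerates to an equality, giving $C_{s-\ell}=C_{s-k}\prod_{j=1}^{k-\ell}h(c_{\lambda^{\pr}_{\ell+j+1},\ell+j+1})$, and moreover — again using $\lambda_{i+1}^{\pr}=2$, hence $\lambda_j^{\pr}=2$ for all $2\le j\le i+1$ by monotonicity of the ribbon's column heights — the hook length recursion $h(c_{\lambda_j^{\pr},j})=h(c_{\lambda_{j+1}^{\pr},j+1})+1$ holds at every such column, forcing $\prod_{j=1}^{k-\ell}h(c_{\lambda^{\pr}_{\ell+j+1},\ell+j+1})=\tfrac{(i-\ell)!}{(i-k)!}$ exactly (not just $\le$). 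Combining, $\dfrac{C_{s-\ell}}{(i-\ell)!}=\dfrac{C_{s-k}}{(i-k)!}$ for all $\ell<k\le i$; setting $k=i$ and letting $\ell$ range over $0,\dots,i-1$ yields $\dfrac{C_{s-\ell}}{(i-\ell)!}=C_s/i!$ constant, but I actually want the reindexed statement. Relabeling via $\ell\mapsto s-\ell$ so that the equal ratios read $\dfrac{C_0}{0!}=\dfrac{C_1}{1!}=\cdots=\dfrac{C_{s-i}}{(s-i)!}$ requires just matching indices: $C_{s-\ell}$ with $\ell\in\{s-i,\dots,s\}$ corresponds to $C_0,\dots,C_i$ — wait, I would double-check the index bookkeeping here, since this is exactly where an off-by-one slip would hide.

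The main obstacle I anticipate is \textbf{the index bookkeeping and the emptiness claim}, not any new analytic content. Specifically: (a) verifying rigorously that $\lambda_{i+1}^{\pr}=2$ forces $|_i\ov{D}^1=\emptyset$ — this needs the ribbon assumption $\mu_1=\lambda_1-1$, connectivity, and the translation that $\ov{D}^1$ corresponds to the shape $\mu$ with its first row stripped; and (b) checking that the hypothesis $\lambda_{s+1}^{\pr}=2$ (which, combined with $\lambda_{i+1}^{\pr}=2$ and weak decrease of column heights, gives $\lambda_j^{\pr}=2$ for $2\le j\le i+1$... but careful, the column heights $\lambda_j^{\pr}$ need not be weakly decreasing on all of $2\le j\le s+1$ for a ribbon — for ribbons they \emph{are} weakly decreasing in $j$ once past the first column since there's no $2\times 2$ block, so $\lambda_2^{\pr}\ge\lambda_3^{\pr}\ge\cdots$; hence $\lambda_{i+1}^{\pr}=2$ and $\lambda_{s+1}^{\pr}=2$ with $i\le s$ do give $\lambda_j^{\pr}=2$ throughout $i+1\le j\le s+1$, but I need it for $2\le j\le i+1$, which follows from $\lambda_2^{\pr}\ge\cdots\ge\lambda_{i+1}^{\pr}=2$ together with $\lambda_j^{\pr}\le 2$ — is $\lambda_j^{\pr}\le 2$? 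Yes, for a ribbon every column has at most $2$ cells. So $\lambda_j^{\pr}\in\{1,2\}$, and $\lambda_{i+1}^{\pr}=2$ with weak decrease backwards gives $\lambda_j^{\pr}=2$ for $2\le j\le i+1$.) Once these two structural facts are nailed down, the equality version of Slice and Push along an empty pushed slice is immediate from the proof of Lemma~\ref{lem:s_and_p} (the inequality in its first displayed line becomes an equality $\wt(D;\emptyset)=\wt(D;\emptyset)$ when $|_kD=\emptyset$), and the rest is the same induction as in Corollary~\ref{cor:refine_ineq_coeff} with every $\le$ upgraded to $=$.
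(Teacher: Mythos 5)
Your key structural observation --- that $\lambda_{i+1}^{\pr}=\lambda_{s+1}^{\pr}=2$ forces $|_{i}\ov{D}^1=\emptyset$ --- is correct and is indeed the crux. But you then apply it to the wrong half of the coefficient sequence. Under Lemma~\ref{lem:ribbon_s_and_p}, the coefficient $C_{s-j}$ comes from the knife at column $j$, so the coefficients $C_0,\dots,C_{s-i}$ in the statement correspond to knife positions $j\in\{s,s-1,\dots,i\}$, whereas your chain of equalities ``for $\ell<k\le i$'' concerns $C_{s-i},\dots,C_s$, i.e.\ knife positions $\le i$. For those positions the pushed slice $|_{\ell}\ov{D}^1$ is in general \emph{nonempty}, so Slice and Push there is a genuine inequality; the equalities you assert would yield $\frac{C_{s-i}}{0!}=\cdots=\frac{C_s}{i!}$, which is false: in Example~\ref{ex:d35} the hypothesis holds with $i=s=3$, yet $C_1/1!=27$, $C_2/2!=21/2$, $C_3/3!=4$. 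The ``relabeling $\ell\mapsto s-\ell$'' at the end cannot repair this, since it would have to exchange two different sets of coefficients --- this is exactly the off-by-one you flagged but did not resolve. A second slip: you argue $\lambda_j^{\pr}\in\{1,2\}$ because ``for a ribbon every column has at most $2$ cells,'' but that bounds the columns of the skew shape $\lambda/\mu$, not of $\lambda$ itself (already $\lambda_1^{\pr}=3$ in Example~\ref{ex:d35}). What is true, and what is actually needed, is that $\lambda^{\pr}$ is weakly decreasing, so $\lambda_{i+1}^{\pr}=\lambda_{s+1}^{\pr}=2$ forces $\lambda_j^{\pr}=2$ for all $i+1\le j\le s+1$.

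Once the range is corrected, no ``equality version'' of Slice and Push is needed at all. For every knife position $j\ge i$ one has $\ov{D}^1|_j=\ov{D}^1$ and $|_j\ov{D}^1=\emptyset$, so Lemma~\ref{lem:ribbon_s_and_p} reads $C_{s-j}=\wt(\ov{D}^1;\emptyset)\prod_{j'=1}^{s-j}h(c_{\lambda^{\pr}_{j+j'+1},j+j'+1})$; since columns $i+1,\dots,s+1$ of $\lambda$ all have height $2$ and column $s+2$ is empty, the bottom hook lengths in columns $j+2,\dots,s+1$ are $s-j,\dots,2,1$, so the product equals $(s-j)!$ and $C_{s-j}/(s-j)!=\wt(\ov{D}^1;\emptyset)$ is independent of $j$. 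This is the paper's (one-line) proof: a direct application of Lemma~\ref{lem:ribbon_s_and_p}, with no invocation of the Slice and Push Inequality.
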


\begin{proof}
  By Lemma~\ref{lem:ribbon_s_and_p}, if $k\in\{0,1,\ldots,s-i\}$ then
  \begin{align*}
    C_{s-i-k} &= \wt(\ov{D}^1|_{i+k};\ (|_{i+k}\ov{D}^1)^{\ra})\prod_{j=1}^{s-i-k}h(c_{\lambda_{i+j+k+1}^{\pr},i+j+k+1})\\
    &= \wt(\ov{D}^1;\emptyset)\cdot (s-i-k)!
  \end{align*}
The corollary is proved.
\end{proof}

\section{Proof of the Conjecture of Diaz-Lopez, Harris, Insko, Omar, and Sagan}\label{sec:main_conj}

For the remainder of the paper, we fix a ribbon shape $\lambda/\mu$ with $\lambda_1=s+1,\ \mu_1=s$. Ribbons are connected diagrams, so we have $\lambda_{i+1} > \mu_i$ whenever $\mu_i \neq 0$. Let $\alpha_i=h(c_{1,i};\lambda)-1$ for all $i\in[s]$ and set $m=\alpha_1$. Let $E(t)$ be the polynomial function for the excitation factor of $\lambda^{(t)}/\mu$ with coefficient sequence $(C_0,\ldots,C_s)$ defined by
\[ E(t) = \sum_{d=0}^s C_{s-d}\prod_{i=1}^d(t+\alpha_i). \]
We let $z$ be a complex number such that $E(z) = 0$. Theorem~\ref{conj:main} is implied by the following two inequalities: $|z+m| \le m$ and $|z+1| \le m$. In Appendices~\ref{app:analytic} and~\ref{app:perturbation}, we will prove a complex analytic lemma and a polynomial perturbation lemma. In this section, we will use the Slice and Push Inequality and the two lemmas to prove the two desired inequalities.

\begin{theorem}\label{thm:main_left}
Let $z$ be a complex number such that $E(z) = 0$. Then, $|z+m| \le m$.
\end{theorem}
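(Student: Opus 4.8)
The plan is to reduce Theorem~\ref{thm:main_left} to an application of the complex analytic lemma proved in Appendix~\ref{app:analytic}, after rewriting $E$ in a convenient form and extracting from the ribbon structure the two arithmetic facts that lemma needs.

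First I would dispose of the ``trivial'' roots. If $z=-\alpha_i$ for some $i\in[s]$, then $|z+m|=|\alpha_1-\alpha_i|=m-\alpha_i\le m$ since $0\le\alpha_i\le\alpha_1=m$, and there is nothing to prove; so assume $z+\alpha_i\neq 0$ for all $i$. Next, substitute $w=z+m$ and record the identity $z+\alpha_i=w-\delta_i$ with $\delta_i:=m-\alpha_i$. Because the hook lengths in the first row of $\lambda$ strictly decrease we have $0=\delta_1<\delta_2<\cdots<\delta_s$, and because $\alpha_i=h(c_{1,i};\lambda)-1\ge 1$ we have $\delta_i\le m-1$. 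Then $E(z)=0$ is equivalent to
\[ G(w):=\sum_{d=0}^{s}C_{s-d}\prod_{i=1}^{d}(w-\delta_i)=0, \]
and the claim is $|w|\le m$. Dividing by $\prod_{i=1}^{s}(w-\delta_i)$, this is equivalent to $\sum_{e=0}^{s}C_e/Q_e(w)=0$, where $Q_e(w):=\prod_{i=s-e+1}^{s}(w-\delta_i)$ is the product over the $e$ largest nodes, and the $Q_e$ are nested: $Q_e(w)=(w-\delta_{s-e+1})Q_{e-1}(w)$.

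Then I would assemble the two inputs. The first is the coefficient bound from Proposition~\ref{prop:ineq_coeff}: the chain $C_0/0!\ge C_1/1!\ge\cdots\ge C_s/s!$ is equivalent to $0<C_e\le e\,C_{e-1}$ for every $e\in[s]$. The second is a node bound coming purely from the ribbon shape: writing $\lambda_1=\lambda_2=s+1$ and letting $\lambda'$ be the conjugate, a direct hook-length count gives $\alpha_j=s-j+\lambda'_j$ for $j\in[s]$, and since every column $j\le s+1=\lambda_1$ is occupied we get $\lambda'_j\ge 1$, hence $\alpha_j\ge s+1-j$. Rearranged, this says $\delta_j+(s+1-j)\le m$ for all $j\in[s]$; equivalently $\delta_{s-e+1}+e\le m$ for all $e\in[s]$, together with $\delta_1=0$. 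These are the ingredients fed to the complex analytic lemma of Appendix~\ref{app:analytic}, which then forces every root of $G$ into the closed disk of radius $m$ about the origin, i.e. $|z+m|\le m$.

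The hard part is the sharpness of the radius. A direct estimate starting from $C_0=-\sum_{e\ge 1}C_e/Q_e(w)$, using $|Q_e(w)|\ge\prod_{i=s-e+1}^{s}(|w|-\delta_i)$ together with $C_e\le e\,C_{e-1}$ and $\delta_{s-e+1}+e\le m$, telescopes to $|C_e/Q_e(w)|\le C_0\prod_{k=1}^{e}\frac{k}{(|w|-m)+k}$; summing the resulting hypergeometric series only yields $|w|\le m+2$. The loss of an additive constant is intrinsic to passing to moduli, because the factors $w-\delta_i$ shared among the nested $Q_e(w)$ have strongly correlated arguments. Recovering the exact radius $m$ therefore requires tracking that geometry --- for instance via an argument-principle or contour estimate, or a monotonicity argument along rays from the origin --- which is carried out in Appendix~\ref{app:analytic}; in this section I would only verify its hypotheses (as above) and invoke it.
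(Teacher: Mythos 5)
Your setup is fine --- the reduction to roots of $E$, the normalization $w=z+m$, and the identification of the two inputs (the coefficient chain $C_e\le e\,C_{e-1}$ from Proposition~\ref{prop:ineq_coeff} and an arithmetic lower bound on the $\alpha_j$) are all in the spirit of the paper. But there is a genuine gap at exactly the point you flag: you concede that the direct modulus estimate only yields $|w|\le m+2$ and then defer the recovery of the sharp radius $m$ to ``the complex analytic lemma of Appendix~\ref{app:analytic}.'' That appendix does not contain such a lemma. Lemma~\ref{l:geoball} is a statement about one specific polynomial, $P(z)=\bigl((z+1)\cdots(z+k)-k!\bigr)/z$, namely that $|P(z)|\ge(k-1)!$ on $\{|z+(k+1)|\ge k+1\}$; it is an ingredient, not a black box that bounds the roots of your $G(w)$ under your hypotheses. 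The mechanism that actually closes the gap lives in the body of the proof, and your proposal is missing it.

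Concretely, the paper's argument hinges on a case split you do not have. One first notes $\alpha_{s-i}\ge i+2$ (your bound $\alpha_{s-i}\ge i+1$, from $\lambda'_j\ge1$, is too weak: since $\lambda_1=\lambda_2=s+1$ every column $j\le s$ has $\lambda'_j\ge2$). If equality holds for all $i$ the claim is quoted from \cite[Theorem 4.4]{diaz2017descent}. Otherwise let $\kappa$ be minimal with $\alpha_{s-\kappa}\ge\kappa+3$. Corollary~\ref{cor:refine_eq_coeff} then gives the \emph{equalities} $C_0/0!=\cdots=C_\kappa/\kappa!$, which let one sum the head of the series exactly:
\[
1+\frac{1!}{z+2}+\cdots+\frac{\kappa!}{(z+2)\cdots(z+\kappa+1)}=\frac{(z+1)\cdots(z+\kappa+1)-(\kappa+1)!}{z(z+2)\cdots(z+\kappa+1)},
\]
and Lemma~\ref{l:geoball} (applied with $k=\kappa+1$) bounds the modulus of the relevant quotient below by $\tfrac{1}{\kappa+1}$ whenever $|z+m|>m$. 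The tail, controlled by Corollary~\ref{cor:refine_ineq_coeff} and the \emph{strict} inequalities $|z+\alpha_{s-i}|>i+3$ for $i\ge\kappa$, is then shown to be strictly less than $\tfrac{1}{\kappa+1}$, giving the contradiction. It is precisely this interplay --- exact summation of an initial run of equal normalized coefficients, plus the extra ``$+1$'' in the denominators of the tail past $\kappa$ --- that recovers radius $m$ rather than $m+2$. Without identifying $\kappa$ and exploiting the equality case, the correlated-argument geometry you allude to has no concrete substitute in your write-up, so the proof as proposed does not go through.
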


\begin{proof}
Recall that $\alpha_{s-i} \ge i+2$, for all $i = 0,1, \dots, s-1$. The case in which $\alpha_{s-i} = i+2$ for all $i$ was proved in \cite[Theorem 4.4]{diaz2017descent}. Let us suppose there exists some $i$ such that $\alpha_{s-i} \ge i+3$. Let $\kappa \ge 0$ be the smallest index such that $\alpha_{s-\kappa} \ge \kappa + 3$. By Corollary~\ref{cor:refine_eq_coeff},
\[
\frac{C_0}{0!} = \frac{C_1}{1!} = \cdots = \frac{C_{\kappa}}{\kappa!}.
\]
We argue that
\[
\frac{C_{\kappa+1}}{1!} \ge \frac{C_{\kappa+2}}{2!} \ge \cdots \ge \frac{C_s}{(s-\kappa)!}. \tag{$\ast$}
\]
If $\kappa = 0$, then ($\ast$) follows from Proposition~\ref{prop:ineq_coeff}. If $\kappa \ge 1$, then note that $\lambda'_{s - \kappa} > \lambda'_{s - \kappa + 1}$, and therefore, Corollary~\ref{cor:refine_ineq_coeff} gives
\[
\frac{C_{\kappa+1}}{0!} \ge \frac{C_{\kappa+2}}{1!} \ge \frac{C_{\kappa+3}}{2!} \ge \cdots \ge \frac{C_s}{(s-\kappa-1)!},
\]
which is a stronger inequality than ($\ast$).

If $z=0$ or $z=-\alpha_i$ for some $i$, it is easy to see that the desired inequality holds. Suppose $z \neq 0$ and $z+\alpha_i \neq 0$ for all $i$. We have
\begin{align*}
0 &= \frac{E(z)}{C_0\prod_{i=1}^s(z+\alpha_i)} \\
&=1 + \frac{1!}{z+2} + \frac{2!}{(z+2)(z+3)} + \cdots + \frac{\kappa!}{(z+2) \cdots (z+\kappa+1)} \\
&\hphantom{=}+ \frac{C_{\kappa+1}/C_0}{(z+2) \cdots (z+\kappa+1)} \cdot \Bigg( \frac{1}{z+\alpha_{s-\kappa}} + \frac{C_{\kappa+2}/C_{\kappa+1}}{(z+\alpha_{s-\kappa-1})(z+\alpha_{s-\kappa})} \\
&\hphantom{+ \frac{(\kappa+1)!}{(z+2) \cdots (z+\kappa+1)} \cdot \Bigg(}+ \cdots + \frac{C_s/C_{\kappa+1}}{(z+\alpha_1) \cdots (z+\alpha_{s-\kappa})} \Bigg).
\end{align*}
By induction, one may show that
\[
1 + \frac{1!}{z+2} + \cdots + \frac{\kappa!}{(z+2) \cdots (z+\kappa+1)} = \frac{(z+1)(z+2) \cdots (z+\kappa+1) - (\kappa+1)!}{z(z+2)(z+3) \cdots (z+\kappa+1)}.
\]
Inserting this into the previous equation gives:
\begin{align*}
&\left| \frac{(z+1)(z+2) \cdots (z+\kappa+1) - (\kappa+1)!}{z} \right| \\
&= \left| \frac{C_{\kappa+1}}{C_0}\Bigg( \frac{1}{z+\alpha_{s-\kappa}} + \cdots + \frac{C_s/C_{\kappa+1}}{(z+\alpha_1)\cdots(z+\alpha_{s-\kappa})} \Bigg) \right|.
\end{align*}

Using $C_{\kappa+1}/C_0\leq (\kappa+1)!$ along with ($\ast$) and the triangle inequality, we obtain
\begin{align*}
&\left| \frac{(z+1)(z+2) \cdots (z+\kappa+1) - (\kappa+1)!}{z(\kappa+1)!} \right| \\
&\le \frac{1!}{|z+\alpha_{s-\kappa}|} + \frac{2!}{|z+\alpha_{s-\kappa-1}| |z+\alpha_{s-\kappa}|} + \cdots + \frac{(s-\kappa)!}{|z+\alpha_1| |z+\alpha_2| \cdots |z+\alpha_{s-\kappa}|}.
\end{align*}

Suppose instead that $|z+m| > m$. Then, for $i = \kappa, \dots, s-1$, we have $|z+\alpha_{s-i}|\geq |z+m| - (m-\alpha_{s-i}) > i+3$. Therefore,
\begin{align*}
&\frac{1!}{|z+\alpha_{s-\kappa}|} + \frac{2!}{|z+\alpha_{s-\kappa-1}| |z+\alpha_{s-\kappa}|} + \cdots + \frac{(s-\kappa)!}{|z+\alpha_1| |z+\alpha_2| \cdots |z+\alpha_{s-\kappa}|} \\
&< \frac{1!}{\kappa+3} + \frac{2!}{(\kappa+3)(\kappa+4)} + \cdots + \frac{(s-\kappa)!}{(\kappa+3)(\kappa+4) \cdots (s+2)} < \frac{1}{\kappa+1}.
\end{align*}
On the other hand, $|z+(\kappa+2)| \ge |z+m| - (m - \kappa-2) > \kappa + 2$. Thus, Lemma~\ref{l:geoball} gives
\[
\left| \frac{(z+1)(z+2) \cdots (z+\kappa+1) - (\kappa+1)!}{z(\kappa+1)!} \right| \ge \frac{1}{\kappa+1},
\]
a contradiction. We have proved that $|z+m| \le m$.
\end{proof}

\begin{theorem}\label{thm:main_right}
Let $z$ be a complex number such that $E(z) = 0$. Then, $|z+1| \le m$.
\end{theorem}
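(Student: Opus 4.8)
The plan is to mirror the strategy used for Theorem~\ref{thm:main_left}, but now aiming for the bound $|z+1|\le m$ rather than $|z+m|\le m$. The two inequalities together pin $z$ into the lens-shaped region of Theorem~\ref{conj:main}, so the structure should be parallel: extract from $E(z)=0$ a rational-function identity, split off the ``equal'' initial segment of the Newton coefficients coming from Corollary~\ref{cor:refine_eq_coeff}, apply Proposition~\ref{prop:ineq_coeff} (or the sharper Corollary~\ref{cor:refine_ineq_coeff}) to bound the tail ratios $C_{d+1}/C_d$, and then argue by contradiction that $|z+1|>m$ forces the two sides of the identity to have incompatible magnitudes. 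First I would normalize: divide $E(z)=0$ by $C_0\prod_{i=1}^s(z+\alpha_i)$ to obtain
\[
0 = \sum_{d=0}^{s}\frac{C_d/C_0}{\prod_{i=1}^{s-d}(z+\alpha_i)},
\]
after disposing of the trivial cases $z=0$ and $z=-\alpha_i$ exactly as in the previous proof.

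Next I would exploit the divisor structure of the $\alpha_i$. Recall $\alpha_{s-i}\ge i+2$, and let $\kappa$ be the smallest index with $\alpha_{s-\kappa}\ge\kappa+3$; Corollary~\ref{cor:refine_eq_coeff} then gives $C_0/0!=C_1/1!=\cdots=C_\kappa/\kappa!$, while Proposition~\ref{prop:ineq_coeff} (sharpened by Corollary~\ref{cor:refine_ineq_coeff} when $\kappa\ge1$) gives the weak-decrease of $(C_{\kappa+1}/1!,\dots,C_s/(s-\kappa)!)$, the inequality $(\ast)$ from the proof of Theorem~\ref{thm:main_left}. Because $\alpha_1=\alpha_2=\cdots=\alpha_\kappa$ are forced to be $m, m-1,\dots$ down through $\kappa+2$ (this is exactly what $\kappa$ minimal encodes, via the hook-length recursion preceding Corollary~\ref{cor:refine_ineq_coeff}), the first $\kappa+1$ summands of the normalized identity telescope the same way:
\[
1+\frac{1!}{z+2}+\cdots+\frac{\kappa!}{(z+2)\cdots(z+\kappa+1)}=\frac{(z+1)(z+2)\cdots(z+\kappa+1)-(\kappa+1)!}{z(z+2)\cdots(z+\kappa+1)}.
\]
Wait — here I should double-check the indexing of the small $\alpha_i$'s against the variable substitution $N=z+\alpha_1$; the cells on the first column below the first row carry hook lengths equal to the elements of $I$, and the small $\alpha$'s correspond to ascents, so the telescoping block involves $z+2,\dots,z+\kappa+1$ precisely as above. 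This reduces the identity to
\[
\left|\frac{(z+1)(z+2)\cdots(z+\kappa+1)-(\kappa+1)!}{z(\kappa+1)!}\right|
\;\le\;\sum_{j=1}^{s-\kappa}\frac{j!}{\prod_{i=1}^{j}|z+\alpha_{s-\kappa-i+1}|},
\]
using $C_{\kappa+1}/C_0\le(\kappa+1)!$, $(\ast)$, and the triangle inequality, exactly as in Theorem~\ref{thm:main_left}.

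The crux is then the contradiction step, and this is where the argument must genuinely differ from the $|z+m|$ case. Assume $|z+1|>m$. I need two things: (i) a \emph{lower} bound on the left-hand side, and (ii) an \emph{upper} bound on the right-hand side, that are incompatible. For (i), the natural tool is again Lemma~\ref{l:geoball} (the geometric-ball lemma from Appendix~\ref{app:analytic}): since $|z+1|>m\ge\kappa+1$, and since $(z+1)+(z+2)+\cdots$ can be reorganized around the point $z+1$, one should get $|(z+1)(z+2)\cdots(z+\kappa+1)-(\kappa+1)!|\,/\,|z(\kappa+1)!|\ge 1/(\kappa+1)$ or a similar clean bound — though I expect the exact constant and the precise hypothesis of Lemma~\ref{l:geoball} will need the \emph{real part} or modulus of $z+1$ rather than of $z+m$, so the lemma may need to be invoked with a shifted center, or a companion estimate established. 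For (ii), from $|z+1|>m$ and $\alpha_{s-i}\le m$ one gets $|z+\alpha_{s-i}|\ge|z+1|-(m-\alpha_{s-i})>\alpha_{s-i}-(m-\alpha_{s-i})$... which unfortunately is \emph{not} obviously $\ge$ something like $i+3$, since $\alpha_{s-i}$ can be as small as $i+2$. \textbf{This is the main obstacle:} in the $|z+m|$ proof the inequality $|z+\alpha_{s-i}|\ge|z+m|-(m-\alpha_{s-i})>i+3$ worked because $|z+m|>m$ and $\alpha_{s-i}\ge i+3$ for $i\ge\kappa$; here the anchor is $z+1$, not $z+m$, so I would instead need something like $|z+\alpha_{s-i}|\ge\alpha_{s-i}$ directly, which should follow from $|z+1|>m\ge 1$ together with a convexity/geometry argument showing that if $z$ lies outside the disk of radius $m$ about $-1$ then it lies outside every disk of radius $\alpha_{s-i}$ about $-\alpha_{s-i}$ — plausible because all these disks pass through the origin and through $-2\alpha_{s-i}$, hmm, not quite nested. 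The cleanest fix is probably to prove the theorem only in the regime $\Re(z)<-1$ (the complement is handled by Theorem~\ref{thm:main_left}, since $|z+m|\le m$ already implies $\Re(z)\le 0$ and one checks the overlap), i.e. reduce to showing $\Re(z)\ge-1$, which is Theorem~\ref{conj:main}(\ref{conj:main2}) directly; then in that half-plane the estimate $|z+\alpha_{s-i}|\ge\alpha_{s-i}-1\ge i+1$ may suffice to close the tail sum below $1/(\kappa+1)$, and a final application of Lemma~\ref{l:geoball} (or the Polynomial Perturbation Lemma of Appendix~\ref{app:perturbation}) gives the contradiction. I would spend most of the write-up making this reduction precise and verifying that the appendix lemmas apply with the shifted center.
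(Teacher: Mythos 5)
There is a genuine gap here, and you in fact located it yourself: the contradiction step of the Theorem~\ref{thm:main_left} argument does not transfer. There the hypothesis $|z+m|>m$ yields $|z+\alpha_{s-i}|\ge |z+m|-(m-\alpha_{s-i})>i+3$ because the anchor $-m$ lies weakly to the left of every center $-\alpha_{s-i}$; with the anchor at $-1$ the triangle inequality only gives $|z+\alpha_{s-i}|>m-\alpha_{s-i}+1$, which is useless when $\alpha_{s-i}$ is close to $m$. Your proposed repair does not close the gap either: in the half-plane $\Re(z)<-1$ the estimate $|z+\alpha_{s-i}|\ge\alpha_{s-i}-1$ is simply false (take $z$ near $-\alpha_{s-i}$), and the suggested reduction to ``$\Re(z)\ge -1$'' conflates the root $z$ of $E$ with the root $z_0=z+m$ of $d_I$; the disk condition $|z+1|\le m$ is strictly stronger than the half-plane bound it is meant to deliver, so one cannot restrict attention to a half-plane without losing the statement. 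The route through Lemma~\ref{l:geoball} therefore does not work for this half of the theorem.

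The paper's proof is the application of the Polynomial Perturbation Lemma that you mention only parenthetically in your last sentence, and it requires none of the telescoping or the case split on $\kappa$. Substituting $w=z+1$ and setting $a_j=\alpha_{s-j+1}-1$ and $g_d=C_d/C_{d-1}$, one checks directly from the Newton-basis expansion that
\[
\frac{E(w-1)}{C_0}\;=\;\prod_{j=1}^{s}(w+a_j)\;+\;\sum_{d=1}^{s}g_1\cdots g_d\,(w+a_{s})(w+a_{s-1})\cdots(w+a_{d+1}),
\]
which is exactly the polynomial $P$ of Lemma~\ref{l:pp}: the $a_j$ are strictly increasing positive integers (the first-row hook lengths strictly decrease and $\alpha_s\ge 2$), and Proposition~\ref{prop:ineq_coeff} gives precisely $g_d\le d$. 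The lemma then yields $|w|\le a_s+1=\alpha_1=m$, i.e.\ $|z+1|\le m$. In other words, all the analytic work for this bound is packaged in Appendix~\ref{app:perturbation}; what your write-up is missing is the change of variables $w=z+1$ and the observation that Proposition~\ref{prop:ineq_coeff} is exactly the hypothesis $g_d\le d$ of that lemma.
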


\begin{proof}
When $s = 0$, the result holds vacuously. Assume $s \ge 1$. This theorem is a consequence of the Polynomial Perturbation Lemma (Lemma~\ref{l:pp}). We use the lemma for when $(g_1, g_2, \dots, g_k)$ is $\left( \frac{C_1}{C_0}, \frac{C_2}{C_1}, \dots, \frac{C_s}{C_{s-1}} \right)$ and $(a_1, a_2, \dots, a_k)$ is $(\alpha_s - 1, \alpha_{s-1}-1, \dots, \alpha_1 -1 )$. From Proposition~\ref{prop:ineq_coeff}, we have that for all $i$, $g_i \le i$, and therefore, the lemma applies. We have $|z+1| \le m$.
\end{proof}

\section*{Acknowledgments}

The first author gratefully acknowledges the support from the MIT Mathematics Department Strang Fellowship. The second author was supported by the National Science Foundation under Grant No. DMS-1440140 while in residence at the Mathematical Sciences Research Institute in Berkeley, California. We thank Bruce Sagan for reading the paper and providing many suggestions and corrections. The second author also thanks Bruce for introducing the conjecture to him, and he thanks Olivier Bernardi, Ira Gessel, and Bruce Sagan for their discussion with him at the Brandeis Combinatorics Seminar. We thank Wijit Yangjit for his discussion, which led to the proof of the Polynomial Perturbation Lemma. We thank Andrew Cai for insightful discussions, and for the easy description of the trivial part of the descent polynomial as shown in Subsection \ref{subsec:computing}. We thank Igor Pak for his suggestions on an earlier version of the paper. We used the programming languages python and R to generate examples and perform computations. We also used Wolfram Alpha to perform calculations.

\bibliographystyle{alpha}
\bibliography{bib_roots}{}

\appendix

\bigskip

\section{A Complex Analytic Lemma}\label{app:analytic}
In this section, we prove a lemma used in the proof of Theorem~\ref{thm:main_left}.

Let $k$ be a fixed positive integer. Consider the meromorphic function
\[
P(z) := \frac{(z+1)(z+2) \cdots (z+k) - k!}{z} \in \mathbb{C}[z].
\]

Since the numerator is divisible by $z$, the function $P(z)$ may be regarded as a polynomial function on the whole complex plane. We hereafter refer to $P(z)$ as a polynomial.

\begin{lemma}\label{lem:mero_roots}
  If $z_0$ is a root of $P(z)$, then $|z_0+1|\leq k$ and $|z_0+k|<k$.
\end{lemma}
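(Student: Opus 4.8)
The plan is to study the polynomial $Q(z) := (z+1)(z+2)\cdots(z+k) - k!$ directly, since $P(z) = Q(z)/z$ and $Q(0) = 0$, so the roots of $P$ are exactly the nonzero roots of $Q$. I would write $Q(z)$ in terms of the shifted variable $w = z + \tfrac{k+1}{2}$, which centers the arithmetic progression $\{1,2,\dots,k\}$ symmetrically about the origin; equivalently, think of $Q$ as a perturbation of the polynomial $R(z) = \prod_{j=1}^k (z+j)$ whose roots are $-1,-2,\dots,-k$, all lying in both disks $|z+1|\le k$ and $|z+k|<k$ (the second inclusion is strict for $-1,\dots,-k$ since $|{-j}+k| = k-j < k$, and $|{-1}+1| = 0$, etc.). The idea is that subtracting the constant $k!$ cannot push a root outside these disks.

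First I would prove $|z_0 + 1| \le k$. Suppose $|z_0+1| > k$. Then I want to show $|R(z_0)| > k!$, which would contradict $R(z_0) = k!$. Indeed $|z_0 + j| = |(z_0+1) + (j-1)| \ge |z_0+1| - (j-1) > k - (j-1) = k - j + 1$ for each $j = 1,\dots,k$ — but one must be careful: this lower bound is only useful when $k - j + 1 > 0$, and it degrades for large $j$. So instead I would argue more carefully using the geometry: if $\Re(z_0 + 1) > 0$ is large, or if $z_0+1$ is far out in some direction, then the product of distances $\prod |z_0+j|$ from $z_0$ to the points $-1,\dots,-k$ exceeds $k!$ because $z_0$ is farther from the "centroid" $-\tfrac{k+1}{2}$ than the arithmetic-mean configuration allows. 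A clean way: by the AM–GM-type estimate, $\prod_{j=1}^k |z_0+j| \ge \left(\text{something}\right)$; alternatively, note $R(z) - R(-1-t)$ comparisons. The cleanest route is probably: the function $z \mapsto \log|R(z)|$ is harmonic outside the zeros, and on the circle $|z+1| = k$ its minimum... — actually the most robust approach is to directly bound $|z_0+j| \ge |z_0 + 1| - (j-1)$ when this is positive and pair up terms $j$ and $k+1-j$, using that $|z_0+j|\cdot|z_0+(k+1-j)| \ge$ (product of the real progression values) by convexity. I expect the main obstacle to be making this pairing estimate rigorous in the complex setting.

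Next, for the strict bound $|z_0 + k| < k$: I would similarly suppose $|z_0 + k| \ge k$ and derive $|R(z_0)| > k!$ or handle the boundary case. Here $|z_0 + j| = |(z_0+k) - (k-j)| \ge |z_0+k| - (k-j) \ge k - (k-j) = j$ for $j=1,\dots,k$ when $|z_0+k|\ge k$, giving $|R(z_0)| = \prod|z_0+j| \ge \prod_{j=1}^k j = k!$ with equality only if every inequality is tight, i.e. $|z_0+k| = k$ and $z_0 + k$ lies on the ray making each $(z_0+k) - (k-j)$ collinear and positively oriented, forcing $z_0 + k = k$, i.e. $z_0 = 0$ — but $z_0 = 0$ is not a root of $P$. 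Hence if $|z_0+k|\ge k$ and $z_0 \ne 0$ we get $|R(z_0)| > k!$, contradicting $R(z_0) = k!$. This yields the strict inequality cleanly, and I'd present this half first since it is the more transparent one.

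Finally I would reconcile the two halves and note that the first bound $|z_0+1|\le k$ follows by essentially the same triangle-inequality/collinearity argument: $|z_0 + j| = |(z_0+1)+(j-1)|$, and if $|z_0+1| > k$ one shows $\prod_j |z_0+j| > k!$ by observing that moving all the "source points" $-1,\dots,-k$ to one side and applying $|z_0+j|\ge |z_0+1|-(j-1)$ for the small $j$ while using $|z_0+j| \ge |z_0+1| - (j-1)$ could be negative — so I would instead pair index $j$ with index $k+1-j$ and use $|z_0+j||z_0+k+1-j| \ge \bigl||z_0+\tfrac{k+1}{2}|^2 - (\tfrac{k+1}{2}-j)^2\bigr|$ together with $|z_0 + \tfrac{k+1}{2}| \ge |z_0+1| - \tfrac{k-1}{2} > \tfrac{k+1}{2}$, making each paired product at least the corresponding product $j(k+1-j)$ from the real progression, hence $|R(z_0)| \ge k!$, and strict somewhere, a contradiction. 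The pairing/convexity step is the crux; everything else is the triangle inequality and bookkeeping.
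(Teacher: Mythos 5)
Your treatment of the second inequality is exactly the paper's: from $|z_0+k|\geq k$ you get $|z_0+j|\geq j$ for all $j$, hence $|\prod_j(z_0+j)|\geq k!$, with strictness unless $z_0+k$ is a positive real of modulus $k$, i.e.\ $z_0=0$, which is excluded since $P(0)=\sum_{i=1}^k k!/i\neq 0$ (do state this; you assert it without justification). For the first inequality, however, you talked yourself out of the correct and much simpler argument. Your worry that the bound $|z_0+j|\geq |z_0+1|-(j-1)>k-j+1$ ``degrades for large $j$'' is unfounded: for $j=1,\dots,k$ the lower bounds $k-j+1$ run through $k,k-1,\dots,1$, all positive, and their product is exactly $k!$, so $|\prod_j(z_0+j)|>k!$ and hence $P(z_0)\neq 0$ immediately. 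This two-line triangle-inequality argument is precisely the paper's proof (modelled on \cite[Theorem 4.4]{diaz2017descent}). The pairing argument you sketch in your last paragraph --- $|z_0+j|\,|z_0+k+1-j|=\bigl|(z_0+\tfrac{k+1}{2})^2-(\tfrac{k+1}{2}-j)^2\bigr|\geq |z_0+\tfrac{k+1}{2}|^2-(\tfrac{k+1}{2}-j)^2> j(k+1-j)$ --- is in fact valid once you note $|z_0+\tfrac{k+1}{2}|>\tfrac{k+1}{2}$ and handle the unpaired middle factor when $k$ is odd, so your proof can be completed; but it is a needless detour, and the hedging about harmonic functions, AM--GM, and the maximum principle should be cut. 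One more small point: you should dispose of $k=1$ separately (there $P\equiv 1$ and the lemma is vacuous), since otherwise statements like ``$|z_0+k|<k$'' read oddly.
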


\begin{proof}
  This proof is similar to that of \cite[Theorem 4.4]{diaz2017descent}.

  When $k=1$, we have $P(z)=1$ for all $z$, so we may assume $k>1$. The polynomial $P(z)$ has a nonzero constant term $\sum_{i=1}^k\frac{k!}{i}$, so $0$ is not a root of $P(z)$.

  If $|z_0+1|>k$ then $|z_0+i|>k+1-i$ for all $i\in\{1,\ldots,k\}$. This implies
\[
|(z_0+1)\cdots(z_0+k) - k!|\geq|(z_0+1)\cdots(z_0+k)|-k!>0,
\]
so $P(z_0)\neq 0$.

  Similarly, if $|z_0+k|>k$, then $|z_0+i|>i$ for all $i\in\{1,\ldots,k\}$. Again, we have $P(z_0)\neq 0$.

  Finally, if $|z_0+k|=k$ and $z_0\neq 0$ then $|z_0+i|>i$ for $i\in\{1,\ldots,k-1\}$, and we again deduce that $P(z_0)$ is nonzero.
\end{proof}

Lemma~\ref{lem:mero_roots} implies that $\frac{1}{P(z)}$ is holomorphic in the domain $\{z \in \mathbb{C} : |z+(k+1)| > k+1\}$. The \emph{maximum modulus principle} states that the modulus of any non-constant holomorphic function does not have a local maximum in any open, connected domain. As $\frac{1}{P(z)}$ vanishes at infinity, the maximum value of $\frac{1}{|P(z)|}$ in the domain $\{z:\ |z+(k+1)|\geq k+1\}$ is attained in the compact subset $\{z:\ M\geq |z+(k+1)|\geq k+1\}$ for any sufficiently large value of $M$. Hence, this maximum value is achieved at the boundary $\{z:\ |z+(k+1)|=k+1\}$.

\begin{lemma} \label{l:geoball}
If $z$ is a complex number such that $|z+(k+1)| \ge k+1$, then $|P(z)| \ge (k-1)!$.
\end{lemma}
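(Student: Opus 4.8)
The plan is to follow the strategy outlined in the discussion preceding the statement: reduce to the boundary circle via the maximum modulus principle, then estimate $|P|$ there directly. By Lemma~\ref{lem:mero_roots} every root $z_0$ of $P$ satisfies $|z_0+k|<k$, hence $|z_0+(k+1)|<k+1$, so $1/P$ is holomorphic on $\{|z+(k+1)|\ge k+1\}$ and vanishes at infinity; as observed above, $\inf|P|$ over this domain is then attained on the circle $\Gamma:=\{|z+(k+1)|=k+1\}$, so it suffices to prove $|P(z)|\ge(k-1)!$ for $z\in\Gamma$. The cases $k=1$ ($P\equiv1$) and $k=2$ ($P(z)=z+3$, so $|P|=k+1$ on $\Gamma$) are trivial; assume $k\ge3$.

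On $\Gamma$ I would parametrize $z=(k+1)(e^{i\theta}-1)$, use that $P$ has real coefficients to restrict to $\theta\in[0,\pi]$ (so $\Im z\ge0$), and record the exact identities $\Re z=-\tfrac{|z|^2}{2(k+1)}\le 0$ and, for $1\le j\le k$,
\[
|z+j|^2=j^2+\frac{k+1-j}{k+1}\,|z|^2\ \ (\ge j^2),\qquad \Im z=|z|\cos(\theta/2).
\]
Writing $z+j=\rho_je^{i\phi_j}$ with $\phi_j=\arg(z+j)\in[0,\pi)$, $\Phi:=\sum_{j=1}^k\phi_j$, and recalling $zP(z)=\prod_j(z+j)-k!$, one gets
\[
|zP(z)|^2=\Bigl(\prod_{j=1}^k\rho_j-k!\Bigr)^2+4\,k!\Bigl(\prod_{j=1}^k\rho_j\Bigr)\sin^2(\Phi/2),
\]
while the lone point $z=0$ of $\Gamma$ is handled by $|P(0)|=k!H_k\ge(k-1)!$ (here $H_k=\sum_{j\le k}1/j$). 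Since both summands are nonnegative and $\prod_j\rho_j\ge k!$, it is enough to verify that at each $z\in\Gamma$ at least one of
\[
\text{(I)}\quad\prod_{j=1}^k\rho_j\ge k!+(k-1)!\,|z|,\qquad\qquad
\text{(II)}\quad 2k!\,|\sin(\Phi/2)|\ge(k-1)!\,|z|
\]
holds.

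For (I) I would use $\prod(1+x_j)\ge1+\sum x_j$ on $\rho_j^2=j^2\bigl(1+\tfrac{(k+1-j)|z|^2}{(k+1)j^2}\bigr)$ to get $\prod_j\rho_j^2\ge(k!)^2\bigl(1+\tfrac{|z|^2}{k+1}S_k\bigr)$ with $S_k=\sum_j\tfrac{k+1-j}{j^2}=(k+1)H_k^{(2)}-H_k$; comparing with $\bigl(k!+(k-1)!|z|\bigr)^2$ shows (I) holds once $|z|\ge\tau_k:=\bigl(\tfrac{S_k}{k+1}-\tfrac1{k^2}\bigr)^{-1}\cdot\tfrac2k=\Theta(1/k)$. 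For (II), when $|z|$ is small one has $\phi_j=\arctan\!\bigl(\tfrac{\Im z}{j+\Re z}\bigr)\ge\arctan\!\bigl(\tfrac{\Im z}{j}\bigr)\ge\tfrac23\tfrac{\Im z}{j}$ (using $\Re z\le0$ and $\tfrac{\Im z}{j}\le1$), hence $\Phi\ge\tfrac23H_k\Im z\ge\tfrac13 H_k|z|$, while $\phi_j\le\tfrac{\Im z}{j+\Re z}$ gives $\Phi\le O(\log k)\,|z|$; so for $|z|\le\rho_k$, where $\rho_k=\Theta(1/\log k)$ is chosen to force $\Phi\le\pi$, we have $|\sin(\Phi/2)|\ge\Phi/\pi\ge\tfrac{H_k|z|}{3\pi}$, and (II) follows from $kH_k\ge\tfrac{3\pi}{2}$, valid for $k\ge3$ (for $k\in\{3,4\}$ the bound on $\Phi$ already gives $\Phi\le\pi$ on all of $\{|z|\le1\}$, so one may take $\rho_k=1$ there). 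Since a short explicit computation with the harmonic sums $H_k,H_k^{(2)}$ gives $\tau_k<\rho_k$ for every $k\ge3$, regimes (I) and (II) together cover all of $\Gamma$, completing the proof.

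The delicate part is precisely this last overlap: one must pin down the constants in $\tau_k$ and $\rho_k$ carefully enough to see that the $\Theta(1/k)$ threshold for the modulus term (I) always lies below the $\Theta(1/\log k)$ threshold for the phase term (II); the margin is slim for the smallest $k\ge3$, so the harmonic-sum inequalities must be tracked honestly (alternatively, the handful of small $k$ can be disposed of by direct evaluation of the explicit polynomial $P$). Everything else — the identities on $\Gamma$, the decomposition of $|zP(z)|^2$, and the $\arctan$ estimate — is routine.
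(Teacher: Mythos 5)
Your argument is correct, and its skeleton is the same as the paper's: reduce to the circle $|z+(k+1)|=k+1$ by the maximum modulus principle, then split into a large-$|z|$ regime controlled by the modulus of $W:=\prod_{j=1}^k(z+j)$ and a small-$|z|$ regime controlled by its argument. But the packaging differs in two genuine ways, both to your advantage. First, where the paper bounds $|W-k!|$ below by $|W|-k!$ in one case (for $|z|\ge 2/k$) and by $\Im(W)=|W|\sin\theta$ in the other (for $|z|<2/k$), you use the exact identity $|W-k!|^2=\bigl(|W|-k!\bigr)^2+4k!\,|W|\sin^2(\Phi/2)$, which lets the two regimes overlap rather than partition; the whole proof then reduces to checking $\tau_k<\rho_k$. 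That check does go through: $\tau_k=\frac{2/k}{H_k^{(2)}-H_k/(k+1)-1/k^2}$ is about $0.84,\,0.53,\,0.38$ for $k=3,4,5$ and is $O(1/k)$ thereafter, while $\rho_k\ge 1$ for these small $k$ (e.g.\ for $k=5$, $|z|\le1$ gives $\Phi\le\sum_j\arctan\bigl(\tfrac{1}{j-1/12}\bigr)\approx 2.09<\pi$) and decays only like $\pi/H_k$ — so the margin you worried about is actually comfortable, not slim. Second, because your phase condition only requires $\sin(\Phi/2)\ge|z|/(2k)$ rather than the paper's much stronger $\sin\theta\ge|z|$, your estimates close uniformly for all $k\ge3$; this eliminates the paper's separate hand-verification of $k=3,\dots,7$ via explicit expansions of $P$ in a shifted variable $w$, which is a real simplification. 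Two small points to make explicit when writing this up: the step $\Phi\ge\tfrac13H_k|z|$ uses $\Im z\ge\tfrac12|z|$, which follows from $\Im z=|z|\sqrt{1-|z|^2/(4(k+1)^2)}$ once $|z|\le1$; and condition (II) suffices only because $\prod_j\rho_j\ge k!$, which you should cite from the identity $\rho_j^2=j^2+\tfrac{k+1-j}{k+1}|z|^2$.
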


It is easy to see that the inequality holds when $z = 0$. From now on, assume $z \neq 0$. Let us suppose $k \ge 8$ and deal with small values of $k$ later. As stated above, we can assume $|z+(k+1)| = k+1$.

Without loss of generality, assume $\Im(z)\ge 0$ and write $z = -a+bi$, where $a, b \ge 0$. Note that $(k+1)^2 - |z|^2 = (k+1-a)^2 - a^2$, so
\begin{equation}\label{eq:a}
a = \frac{|z|^2}{2(k+1)}.  
\end{equation}

We consider two cases.

\underline{Case 1.} $|z| \ge \frac{2}{k}$.

We have
\begin{align*}
\left| (z+1) \cdots (z+k) \right| &= \sqrt{ \prod_{j=1}^k \left( (j-a)^2 + b^2 \right) } \\
&= k! \cdot \sqrt{\prod_{j=1}^k \left( \left( 1 - \frac{a}{j} \right)^2 + \left( \frac{b}{j} \right)^2 \right)}.
\end{align*}
Note that $\left( 1 - \frac{a}{j} \right)^2 + \left( \frac{b}{j} \right)^2 > 1$ because $|z+j| > j$. For $x_1, \dots, x_k > 0$, we have the inequality $\prod_{j=1}^k (1+x_j) > 1 + \sum_{j=1}^k x_j$. Using this inequality with $k \ge 8$, $|z| \ge \frac{2}{k}$, and \eqref{eq:a}, we have
\begin{align*}
\left| (z+1) \cdots (z+k) \right| &> k! \cdot \sqrt{1+ \sum_{j=1}^k \left[ - \frac{2a}{j} + \frac{a^2+b^2}{j^2} \right]} \\
&> k! \cdot \sqrt{1 - \left( \frac{\log k + 1}{k+1} \right)|z|^2 + \frac{3}{2} |z|^2} \\
&> k! \cdot \sqrt{1 + \frac{2|z|}{k} + \frac{|z|^2}{k^2}} \\
&= \left( 1 + \frac{|z|}{k} \right) \cdot k!.
\end{align*}
Therefore, $|(z+1) \cdots (z+k) - k!| \ge |(z+1) \cdots (z+k)| - k! > |z| \cdot (k-1)!$.

\smallskip

\underline{Case 2.} $|z| < \frac{2}{k}$.

We claim that
\begin{equation}\label{eq:case2}
\frac{9999}{10000} |z| \le b \le a+b \le \frac{73}{72} |z| < \frac{73}{36k}.
\end{equation}

The last inequality is immediate from the hypothesis on $|z|$. Note that
\[
a = \frac{|z|^2}{2(k+1)} \le \frac{2}{k} \cdot \frac{|z|}{2(k+1)} = \frac{|z|}{k(k+1)} \le \frac{|z|}{72}.
\]
Therefore, $a+b \le \frac{|z|}{72} + |z| = \frac{73}{72} |z|$. We also have
\[
b = \sqrt{|z|^2 - a^2} = |z| \cdot \sqrt{1 - \frac{|z|^2}{4(k+1)^2}} \ge |z| \cdot \sqrt{1 - \frac{1}{(k(k+1))^2}} \ge \frac{9999}{10000} |z|,
\]
as claimed.

The following useful trigonometric inequalities can be verified by single variable calculus:
\begin{itemize}
\item For all $x \ge 0$,
\[
x-x^2 \le \arctan(x) \le x. \tag{T1}
\]
\item For all $x \ge 0$,
\[
x - \frac{x^3}{3} \le \sin(x). \tag{T2}
\]
\end{itemize}

For each $j = 1,2, \dots, k$, let $ \theta_j := \arg(z+j) \in [0,2\pi)$. Let $\theta = \theta_1 + \cdots + \theta_k$. We have that
\[
(z+1) \cdots (z+k) = | (z+1) \cdots (z+k) | \cdot e^{i \theta}.
\]

Now, we claim that
\[
(\log k) b \le \theta \le (1+ \log k) (a+b) < \frac{\pi}{4}.
\]

Since $k\geq 8$, \eqref{eq:case2} implies that $a+b<1$, from which it follows that
\[
\frac{b}{j-a} \le \frac{a+b}{j}
\]
whenever $j\geq 1$. Then for $j = 1, 2, \dots, k$,
\[
\theta_j = \arctan \left( \frac{b}{j-a} \right) \le \arctan \left( \frac{a+b}{j} \right).
\]
Therefore,
\[
\theta \le \sum_{j=1}^k \arctan \left( \frac{a+b}{j} \right) \overset{\text{(T1)}}\le \sum_{j=1}^k \frac{a+b}{j} \le (1+\log k)(a+b).
\]
On the other hand,
\[
\theta_j = \arctan \left( \frac{b}{j-a} \right) \ge \arctan \left( \frac{b}{j} \right).
\]
Hence,
\[
\theta \ge \sum_{j=1}^k \arctan \left( \frac{b}{j} \right) \overset{\text{(T1)}}\ge \sum_{j=1}^k \left( \frac{b}{j} - \frac{b^2}{j^2} \right) \ge \left( \frac{1}{2} + \log k \right) b - \frac{\pi^2}{6} b^2 \ge (\log k) b.
\]
Since $k\geq 8$, we also have
\[
(1+\log k)(a+b) \le (1+ \log k) \cdot \frac{73}{36 k} < \frac{\pi}{4},
\]
as claimed. We remark that the last inequality does not hold for $k=7$.

The function $x \mapsto x - \frac{x^3}{3}$ is increasing on $\left[ 0, \frac{\pi}{4} \right]$. Thus,
\[
\sin \theta \overset{\text{(T2)}}{\ge} \theta - \frac{\theta^3}{3} \ge (\log k)b - \frac{(\log k)^3 b^3}{3} \ge \left( \log k - \frac{4(\log k)^3}{3k^2} \right) \frac{9999}{10000} |z| \ge |z|.
\]
For the last inequality, we used the fact that $k\geq 8$. Therefore,
\begin{align*}
|(z+1) \cdots (z+k) - k!| &\ge \Im \left( (z+1) \cdots (z+k) - k! \right) \\
&= |(z+1) \cdots (z+k)| \cdot \sin \theta \ge k! \cdot |z| \ge (k-1)! \cdot |z|.
\end{align*}
This finishes the proof of the inequality for $k \ge 8$.

\medskip

Finally, we consider the cases with $k\leq 7$. Since the seven cases for $k$ may all be proved in roughly the same manner, we give a proof for $k=7$ and leave the other cases to the reader.

Let $k=7$ and $w = z + 5$. We have $|w| \ge 5$.
\begin{align*}
&\left| \frac{(z+1)(z+2)(z+3)(z+4)(z+5)(z+6)(z+7) - 7!}{z} \right| \\
&=\left| (z+5)^6 - 2(z+5)^5 - 3(z+5)^4 + 20(z+5)^3 + 44(z+5)^2 + 192(z+5) + 1008 \right|\\
&= \left| w^6 - 2w^5 - 3w^4 + 20w^3 + 44w^2 + 192w + 1008 \right| \\
&\ge |w|^6 - 2|w|^5 - 3|w|^4 - 20|w|^3 - 44|w|^2 - 192|w| - 1008 \ge 1932 > 6!.
\end{align*}

In each of the remaining cases, one may use a similar argument where $w$ is defined as in the following table.

\vspace{2mm}

\begin{center}
\begin{tabular}{c|c}
  $k$ & $w$\\\hline\hline
  $1$ & $z$\\\hline
  $2$ & $z+3$\\\hline
  $3$ & $z+3$\\\hline
  $4$ & $z+4$\\\hline
  $5$ & $z+4$\\\hline
  $6$ & $z+\frac{17}{4}$
\end{tabular}
\end{center}

\medskip

\section{The Polynomial Perturbation Lemma}\label{app:perturbation}
In this section, we prove a lemma on polynomial perturbation. Starting with a certain polynomial with distinct real roots, we obtain a new polynomial by perturbing it in a certain bounded manner. The lemma gives an upper bound on the moduli of the roots of the resulting polynomial. We also note that our result is sharp.

\begin{lemma} {\em (Polynomial Perturbation)} \label{l:pp}
Suppose that $k$ is a positive integer. Let $a_1 < a_2 < \cdots < a_k$ be a strictly increasing sequence of positive integers. Let $g_1, \dots, g_k \ge 0$ satisfy $g_i \le i$ for all $i=1, \dots, k$. Define
\begin{align*}
P(z) &:= (z+a_k) (z+a_{k-1}) \cdots (z+a_1) + g_1 (z+a_k) (z+a_{k-1}) \cdots (z+a_2) \\
&\hphantom{:= } + g_1 g_2 (z+a_k)(z+a_{k-1}) \cdots (z+a_3) + \cdots + g_1 g_2 \cdots g_{k-1} (z+a_k) + g_1 g_2 \cdots g_k.
\end{align*}
If $z$ is a complex root of $P(z)$, then $|z| \le a_k + 1$.
\end{lemma}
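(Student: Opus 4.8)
The plan is to divide through by the leading factor and estimate term by term. Suppose $z$ is a root of $P(z)$ with $|z| > a_k+1$. If $z = -a_i$ for some $i$, then $P(z) = g_1\cdots g_k \ge 0$; this can only vanish if some $g_i = 0$, but then the polynomial effectively truncates and we can reduce $k$, so we may assume $z + a_i \neq 0$ for all $i$. Writing out $P(z) = 0$ and dividing by $(z+a_k)(z+a_{k-1})\cdots(z+a_1)$, we obtain
\[
1 + \frac{g_1}{z+a_1} + \frac{g_1 g_2}{(z+a_1)(z+a_2)} + \cdots + \frac{g_1 g_2 \cdots g_k}{(z+a_1)(z+a_2)\cdots(z+a_k)} = 0.
\]
The idea is then to take absolute values: the first term contributes $1$, so the remaining terms must have total absolute value at least $1$, and I want to show that under the assumption $|z| > a_k + 1$ this is impossible.

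The key estimate is a lower bound on $|z + a_i|$. Since $a_1 < a_2 < \cdots < a_k$ are \emph{positive integers}, we have $a_i \le a_k - (k - i)$, hence $a_k - a_i \le k - i$. Therefore, by the reverse triangle inequality,
\[
|z + a_i| \ge |z + a_k| - (a_k - a_i) \ge |z + a_k| - (k - i) \ge \bigl(|z| - a_k\bigr) + (k-i) + \bigl(|z+a_k| - |z| + a_k - (k-i)\bigr),
\]
but more simply: from $|z| > a_k + 1$ we get $|z + a_k| \ge |z| - a_k > 1$, and then $|z + a_i| \ge |z+a_k| - (k-i) $. I will need to be a little careful here: this naive bound may be nonpositive for small $i$, so instead I will argue that the tail $\sum_{j \ge i}\tfrac{g_1\cdots g_j}{|(z+a_1)\cdots(z+a_j)|}$ can be controlled using $g_j \le j$ and $|z+a_j| \ge |z| - a_j \ge |z| - a_k > 1$ together with a refinement that pairs the factor $g_j \le j$ against $|z + a_{j}|$ which is at least $j - (\text{something})$ when $|z|$ is large. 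A cleaner route: set $w = z + a_k$, so $|w| > 1$, and $z + a_i = w - (a_k - a_i) = w - b_i$ where $0 \le b_i \le k - i$ are distinct nonnegative integers with $b_k = 0$. Then the equation becomes $\sum_{d=0}^{k} \frac{g_1 \cdots g_d}{\prod_{j=1}^{d}(w - b_j)} = 0$ (empty product $= 1$ for $d=0$), and I want to compare this to the model case $g_j = j$, $b_j = k - j$, which is exactly the polynomial $P$ of Lemma~\ref{lem:mero_roots} / Appendix~\ref{app:analytic} after relabeling — whose roots satisfy $|z + 1| \le k$, i.e. $|w| \le k$ in that normalization.

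The main obstacle — and the reason this is stated as a separate lemma rather than folded into the main proof — is making the perturbation argument rigorous: one must show that shrinking each $g_i$ from its extremal value $i$ down toward $0$, and spreading the $b_i$ out to distinct integers, only \emph{helps} (i.e. cannot push a root outside the disk of radius $a_k + 1$). I expect the right tool is a monotonicity / continuity argument: show that $|P(z)|$ on the circle $|z| = a_k + 1$ stays bounded away from $0$ by comparing, term by term, with the extremal polynomial where equality $g_i = i$ holds and the $a_i$ are consecutive integers $a_k - k + 1, \ldots, a_k$; for that extremal polynomial the bound is exactly Lemma~\ref{l:geoball}'s computation (the telescoping identity $1 + \frac{1!}{z+2} + \cdots + \frac{\kappa!}{(z+2)\cdots(z+\kappa+1)} = \frac{(z+1)\cdots(z+\kappa+1) - (\kappa+1)!}{z(z+2)\cdots(z+\kappa+1)}$ already appearing in the proof of Theorem~\ref{thm:main_left}). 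Then one invokes the maximum modulus principle, exactly as in Appendix~\ref{app:analytic}: $1/P$ is holomorphic outside the disk and vanishes at infinity, so $|P|$ attains its minimum on the boundary circle, where the explicit estimate applies. The routine-but-delicate part will be checking that the inequalities $g_i \le i$ propagate correctly through the nested products and that the distinctness of the $a_i$ is used sharply enough; once those estimates are in hand, the contradiction with $P(z) = 0$ follows and we conclude $|z| \le a_k + 1$.
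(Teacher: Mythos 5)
There is a genuine gap. Your proposal correctly sets up the normalization (divide by $(z+a_1)\cdots(z+a_k)$ and try to show the tail has modulus less than $1$) and correctly guesses that one should reduce to the extremal configuration $g_i=i$ with the $a_i$ consecutive, but it defers exactly the parts that constitute the proof. First, the reduction in the $g_i$'s is not a monotonicity statement: the paper handles it by observing that $F(g_1,\dots,g_k)=\bigl|\sum_i g_1\cdots g_i(z+a_{i+1})\cdots(z+a_k)\bigr|$ is convex in each $g_i$, so its maximum over the box $\prod_i[0,i]$ is attained at a vertex with $g_i\in\{0,i\}$; this must be done \emph{before} any triangle inequality, for reasons explained next. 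Second, and more seriously, your fallback plan (bound each term by its modulus using $|z+a_j|>k+1-j$) provably fails in the extremal case: with consecutive $a_i$ and $g_i=i$ it yields $\sum_{i=1}^k\frac{i!}{k(k-1)\cdots(k+1-i)}=\sum_{i=1}^k\binom{k}{i}^{-1}$, whose last summand is already $1$, so the bound exceeds $1$ for every $k\ge 1$. Indeed the lemma is sharp --- for $k$ odd, consecutive $a_i$, and $g_i=i$, the boundary point $-a_k-1$ is an honest root --- so no argument that discards the phases of the $z+a_j$ can close the extremal case. The paper instead uses the exact telescoping identity to rewrite the sum as $\frac{z(z+1)\cdots(z+k)-(k+1)!}{z-1}$ and then runs a delicate two-page estimate (the $\varepsilon,\varepsilon'$ computation with $w=z+k+1$) that keeps track of the argument of the product. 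Your appeal to the maximum modulus principle is also blocked here: since $|P|$ can vanish on the circle $|z|=a_k+1$, knowing that the infimum of $|P|$ over the closed exterior is attained on that circle gives only $|P|\ge 0$, which says nothing.

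Beyond the extremal case, the reduction of the $a_i$'s ("spreading the $b_i$ out to distinct integers only helps") is asserted without proof, and in the paper it occupies Steps II--III: a case analysis on the difference vector $\Delta=(a_k-a_{k-1},\dots,a_2-a_1)$ with six exceptional patterns each requiring its own estimate, plus separate treatment of $k=1,2$ via an Apollonius-circle argument. Also note two smaller inaccuracies: the claim that $P(-a_i)=g_1\cdots g_k$ is false for $i<k$ (several terms survive), though the case is vacuous anyway since $|z|>a_k+1$ forces $z\ne -a_i$; and the extremal sum is not literally the polynomial of Appendix~\ref{app:analytic} after relabeling --- the two are related by the telescoping identity but the inequality needed here, $|(z+1)\cdots(z+k)|>\bigl|\sum_{i=1}^k i!\,(z+i+1)\cdots(z+k)\bigr|$, is not the statement of Lemma~\ref{l:geoball}.
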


We think of the first term $(z+a_k) \cdots (z+a_1)$ as the {\em main} term and the rest as the perturbation. The original roots of the main term are $-a_k, \dots, -a_1$, which all lie inside the closed ball $\{z: |z| \le a_k \}$. The lemma says that the roots of the perturbed polynomial are inside a slightly larger closed ball $\{z: |z| \le a_k + 1\}$.

To show this lemma, we prove the following stronger statement. This strategy is predictable as we have the picture that the main term should dominate the rest.
\begin{claim}
Let $a_1, \dots, a_k, g_1, \dots, g_k$ be as in the above lemma. If $|z| > a_k + 1$, then
\begin{equation}\label{eq:claim}
\left| (z+a_k) \cdots (z+a_1) \right| > \left| \sum_{i=1}^k g_1 \cdots g_i (z+a_{i+1}) \cdots (z+a_k) \right|.
\end{equation}
\end{claim}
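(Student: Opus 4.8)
The plan is to prove the Claim by bounding the perturbation term against the main term factor by factor. First I would divide both sides of \eqref{eq:claim} by $|(z+a_k)(z+a_{k-1})\cdots(z+a_{i+1})|$ inside each summand; that is, I would factor the largest common product out of the perturbation sum. Since $|z|>a_k+1$, every factor satisfies $|z+a_j|\geq |z|-a_j>1$, and more precisely $|z+a_j|>|z|-a_k\geq$ (a useful lower bound). The goal is to show
\[
\left|(z+a_k)\cdots(z+a_1)\right|>\sum_{i=1}^k g_1\cdots g_i\,\left|(z+a_{i+1})\cdots(z+a_k)\right|,
\]
which after dividing through by $\prod_{j=1}^k|z+a_j|$ (all nonzero since $|z|>a_k+1>a_j$) becomes the scalar inequality
\[
1>\sum_{i=1}^k \frac{g_1 g_2\cdots g_i}{|z+a_1|\,|z+a_2|\cdots|z+a_i|}.
\]
Using $g_i\leq i$ and $|z+a_i|\geq |z|-a_i> a_k+1-a_i\geq a_k+1-a_k=1$, but that is too weak; I need $|z+a_i|$ to beat $i$. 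The right bound is $|z+a_i|\geq |z|-a_i>(a_k+1)-a_i$, and since $a_1<a_2<\cdots<a_k$ are strictly increasing positive integers, $a_k-a_i\geq k-i$, hence $|z+a_i|>k-i+1$. Wait — I should instead index so the largest factor is controlled: reindex with $b_j=a_{k+1-j}$ so that the remaining (undivided) factors in term $i$ are the $i$ smallest, and I get denominators $|z+a_1|,\dots,|z+a_i|$ with $|z+a_j|>(a_k+1)-a_j\geq (a_k+1)-(a_k-(k-j))=k-j+1\geq k-i+1$ for $j\leq i$... this needs care about which factors survive.

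The cleaner approach: after factoring, the $i$-th term of the sum has the $i$ factors $(z+a_1),\dots,(z+a_i)$ in its denominator (these are the smallest $a$'s). For $j\in\{1,\dots,i\}$ we have $a_j\leq a_i$ and, since the $a$'s are strictly increasing integers, $a_i\leq a_k-(k-i)$, so $|z+a_j|>(a_k+1)-a_j\geq (a_k+1)-a_i\geq (a_k+1)-(a_k-(k-i))=k-i+1$. Therefore
\[
\frac{g_1g_2\cdots g_i}{|z+a_1|\cdots|z+a_i|}<\frac{1\cdot 2\cdots i}{(k-i+1)(k-i+2)\cdots \text{?}}.
\]
Hmm, the denominator bound $k-i+1$ is uniform over $j$, giving $(k-i+1)^i$ in the denominator, so each term is $<\frac{i!}{(k-i+1)^i}$; then I would need $\sum_{i=1}^k \frac{i!}{(k-i+1)^i}<1$, which is false for small $k$ (e.g. $i=k$ gives $k!/1=k!$). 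So the uniform bound is too lossy. Instead I should keep the $j$-dependence: $|z+a_j|>(a_k+1)-a_j$ and, using strict monotonicity, $a_j\leq a_k-(k-j)$, so $|z+a_j|>(a_k+1)-(a_k-(k-j))=k-j+1$. Thus $|z+a_j|>k-j+1$ for every $j$. Then
\[
\sum_{i=1}^k\frac{g_1\cdots g_i}{|z+a_1|\cdots|z+a_i|}<\sum_{i=1}^k\frac{1\cdot 2\cdots i}{k(k-1)\cdots(k-i+1)}=\sum_{i=1}^k\frac{i!\,(k-i)!}{k!}=\sum_{i=1}^k\frac{1}{\binom{k}{i}}.
\]
Now the key numerical fact I would prove is $\sum_{i=1}^k \binom{k}{i}^{-1}\leq 1$ — but this fails too: for $k=1$ it is $1$, for $k=2$ it is $\tfrac12+\tfrac12=1$, for $k=3$ it is $\tfrac13+\tfrac13+1$... no wait $\binom{3}{3}=1$ so that term is $1$, total $>1$. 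The issue is the $i=k$ term $g_1\cdots g_k/\prod|z+a_j|$: here I have the bound $|z+a_j|>k-j+1$ only, product $=k!$, numerator $\leq k!$, ratio $\leq 1$ — borderline, and the sum exceeds $1$. So I need a strictly better bound on the largest factors.

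The fix, and the step I expect to be the main obstacle, is to exploit that $|z|>a_k+1$ gives $|z+a_1|\geq |z|-a_1> a_k+1-a_1\geq k$ genuinely, and more importantly to not be wasteful on the \emph{last} (largest) factor $|z+a_i|$ when $i$ is large: for the $i=k$ term the denominator is $\prod_{j=1}^k|z+a_j|$ with $|z+a_j|\geq |z|-a_j$; summing logarithms or using the AM–GM style estimate $\prod(|z|-a_j)$ against $\prod g_j=\prod j$ when $|z|>a_k+1$ should give strict inequality because $|z|-a_j\geq (a_k+1)-a_j\geq j+1>j$ — indeed if $a_j=a_k-(k-j)$ is as small as allowed then $|z|-a_j>(a_k+1)-(a_k-(k-j))=j+1$. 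So in fact $|z+a_j|>j+1$ for all $j$ (not $k-j+1$), which is the bound I actually want. Re-doing: $|z+a_j|>j+1$, so $\frac{g_1\cdots g_i}{|z+a_1|\cdots|z+a_i|}<\frac{i!}{2\cdot3\cdots(i+1)}=\frac{i!}{(i+1)!}=\frac{1}{i+1}$. That still sums to $\sum_{i=1}^k \frac1{i+1}$ which diverges. The honest resolution must use that the $g_i\leq i$ bounds cannot all be tight simultaneously alongside the $a_j$ bounds, OR telescoping: I would instead prove by induction on $k$, peeling off the factor $(z+a_1)$, writing $P(z)=(z+a_1)Q(z)+g_1g_2\cdots g_k$ where $Q$ has the same structure with $(a_2,\dots,a_k)$ and $(g_2,\dots,g_k)$ shifted; the inductive hypothesis controls $Q$, and one checks $|z+a_1|\cdot|Q(z)|>g_1\cdots g_k$ using $|z+a_1|>a_k+1-a_1$ and a lower bound on $|Q(z)|$ of the form $\geq\prod(|z|-a_j)-(\text{sum})>0$. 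I would set up this induction carefully, tracking a lower bound $|(z+a_k)\cdots(z+a_{i+1})|-\sum_{\ell>i}g_{i+1}\cdots g_\ell|(z+a_{\ell+1})\cdots(z+a_k)|$ as the inductive quantity, and the main work is verifying the induction step survives the borderline arithmetic — presumably using strict inequalities $g_i<i+\varepsilon$ effectively, or the strictness of $|z|>a_k+1$, to get the needed slack.
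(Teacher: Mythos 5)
Your proposal correctly sets up the normalization $\sum_{i=1}^k \frac{g_1\cdots g_i}{|z+a_1|\cdots|z+a_i|}<1$ and correctly diagnoses that the naive term-by-term triangle-inequality bound yields $\sum_{i=1}^k\binom{k}{i}^{-1}$, which exceeds $1$. But the proposal then stops at the real difficulty without resolving it, and this is a genuine gap rather than a routine detail. The inequality is \emph{sharp}: when $(a_1,\dots,a_k)=(1,\dots,k)$ and $g_i=i$, the point $z=-(a_k+1)$ is an actual root of $P$ for odd $k$, so \emph{no} argument that replaces the perturbation sum by a sum of moduli can possibly succeed in that configuration — one must exploit cancellation among the complex terms. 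The paper's proof supplies three ingredients you are missing: (a) a convexity-in-each-$g_i$ reduction showing the worst case has each $g_i\in\{0,i\}$, hence (after truncation) $g_i=i$ for all $i$ — note that because the quantity being maximized is the modulus of a complex linear combination, monotonicity in $g_i$ is not automatic and convexity is what legitimizes this; (b) a case split on the difference vector $(a_{j+1}-a_j)$, where the triangle-inequality route you tried \emph{does} work whenever some gap exceeds $1$ in the right place, because the extra gap buys a factor like $\frac{k-2}{k+1}$ in front of the binomial sum; and (c) for the tight, consecutive-integer configurations, summing the series in closed form via $\sum_{i=0}^k i!\,(z+i+1)\cdots(z+k)=\frac{z(z+1)\cdots(z+k)-(k+1)!}{z-1}$ and running a two-dimensional estimate (tracking both modulus and argument of $z$ near the circle $|z|=a_k+1$) to extract the needed strict inequality. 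Nothing in your sketch produces this cancellation mechanism.

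Two smaller but substantive errors: the arithmetic $(a_k+1)-(a_k-(k-j))$ equals $k-j+1$, not $j+1$, so your claimed bound $|z+a_j|>j+1$ is false (take $j=k$ and $z$ real just beyond $-(a_k+1)$, where $|z+a_k|$ is barely above $1$); and the proposed recursion $P(z)=(z+a_1)Q(z)+g_1\cdots g_k$ is not the correct decomposition — only the main term contains the factor $(z+a_1)$, so the right identity is $P(z)=(z+a_1)\prod_{j\ge 2}(z+a_j)+g_1\widetilde{P}(z)$ with $\widetilde{P}$ the analogous polynomial in $(a_2,\dots,a_k)$, $(g_2,\dots,g_k)$. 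Even with the decomposition fixed, an induction peeling off one factor at a time still collides with the sharpness of the bound at the consecutive-integer configuration, so the "borderline arithmetic" you defer is precisely where the proof has to change character.
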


We will first assume that $k \ge 3$, and then work on $k=1, 2$ later.

\subsection*{Step I. Reduction of $g_i$'s.}
In the first step, we will reduce the problem to the case in which $g_i = i$ for all $i$. For this purpose, we consider $z$ and $a_1, \dots, a_k$ fixed within this step. Define
\[
F \left(g_1, \dots, g_k \right) := \left| \sum_{i=1}^k g_1 \cdots g_i (z+a_{i+1}) \cdots (z+a_k) \right|.
\]
Note that $F$ is a convex function for each $g_i \in [0,i]$. Therefore,
\begin{align*}
&F \left(g_1, \dots, g_{i-1}, g_i, g_{i+1}, \dots, g_k \right) \\
&\le \max \left\{ F \left( g_1, \dots, g_{i-1}, 0, g_{i+1}, \dots, g_k \right), F \left(g_1, \dots, g_{i-1}, i, g_{i+1}, \dots, g_k \right)\right\}.
\end{align*}
Using the inequality above for all $i \in [k]$, we find that there exist $\widehat{g}_1, \dots, \widehat{g}_k \in \mathbb{R}$ such that
\[
F(g_1, \dots, g_k) \le F \left( \widehat{g}_1, \dots, \widehat{g}_k \right),
\]
where $\widehat{g}_i \in \{0,i\}$ for each $i = 1,2, \dots, k$. If $\widehat{g}_1 = 0$, then $F\left( \widehat{g}_1, \dots, \widehat{g}_k \right) = 0$ and \eqref{eq:claim} follows immediately. Suppose that $\widehat{g}_1 = 1$. Let $k' \in [k]$ be the largest index such that $\widehat{g}_i = i$ for all $1 \le i \le k'$. By the definition of $F$, we note that
\[
F \left( \widehat{g}_1, \dots, \widehat{g}_k \right) = F \left(1, 2, \dots, k', 0, \dots, 0 \right).
\]
It suffices to show
\[
\left| (z+a_{k'}) \cdots (z+a_1) \right| > \left| \sum_{i=1}^{k'} i! (z+a_{i+1}) \cdots (z+a_{k'}) \right|.
\]
Since $\{z: |z| > a_k + 1\} \subseteq \{z: |z| > a_{k'} + 1\}$, it suffices to prove the claim for the case where $k$ is replaced by $k'$ and $g_i = i$ for all $i \in [k']$.

From now on we assume $g_i = i$ for all $i = 1, 2, \dots, k$.

\medskip

\subsection*{Step II. Reduction of $a_i$'s.}
We consider the vector of first differences of $a_i$'s
\[
\Delta := \left( a_k - a_{k-1}, \dots, a_3 - a_2, a_2 - a_1 \right) \in \left( \mathbb{Z}_{>0} \right)^{k-1}.
\]
Define
\[
\mathcal{A} := \left\{ (1^{k-1}), (1^{k-2}, 2), (1^{k-2}, 3), (1^{k-2}, 4), (1^{k-3}, 2, 1), (1^{k-3}, 2, 2) \right\}.
\]
Here, $1^m$ denotes $m$ copies of $1$'s for each $m \ge 0$. The goal of this step is to prove the claim when $\Delta \notin \mathcal{A}$.

Assume $\Delta \notin \mathcal{A}$. Let $z \in \mathbb{C}$ such that $|z| > a_k + 1$. There are two cases.

\underline{Case 1.} Suppose that there is some index $j$ such that $4 \le j \le k$ and $a_j - a_{j-1} \ge 2$. In particular, we must have $k \ge 4$ for this case to happen. By the triangle inequality, 
\begin{equation}\label{eq:apBineq}
|z+a_i| > k+1 - i  
\end{equation}
holds for $i = 1, 2, \dots, k$. In this case, we obtain better inequalities for $i=1,2,3$: $|z+a_1| > k+1$, $|z+a_2| > k$, and $|z+a_3| > k-1$.

Recall that we want to show that
\[
|(z+a_k) \cdots (z+a_1)| > \left| \sum_{i=1}^k i! (z+a_{i+1}) \cdots (z+a_k) \right|.
\]
That is,
\[
1 > \left| \sum_{i=1}^k \frac{i!}{(z+a_1) \cdots (z+a_i)} \right|.
\]
By the triangle inequality, it suffices to prove that
\[
\sum_{i=1}^k \frac{i!}{|z+a_1| \cdots |z+a_i|} < 1.
\]
Using the bounds for $|z+a_i|$ in \eqref{eq:apBineq}, we obtain
\begin{align*}
&\sum_{i=1}^k \frac{i!}{|z+a_1| \cdots |z+a_i|} \\
&< \frac{1!}{k+1} + \frac{2!}{(k+1)k} + \frac{k-2}{k+1} \left( \frac{3!}{k(k-1)(k-2)} + \frac{4!}{k(k-1)(k-2)(k-3)} + \cdots + \frac{k!}{k!} \right) \\
&= \frac{2}{k^2 - 1} + \frac{k-2}{k+1} \sum_{i=1}^k \binom{k}{i}^{-1} < 1,
\end{align*}
as desired. To see why the last inequality holds, one may use the bound
\[
\sum_{i=1}^k \binom{k}{i}^{-1} < 1 + \frac{3}{k},
\]
for $k \ge 8$, and check the cases where $4 \le k \le 7$ by hand.

\underline{Case 2.} Suppose that $a_i - a_{i-1} = 1$ for all $i \ge 4$. In this case, because $\Delta \notin \mathcal{A}$, we have
\[
(a_3 - a_2, a_2 - a_1) \notin \left\{ (1,1), (1,2), (1,3), (1,4), (2,1), (2,2) \right\}.
\]
Note that $A := |z+a_1| > k-2 + (a_3 - a_1)$ and $B := |z+a_2| > k-2 + (a_3 - a_2)$. By some easy casework, we obtain the bound $(A-1)B > k^2+2k-3$. With the same argument as in the case above, we want to show that
\[
\sum_{i=1}^k \frac{i!}{|z+a_1| \cdots |z+a_i|} < 1.
\]
Observe that
\[
\sum_{i=1}^k \frac{i!}{|z+a_1| \cdots |z+a_i|} < \frac{1}{A} + \frac{k(k-1)}{AB} \sum_{i=2}^k \binom{k}{i}^{-1}.
\]
Thus, it suffices to show
\[
k(k-1) \sum_{i=2}^k \binom{k}{i}^{-1} \le (A-1)B.
\]
Using $(A-1)B > k^2+2k-3$ and $\sum_{i=2}^k \binom{k}{i}^{-1} \le 1 + \frac{21}{10 k}$, we obtain the desired inequality.

\medskip

\subsection*{Step III. The case in which $\Delta \in \mathcal{A}$.}
Still assuming that $k \ge 3$ is a fixed integer, we work on the six remaining cases of $\Delta$.

\underline{Case 1.} $a_3 - a_2 = a_2 - a_1 = 1$. In this case, $(a_k, \dots, a_1) = (k + \alpha, k-1 + \alpha, \dots, 1 + \alpha)$ for some integer $\alpha \ge 0$. Note that if we prove the desired claim for the case when $\alpha = 0$, all other cases when $\alpha > 0$ will follow. Thus, we may now assume $(a_k, \dots, a_1) = (k, k-1, \dots, 1)$. The inequality we want to prove becomes
\begin{equation} \label{eq:sharp}
|(z+1) (z+2) \cdots (z+k)| > \left| \sum_{i=1}^k i! (z+i+1)(z+i+2) \cdots (z+k) \right|.
\end{equation}
By induction, we can show that
\[
\sum_{i=0}^k i! (z+i+1)(z+i+2) \cdots (z+k) = \frac{z(z+1)(z+2) \cdots (z+k) - (k+1)!}{z-1}.
\]
Therefore, the inequality (\ref{eq:sharp}) is equivalent to
\[
\left| \frac{1}{z-1} - \frac{(k+1)!}{(z-1)(z+1)(z+2) \cdots (z+k)} \right| < 1.
\]
By the triangle inequality, it suffices to show that
\begin{equation} \label{eq:sharp2}
\frac{1}{|z-1|} \left( 1 + \frac{(k+1)!}{|z+1| |z+2| \cdots |z+k|} \right) < 1.
\end{equation}
Moreover, by the triangle inequality, we know that $|z+j| \ge |z|-j > k+1-j$, for $j = 1, 2, \dots, k$, and also $|z-1| \ge |z| - 1 > k$.

If $|z-1| \ge k+2$, it is easy to see that the bound (\ref{eq:sharp2}) holds. Assume $|z-1| < k+2$. Let $w = z + k + 1$. Write $w = a + bi$, where $a, b \in \mathbb{R}$. From $|z-1| < k+2$, we have $(a-(k+2))^2 + b^2 < (k+2)^2$, and so
\[
a > \frac{a^2+b^2}{2(k+2)}.
\]
In particular, we have $a > 0$.

From $|z| > k+1$, we have $(a-(k+1))^2 + b^2 > (k+1)^2$, and so
\begin{equation} \label{eq:(a^2+b^2)/(2a)}
\frac{a^2+b^2}{2a} > k + 1.
\end{equation}
The inequality (\ref{eq:sharp2}) is equivalent to
\[
|z+1| |z+2| \cdots |z+k| + (k+1)! < |z-1| |z+1| |z+2| \cdots |z+k|,
\]
which is
\begin{align}
&\left| 1 - \frac{w}{k} \right| \left| 1 - \frac{w}{k-1} \right| \cdots | 1 - w | + (k+1) \notag \\
&< (k+2) \left| 1 - \frac{w}{k+2} \right| \left| 1 - \frac{w}{k} \right| \left| 1 - \frac{w}{k-1} \right| \cdots | 1 - w |. \label{eq:sharp3}
\end{align}
Note that for $j = 1, 2, \dots, k$, we have $\left| 1 - \frac{w}{j} \right| > 1$, while $\left| 1 - \frac{w}{k+2} \right| < 1$. Therefore, we can write
\[
\prod_{j=1}^k \left| 1 - \frac{w}{j} \right| = 1 + \varepsilon,
\]
and $\left| 1 - \frac{w}{k+2} \right| = 1 - \varepsilon'$, where $\varepsilon, \varepsilon' > 0$. Now, (\ref{eq:sharp3}) is equivalent to
\begin{equation} \label{eq:sharp4}
\varepsilon' < \frac{k+1}{k+2} \cdot \frac{\varepsilon}{1+\varepsilon}.
\end{equation}
From $|z-1| > k$, we know $\varepsilon' < \frac{2}{k+2}$. If $\varepsilon \ge \frac{2}{k-1}$, then (\ref{eq:sharp4}) is clear. Assume $\varepsilon < \frac{2}{k-1}$. Thus, $\frac{\varepsilon}{1+\varepsilon} > \frac{k-1}{k+1} \cdot \varepsilon$. To show (\ref{eq:sharp4}), it suffices to show that
\begin{equation} \label{eq:sharp5}
\frac{\varepsilon}{\varepsilon'} \ge \frac{k+2}{k-1}.
\end{equation}
Using a trick similar to one in the proof of Lemma A.2, we find
\[
(1+\varepsilon)^2 = \prod_{j=1}^k \left( 1 - \frac{2a}{j} + \frac{a^2+b^2}{j^2} \right) \ge 1 + \sum_{j=1}^k \left( - \frac{2a}{j} + \frac{a^2+b^2}{j^2} \right).
\]
This shows
\begin{equation} \label{eq:eps(2+eps)}
\varepsilon(2+\varepsilon) \ge \sum_{j=1}^k \left( - \frac{2a}{j} + \frac{a^2+b^2}{j^2} \right).
\end{equation}
On the other hand, since $(1-\varepsilon')^2 = \left| 1 - \frac{w}{k+2} \right|^2 = \left( 1 - \frac{a}{k+2} \right)^2 + \left( \frac{b}{k+2} \right)^2$, we have
\begin{equation} \label{eq:eps'(2-eps')}
\varepsilon'(2-\varepsilon') = \frac{2a}{k+2} - \frac{a^2+b^2}{(k+2)^2}.
\end{equation}
Therefore, we have
\begin{align*}
&\frac{\varepsilon(2+\varepsilon) - 5 \varepsilon'(2-\varepsilon')}{2a} \\
&\overset{\text{(\ref{eq:eps(2+eps)}), (\ref{eq:eps'(2-eps')})}}\ge \left( 1 + \frac{1}{2^2} + \frac{1}{3^2} + \cdots + \frac{1}{k^2} + \frac{5}{(k+2)^2} \right) \frac{a^2+b^2}{2a} - \left( 1 + \frac{1}{2} + \frac{1}{3} + \cdots + \frac{1}{k} + \frac{5}{k+2} \right) \\
&\overset{\text{(\ref{eq:(a^2+b^2)/(2a)})}}> \left( 1 + \frac{1}{2^2} + \frac{1}{3^2} + \cdots + \frac{1}{k^2} + \frac{5}{(k+2)^2} \right) (k+1) - \left( 1 + \frac{1}{2} + \frac{1}{3} + \cdots + \frac{1}{k} + \frac{5}{k+2} \right) > 0.
\end{align*}
This shows that $\varepsilon(2+\varepsilon) > 5 \varepsilon'(2-\varepsilon')$, and thus
\[
\frac{\varepsilon}{\varepsilon'} > 5 \cdot \frac{2-\varepsilon'}{2+\varepsilon} > 5 \cdot \frac{2 - \frac{2}{k+2}}{2+\frac{2}{k-1}} = 5 \cdot \frac{(k+1)(k-1)}{k(k+2)} > \frac{k+2}{k-1}.
\]
It is easy to see that the last inequality $5 \cdot \frac{(k+1)(k-1)}{k(k+2)} > \frac{k+2}{k-1}$ holds, for $k \ge 3$. This proves (\ref{eq:sharp5}) and we have finished the proof for this case.

\underline{Case 2.} $a_3 - a_2 = 1$ and $d := a_2 - a_1 \in \{2,3,4\}$. Write
\[
(a_k, \dots, a_1) = (u+k+1, u+k, \dots, u+3, u+3-d),
\]
where $u \ge 0$ is an integer. The inequality we want to prove becomes
\begin{align*}
&|(z+u+k+1)(z+u+k) \cdots (z+u+3)(z+u+3-d)| \\
&> \left| \sum_{i=1}^k i! (z+u+k+1) (z+u+k) \cdots (z+u+i+2) \right|.
\end{align*}
Note that
\begin{align*}
&\sum_{i=1}^k i! (z+u+k+1) (z+u+k) \cdots (z+u+i+2) \\
&= \frac{(z+u+k+1)(z+u+k) \cdots (z+u+2) - (k+1)!}{z+u}.
\end{align*}
Therefore, it suffices to show that
\begin{equation} \label{eq:1_1234}
|z+u| > \left| 1 + \frac{d-1}{z+u+3-d} - \frac{(k+1)!}{(z+u+k+1) \cdots (z+u+3)(z+u+3-d)} \right|.
\end{equation}
Recall that $|z| > a_k + 1 = u+k+2$. Therefore, by the triangle inequality, we have
\begin{align*}
RHS_{\text{(\ref{eq:1_1234})}} &\le 1 + \frac{d-1}{|z+u+3-d|} + \frac{(k+1)!}{|z+u+k+1| \cdots |z+u+3| |z+u+3-d|} \\
&< 1 + \frac{d-1}{k+d-1} + \frac{(k+1)!}{(k-1)!(k+d-1)} \\
&= 1 + \frac{d-1}{k+d-1} + \frac{k(k+1)}{k+d-1} \\
&\le k+2 < |z+u| = LHS_{\text{(\ref{eq:1_1234})}}.
\end{align*}

\underline{Case 3.} $a_3 - a_2 = 2$ and $d := a_2 - a_1 \in \{1,2\}$. Write
\[
(a_k, \dots, a_1) = (u+k+1, u+k, \dots, u+4, u+2, u+2-d),
\]
where $u \ge 0$ is an integer. The inequality we want to prove becomes
\begin{align*}
&|(z+u+k+1)(z+u+k) \cdots (z+u+4)(z+u+2)(z+u+2-d)| \\
&> \Bigg| 1! (z+u+k+1)(z+u+k) \cdots (z+u+4)(z+u+2) \\
&\hphantom{> \Bigg|}+ \sum_{i=2}^k i! (z+u+k+1)(z+u+k) \cdots (z+u+i+2) \Bigg|.
\end{align*}
Note that
\begin{align*}
&1! (z+u+k+1)(z+u+k) \cdots (z+u+4)(z+u+2) \\
&+ \sum_{i=2}^k i! (z+u+k+1)(z+u+k) \cdots (z+u+i+2) \\
&= \frac{(z+u+k+1) (z+u+k) \cdots (z+u+4) \left( (z+u)^2 + 4(z+u) + 6 \right) - (k+1)!}{z+u}.
\end{align*}
Therefore, it suffices to show that
\begin{align}
|z+u| &> \Bigg| 1 + \frac{d}{z+u+2-d} + \frac{2}{(z+u+2)(z+u+2-d)} \label{eq:2_12} \\
&\hphantom{> \Bigg|} - \frac{(k+1)!}{(z+u+k+1)(z+u+k) \cdots (z+u+4)(z+u+2)(z+u+2-d)} \Bigg|. \notag
\end{align}
Again, by the triangle inequality, we have
\begin{align*}
RHS_{\text{(\ref{eq:2_12})}} &\le 1 + \frac{d}{|z+u+2-d|} + \frac{2}{|z+u+2| |z+u+2-d|} \\
&\hphantom{\le} + \frac{(k+1)!}{|(z+u+k+1)(z+u+k) \cdots (z+u+4)(z+u+2)(z+u+2-d)|} \\
&< 1 + \frac{d}{k+d} + \frac{2}{k(k+d)} + \frac{(k+1)!}{(k-2)!k(k+d)} \\
&= 1 + \frac{2}{k(k+d)} + \frac{k^2+d-1}{k+d} \\
&\le k+2 < |z+u| = LHS_{\text{(\ref{eq:2_12})}}.
\end{align*}

This finishes Step III.

\medskip

\subsection*{Step IV. Small $k$.}
Finally, we work with the cases $k=1,2$.

\fbox{$k=1$} Suppose $|z| > a_1 + 1$. Then, $|z+a_1| \ge |z| - a_1 > 1 \ge |g_1|$.

\fbox{$k=2$} For this case, we prove one little lemma.

\begin{lemma}
Let $u,v \in \mathbb{R}_{\ge 0}$ such that $u \ge v+1$. For any $z \in \mathbb{C}$ with $|z| > u + 1$, we have
\[
|z+u| |z+v| > |z+u+2|.
\]
\end{lemma}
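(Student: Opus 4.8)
The plan is to square everything and reduce to an elementary algebraic estimate. Set $\zeta := z+u$ and $\delta := u-v$; by hypothesis $\delta \ge 1$. First observe that $\zeta \ne 0$, since $\zeta = 0$ would force $|z| = u < u+1$; hence $\rho := |z+u|^2 = |\zeta|^2 > 0$. Write $x := \Re(\zeta) = \Re(z) + u$. I would extract two consequences of the hypothesis $|z| > u+1$: expanding $|\zeta - u|^2 > (u+1)^2$ (valid because $\zeta - u = z$) gives
\[ \rho > 2u(x+1) + 1, \]
and the triangle inequality gives $|z+v| \ge |z| - v > (u+1) - v = \delta + 1 \ge 2$, so $|z+v|^2 > 4$.

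Next I would rewrite the target inequality in squared form. Since $|z+u+2|^2 = |\zeta+2|^2 = \rho + 4x + 4 = \rho + 4(x+1)$, it suffices to prove
\[ \rho \cdot |z+v|^2 > \rho + 4(x+1). \]
Because $|z+v|^2 > 4$ and $\rho > 0$, the left-hand side exceeds $4\rho$, so it is enough to show $4\rho \ge \rho + 4(x+1)$, that is, $3\rho \ge 4(x+1)$.

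To close this out I would split into cases according to the sign of $x+1$. If $x + 1 < 0$ then $4(x+1) < 0 < 3\rho$ and we are done. If $x+1 \ge 0$, then using $u \ge 1$ in the bound $\rho > 2u(x+1)+1$ gives $\rho > 2(x+1) + 1$, hence $3\rho > 6(x+1) + 3 > 4(x+1)$. In either case $3\rho > 4(x+1)$, completing the chain
\[ |z+u|^2\,|z+v|^2 > 4\rho > \rho + 4(x+1) = |z+u+2|^2, \]
and taking square roots finishes the proof.

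I do not expect a genuine obstacle here; the proof is short and entirely elementary. The two places that need a moment's care are: checking $\zeta \ne 0$ (so that $\rho > 0$, which is what lets us pass from $|z+v|^2 > 4$ to $\rho\,|z+v|^2 > 4\rho$), and remembering that the hypothesis $u \ge v+1$ is used twice — once to get $\delta \ge 1$, hence $|z+v| \ge 2$, and once through the resulting bound $u \ge 1$ in the final estimate.
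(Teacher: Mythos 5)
Your proof is correct, and it is essentially the paper's argument in algebraic clothing: both reduce to the chain $|z+u|\,|z+v| > 2\,|z+u| > |z+u+2|$, where $|z+v|>2$ comes from the triangle inequality and $u\ge v+1$, and your inequality $3\rho \ge 4(x+1)$ is precisely the condition that $z+u$ lies outside the Apollonius disk the paper invokes for $2|\zeta+u|\le|\zeta+u+2|$. The only difference is that you verify that step by expanding moduli squared rather than by citing the classical geometry of the Apollonius circle.
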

\begin{proof}
By the triangle inequality, we have $|z+v| > (u+1)-v \ge 2$. By classical geometry, the locus of all points $\zeta \in \mathbb{C}$ satisfying
\[
2 \, |\zeta+u| \le |\zeta+u+2|
\]
is the closed disk enclosed by the circle of Apollonius centered at $-u + \frac{2}{3}$ of radius $\frac{4}{3}$. Since the whole of this disk lies inside $\{\zeta: |\zeta| \le u+1\}$, we finish the proof.
\end{proof}

Applying the lemma for $(u,v) = (a_2, a_1)$, we find that for $z \in \mathbb{C}$ such that $|z| > a_2 + 1$,
\[
|z+a_2| |z+a_1| > |z+a_2+2|.
\]
Since $|z+a_1| > 2$, we also have $|z+a_2| |z+a_1| > |z+a_2|$. Therefore,
\[
|z+a_2| |z+a_1| > \max \{|z+a_2+2|,|z+a_2| \} \ge |z+a_2+g_2| \ge |g_1(z+a_2) + g_1g_2| ,
\]
by the convexity argument as we did earlier. This finishes the proof.

This also concludes our proof of the Polynomial Perturbation Lemma.

\medskip

It is worth noting that the Polynomial Perturbation Lemma is sharp. When $k$ is odd, $\Delta = (1^{k-1})$, and $g_i = i$ for all $i \in [k]$, observe that $- a_k - 1$ is a root of $P(z)$. This implies that the upper bound on the moduli of the roots of the perturbed polynomial can indeed be attained.

\bigskip
\end{document}